\documentclass[12pt]{amsart}
 
\usepackage{fourier} 
\usepackage{dsfont} 
\usepackage{fontenc}
\usepackage[margin=1.0in]{geometry}

\usepackage{amssymb} 
\usepackage[utf8]{inputenc} 
\usepackage[english]{babel} 

\numberwithin{equation}{section}		

\theoremstyle{plain}
\newtheorem{thm}{Theorem}[section]		
\newtheorem{lem}[thm]{Lemma} 
\newtheorem{cor}[thm]{Corollary}

\theoremstyle{definition}

\theoremstyle{remark} 
 
\newtheorem{que}{Question}

\DeclareMathOperator{\BMOA}{BMOA} \DeclareMathOperator{\VMOA}{VMOA} \DeclareMathOperator{\BMO}{BMO}  \DeclareMathOperator{\mre}{Re}

\newcommand{\T}{\mathbb T}

\begin{document} 
\title{Volterra operators on Hardy spaces of Dirichlet series} 
\date{\today} 

\author{Ole Fredrik Brevig} \address{Department of Mathematical Sciences, Norwegian University of Science and Technology (NTNU), NO-7491 Trondheim, Norway} \email{ole.brevig@math.ntnu.no}

\author{Karl-Mikael Perfekt} \address{Department of Mathematical Sciences, Norwegian University of Science and Technology (NTNU), NO-7491 Trondheim, Norway} \email{kperfekt@utk.edu}
\curraddr{Department of Mathematics, The University of Tennessee, Knoxville, TN 37996, USA}

\author{Kristian Seip} \address{Department of Mathematical Sciences, Norwegian University of Science and Technology (NTNU), NO-7491 Trondheim, Norway} \email{seip@math.ntnu.no}

\thanks{The first and third author are supported by Grant 227768 of the Research Council of Norway.}

\subjclass[2010]{Primary 31B10. Secondary 30H10, 30B50.}

\begin{abstract}
For a Dirichlet series symbol $g(s) = \sum_{n \geq 1} b_n n^{-s}$,  the associated Volterra operator $\mathbf{T}_g$ acting on a Dirichlet series $f(s)=\sum_{n\ge 1} a_n n^{-s}$ is defined by the integral $f\mapsto -\int_{s}^{+\infty} f(w)g'(w)\,dw$. We show that $\mathbf{T}_g$ is a bounded operator on the Hardy space $\mathcal{H}^p$ of Dirichlet series with $0 < p < \infty$ if and only if the symbol $g$ satisfies a Carleson measure condition. When appropriately restricted to one complex variable, our condition coincides with the standard Carleson measure characterization of ${\operatorname{BMOA}}(\mathbb{D})$.  A further analogy with classical ${\operatorname{BMO}}$ is that $\exp(c|g|)$ is integrable (on the infinite polytorus) for some $c > 0$ whenever $\mathbf{T}_g$ is bounded. In particular, such $g$ belong to $\mathcal{H}^p$ for every $p < \infty$. We relate the boundedness of $\mathbf{T}_g$ to several other ${\operatorname{BMO}}$ type spaces: ${\operatorname{BMOA}}$ in half-planes, the dual of $\mathcal{H}^1$, and the space of symbols of bounded Hankel forms. Moreover, we study symbols whose coefficients enjoy a multiplicative structure and obtain coefficient estimates for $m$-homogeneous symbols as well as for general symbols. Finally, we consider the action of $\mathbf{T}_g$ on reproducing kernels for appropriate sequences of subspaces of $\mathcal{H}^2$. Our proofs employ function and operator theoretic techniques in one and several variables;  a variety of number theoretic arguments are used throughout the paper in our study of special classes of symbols $g$. \end{abstract}

\maketitle 

\section{Introduction} \label{sec:intro}
By a result of Pommerenke \cite{Pomm77}, the Volterra operator associated with an analytic function $g$ on the unit disc $\mathbb{D}$, defined by the formula
\begin{equation} \label{eq:Tgdisc}
	T_g f(z): = \int_0^z f(w)g'(w)\,dw, \qquad z \in \mathbb{D},
\end{equation}
is a bounded operator on the Hardy space $H^2(\mathbb{D})$ if and only if $g$ belongs to the analytic space of bounded mean oscillation $\BMOA(\mathbb{D})$. In view of the factorization $H^2 \cdot H^2 = H^1$ and C. Fefferman's famous duality theorem, according to which $\BMOA(\mathbb{D})$ is the dual of $H^1(\mathbb{D})$, it follows that $T_g$ is bounded if and only if the corresponding Hankel form $H_g$ is bounded, where
\[H_g(f,h) := \int_{\mathbb{T}}f(z) h(z)\overline{g(z)}\,dm_1(z), \qquad f, h \in H^2(\mathbb{D}).\]
In recent years, it has become known how to give a direct proof of the equivalence of the boundedness of  $T_g$ and $H_g$  \cite{AP12},  with no mention of bounded mean oscillation ($\BMO$) or Carleson measures, relying instead on the square function characterization of $H^1$ to show that $T_g f$ is in $H^1(\mathbb{D})$ whenever $f$ and $g$ are  in $H^2(\mathbb{D})$. Although the systematic study of $T_g$ was conducted much later than that of the Hankel form $H_g$ (see \cite{AC01, AS95}), one could now, based on this insight, easily imagine an exposition of the one variable Hardy space theory which considers the boundedness of Volterra operators \emph{before} $\BMOA$ and Hankel operators. One advantage would then be that the John--Nirenberg inequality, by Pommerenke's trick \cite{Pomm77}, has an elementary proof for functions $g$ such that $T_g$ is bounded.

This conception of Volterra operators, as objects of primary interest for understanding $\BMO$, underlies the present investigation of such operators on Hardy spaces of Dirichlet series $\mathcal{H}^p$ with $0 < p < \infty$.  The precise definition of these spaces will be given in the next section; suffice it to say at this point that every Dirichlet series $f(s) = \sum_{n\geq1} a_n n^{-s}$ in $\mathcal{H}^p$ defines an analytic function for $\mre s > 1/2$, and that $\mathcal{H}^p$ can be identified with the Hardy space $H^p(\mathbb{D}^\infty)$ of the countably infinite polydisc $\mathbb{D}^\infty$, through the Bohr lift. For a Dirichlet series symbol $g(s) = \sum_{n \geq 1} b_n n^{-s}$, we consider the Volterra operator $\mathbf{T}_g$ defined by
\begin{equation}
	\label{eq:vintop} \mathbf{T}_gf(s) := -\int_{s}^{+\infty} f(w)g'(w)\,dw, \qquad \mre s>1/2. 
\end{equation}
We denote the space of symbols $g$ such that $\mathbf{T}_g : \mathcal{H}^p \to \mathcal{H}^p$ is bounded by $\mathcal{X}_p$. The index $p=2$ is special, and we frequently write $\mathcal{X}$ instead of $\mathcal{X}_2$. 

A general question of interest in the theory of Hardy spaces of Dirichlet series is to reveal how the different roles and interpretations of $\BMO$  manifest themselves in this infinite-dimensional setting. The space of symbols generating bounded Hankel forms has been shown to be significantly larger than $(\mathcal{H}^1)^\ast$ \cite{OCS12}, and the space $(\mathcal{H}^1)^\ast$ itself also lacks many of the familiar features from the finite-dimensional setting. For instance, a function $f$ in $(\mathcal{H}^1)^\ast$ does not always belong to $\mathcal{H}^p$ for every $p<\infty$ \cite{MS11}. By Pommerenke's trick, however, it is almost immediate that the corresponding inclusion does hold for the space $\mathcal{X}$, i.e.,
\[
\mathcal{X} \subset \bigcap_{0 < p < \infty} \mathcal{H}^p.
\]
Furthermore, $(\mathcal{H}^1)^\ast$ is notoriously difficult to deal with, in part owing to the fact that $H^p(\mathbb{D}^\infty)$, viewed as a subspace of $L^p(\mathbb{T}^\infty)$, is not complemented when $p\neq2$. We shall find that the space $\mathcal{X}$ is significantly easier to manage.
 
One of our main results is that the spaces $\mathcal{X}_p$ can be characterized by a Carleson measure condition, in analogy with what we have in the classical one variable theory. In our context, the Carleson measure associated with the symbol $g$ will live on the product of $\mathbb{T}^\infty$ and a half-line. Again deferring precise definitions to the next section, we mention that this result takes the following form: The symbol $g$ belongs to $\mathcal{X}_p$ if and only if there exists a constant $C$ (depending on $g$ and $p$) such that
\[ \int_{\mathbb{T}^\infty}\int_0^\infty |f_\chi(\sigma)|^p |g'_\chi(\sigma)|^2\sigma\,d\sigma dm_\infty(\chi) \leq C \|f\|_{\mathcal{H}^p}^p \]
holds for every $f$ in $\mathcal{H}^p$. Here $m_\infty$ denotes Haar measure on $\T^\infty$, while $\chi$ is a character on $\T^\infty$ and $f_\chi(s):=\sum_{n\geq1} a_n \chi(n) n^{-s}$ for the Dirichlet series $f(s)=\sum_{n\geq1} a_n  n^{-s}$. This result, proved  in Section~\ref{sec:embed}, is based on an adaption to our setting of an ingenious argument from a recent paper of Pau \cite{Pau13}. Our Carleson measure condition gives us the opportunity to study non-trivial Carleson embeddings on the polydisc $\mathbb{D}^\infty$, see Sections~\ref{sec:embedsuff} and \ref{sec:embedlinear}. Our understanding is incomplete, but some of the questions asked are more tractable than the important embedding problem of $\mathcal{H}^p$ (see \cite[Sec.~3]{SS09}) while still being of a similar character. In the classical setting, the description in terms of Carleson measures shows that $T_g$ is bounded on $H^p(\mathbb{D})$ if and only if it is bounded on $H^2(\mathbb{D})$. We will see that our Carleson measure characterization implies that if $g$ is in $\mathcal{X}_p$, then $g$ is in $\mathcal{X}_{kp}$ for every positive integer $k$. As is typical in this setting, we have not been able to do better than this for a general symbol $g$, and the following interesting problem remains open:

\begin{que} \label{que:Hp}
	Is $\mathbf{T}_g$ bounded on $\mathcal{H}^2$ if and only if it is bounded on $\mathcal{H}^p$ for every $p<  \infty$?
\end{que}
We are able to give an affirmative answer to this question only in the case when $g$ is a linear symbol, i.e., when $g$ has non-zero coefficients only at the primes $p_j$ so that $g(s) = \sum_{j\geq1} a_{j} p_j^{-s}$.

Before proceeding to give a closer description of our results, we would like to mention another open problem related to Question~1.  In Section~\ref{sec:hankel}, we will observe that if $\mathbf{T}_g : \mathcal{H}^2 \to \mathcal{H}^2$ is bounded, then the corresponding multiplicative Hankel form is bounded. Furthermore, we will show that if $\mathbf{T}_g : \mathcal{H}^1 \to \mathcal{H}^1$ is bounded, then $g$ is in $(\mathcal{H}^1)^\ast$. Hence, if the answer to Question~\ref{que:Hp} is positive, then so is the answer to the following.
\begin{que} \label{que:XH1}
	Do we have $\mathcal{X}_2 \subset (\mathcal{H}^1)^\ast$?
\end{que}
The reverse inclusion is easily shown to be false. In fact, it is not even true when formulated for the finite-dimensional polydisc $\mathbb{D}^2$ (see Theorem~\ref{thm:finitecase}).

To give appropriate background and motivation for our general result about Carleson measures, we have chosen to begin by exploring in some detail  the distinguished space $\mathcal{X}_2$ and its many interesting facets. This will allow us to exhibit the ubiquitous presence of number theoretic arguments in our subject, which is a consequence of our operators $\mathbf{T}_g$ being defined in terms of integrals on the half-plane 
$\mre s>1/2$.  Roughly speaking, if trying to understand $\mathbf{T}_g$ at the level of the coefficients of $\textbf{T}_g f$, one has to investigate the interplay between the number of divisors $d(n)$ of an integer $n$ and its logarithm, $\log n$. One may also analyze symbols of number theoretic interest in terms of their function theoretic properties. In fact, our first interesting example of a bounded Volterra operator $\mathbf{T}_g : \mathcal{H}^p \to \mathcal{H}^p$, will be established by the result, shown in Section~\ref{sec:prelim}, that the primitive of the Riemann zeta function,
\[g(s) = -\int(\zeta(s+1)-1)\,ds = \sum_{n=2}^\infty \frac{1}{n\log{n}} n^{-s},\]
is of bounded mean oscillation on the line $\mre s = 0$. Such a $\BMO$ condition easily implies that $g$ is in $ \mathcal{X}_2$, and also that $g$ is in $\mathcal{X}_p$ for $0 < p < \infty$, once our Carleson measure condition is in place.

To close this introduction, we now describe briefly the contents of the six subsequent sections of this paper. We begin in Section~\ref{sec:prelim} by introducing the Hardy spaces $\mathcal{H}^p$ and start from the preliminary result that
$\mathcal{H}^\infty \subset \mathcal{X} \subset \bigcap_{0<p<\infty} \mathcal{H}^p$. In our setting, there is a considerable gap between  $\mathcal{H}^\infty$ and $\bigcap_{0<p<\infty} \mathcal{H}^p$, as for instance functions in $\mathcal{H}^\infty$  are bounded analytic functions in the half-plane $\mre s>0$, while functions in $\bigcap_{0<p<\infty} \mathcal{H}^p$  in general will be analytic in the smaller half-plane $\mre s>1/2$. In Section~\ref{sec:prelim}, the main point is to demonstrate how $\mathcal{X}$ can be thought of as a space of $\BMO$ functions in the classical sense. Using the notation $\mathbb{C}_\theta$ for the half-plane $\{s\,:\,\mre(s)>\theta\}$ and $\mathcal{D}$ for the class of functions expressible as a Dirichlet series in some half-plane $\mathbb{C}_\theta$, we prove that
\[ \BMOA(\mathbb{C}_0) \cap \mathcal{D} \subset \mathcal{X} \subset \BMOA(\mathbb{C}_{1/2}), \] and we also show that $e^{c|g|}$ is integrable for some positive constant $c$ whenever $g$ is in $\mathcal{X}$. 

Section~\ref{sec:multip} and Section~\ref{sec:homogen} investigate properties of $\mathcal{X}$ with no counterparts in the classical theory. After showing that the primitive of $\zeta(s+\alpha)-1$ is in $\mathcal{X}$ if and only $\alpha\ge 1$, we make in Section~\ref{sec:multip} a finer analysis by identifying and studying a scale of symbols associated with the limiting case $\alpha=1$. More specifically, we find that if we replace $p^{-1-s}$ in the Euler product for $\zeta(s+1)$ by $\lambda (\log p) p^{-1-s}$, then this new symbol is in $\mathcal{X}$ if and only if $\lambda\le 1$, the point being to nail down the exact edge for a symbol to be in $\mathcal{X}$ when its coefficients enjoy a multiplicative structure. The methods used to prove this result come from two number theoretic papers of respectively Hilberdink \cite{Hilberdink09} and G\'{a}l \cite{Gal49}.     

In Section~\ref{sec:homogen}, we deduce conditions on the coefficients $b_n$ of a symbol $g(s)=\sum_{n\geq1} b_n n^{-s}$ to be in $\mathcal{X}$. We begin by showing that a linear symbol is in $\mathcal{X}$ if and only if $g$ is in $\mathcal{H}^2$.  This leads naturally to a consideration of $m$-homogeneous symbols, i.e., symbols such that $b_n$ is nonzero only if $n$ has $m$ prime factors, counting multiplicity. We obtain optimal weighted $\ell^2$-conditions for every $m\ge 2$, showing in particular that the Dirichlet series of $g$ in general converges in $\mathbb{C}_{1/m}$ and in no larger half-plane. Letting $m$ tend to $\infty$, we find that there exists a positive constant $c$, not larger than $2\sqrt{2}$,  such that 
\[ \| \mathbf{T}_g \| \le C  \Bigg(|b_2|^2+\sum_{n=3}^\infty |b_n|^2 n e^{-c\sqrt{\log n \log\log n}}\Bigg)^{1/2} \]
holds for every $g$ in $\mathcal{X}$. These results are inspired by and will be compared with analogous results of Queff{\'e}lec et~al. \cite{BCQ06,MQ10} on Bohr's absolute convergence problem for homogeneous Dirichlet series.

Section~\ref{sec:embed} begins with our general result about Carleson measures and is subsequently concerned with a study of to what extent our results for $\mathcal{X}_2$ carry over to $\mathcal{X}_p$. As already mentioned, our understanding remains incomplete, but we will see that a fair amount of nontrivial conclusions can be drawn from our general condition.

In the last two sections, we return again to the Hilbert space setting. Section~\ref{sec:hankel} explores the relationship between $\mathbf{T}_g$, Hankel operators, and the dual of $\mathcal{H}^1$. In particular, this section gives background for what we have listed as Question 2 above. Finally, Section~\ref{sec:RPK} investigates the compactness of $\mathbf{T}_g$, with particular attention paid to the action of $\mathbf{T}_g$ on reproducing kernels. Here we return to the symbols considered in Section~\ref{sec:multip} which will allow us to display an example of a non-compact $\mathbf{T}_g$-operator. 

\subsection*{Notation} \label{sec:notation}
We will use the notation $f(x) \ll g(x)$ if there is some constant $C>0$ such that $|f(x)|\leq C|g(x)|$ for all (appropriate) $x$. If we have both $f(x) \ll g(x)$ and $g(x) \ll f(x)$, we will write $f(x)\asymp g(x)$. If 
\[\lim_{x\to\infty} \frac{f(x)}{g(x)} = 1, \]
we write $f(x) \sim g(x)$. The increasing sequence of prime numbers will be denoted by $\{p_j\}_{j\geq1}$, and the subscript will sometimes be dropped when there can be no confusion. 
Given a positive rational number $r$, we will denote the prime number factorization
\[r = p_1^{\kappa_1} p_2^{\kappa_2} \cdots p_d^{\kappa_d}\]
by $r = (p_j)^\kappa$. This associates uniquely to $r$ the finite multi-index $\kappa(r) = (\kappa_1,\,\kappa_2,\,\ldots\,)$. For $\chi$ in $\mathbb{T}^\infty$, we set $\chi(r): = (\chi_j)^\kappa$, when $r=(p_j)^\kappa$. If $r$ is an integer, say $n$, then the multi-index $\kappa(n)$ will have non-negative entries. We let $(m,n)$ denote the greatest common divisor of two positive integers $m$ and $n$. The number of prime factors in $n$ will be denoted $\Omega(n)$ (counting multiplicities) and $\omega(n)$ (not counting multiplicities), and $\pi(x)$ will denote the number of primes less than or equal to $x$. We will let $\log_k$ denote the $k$-fold logarithm so that $\log_2 x =\log\log x$, $\log_3 x =\log\log\log x$, and so on. To avoid cumbersome notation, we will use the convention that $\log_k x=1$ when $x\le x_k$, where $x_2=e^e$ and $x_{k+1}=e^{x_k}$ for $k\ge 2$.

\section{The Hardy spaces $\mathcal{H}^p$, symbols of Volterra operators, and $\BMO$ in half-planes} \label{sec:prelim} 

\subsection{Hardy spaces of Dirichlet series} The Bohr lift of the Dirichlet series $f(s)=\sum_{n\geq1} a_n n^{-s}$ is the power series $\mathcal{B}f(z) = \sum_{n\geq1} a_n z^{\kappa(n)}$. For $0 < p < \infty$, we define $\mathcal{H}^p$ as the space of Dirichlet series $f$ such that $\mathcal{B}f$ is in $H^p(\mathbb{D}^\infty)$, and we set 
\[\|f\|_{\mathcal{H}^p} := \|\mathcal{B}f\|_{H^p(\mathbb{D}^\infty)} = \left(\int_{\mathbb{T}^\infty} |\mathcal{B}f(z)|^p\,dm_\infty(z)\right)^\frac{1}{p}.\]
Here $m_\infty$ denotes the Haar measure of the infinite polytorus $\mathbb{T}^\infty$, which is simply the product of the normalized Lebesgue measure of the torus $\mathbb{T}$ in each variable. Note that for $p=2$, we have
\[\|f\|_{\mathcal{H}^2} = \left(\sum_{n=1}^\infty |a_n|^2\right)^\frac{1}{2}.\]
We refer to \cite{QQ13} (or to \cite{Bayart02,HLS97}) for a treatment of the properties of $\mathcal{H}^p$, describing briefly the basic results we require below. For a character $\chi$ in $\mathbb{T}^\infty$, we define
\[f_\chi(s) := \sum_{n=1}^\infty a_n \chi(n) n^{-s}.\]
For $\tau$ in $\mathbb{R}$, the vertical translation of $f$ will be denoted by $f_\tau(s) := f(s+i\tau)$. It is well-known (see \cite[Sec.~2]{HLS97}) that if $f$ converges uniformly in some half-plane $\mathbb{C}_\theta$, then $f_\chi$ is a normal limit of vertical translations $\{f_{\tau_k}\}_{k\geq1}$ in $\mathbb{C}_\theta$.

The conformally invariant Hardy space $H_{\operatorname{i}}^p(\mathbb{C}_\theta)$ consists of holomorphic functions in  $\mathbb{C}_\theta$ that are finite with respect to the norm given by
\[\|f\|_{H_{\operatorname{i}}^p(\mathbb{C}_\theta)} := \sup_{\sigma>\theta} \left(\frac{1}{\pi}\int_{\mathbb{R}} |f(\sigma+it)|^p\,\frac{dt}{1+t^2}\right)^\frac{1}{p}.\]
The following connection between $\mathcal{H}^p$ and $H_{{\operatorname{i}}}^p(\mathbb{C}_0)$ can be obtained from Fubini's theorem:
\begin{equation}
	\label{eq:avgrotemb} \|f\|_{\mathcal{H}^p}^p = \int_{\mathbb{T}^\infty} \|f_\chi\|^p_{H^p_{\operatorname{i}}(\mathbb{C}_0)} \, dm_\infty(\chi). 
\end{equation}
Based on \eqref{eq:avgrotemb}, one can deduce Littlewood--Paley type expressions for the norms of $\mathcal{H}^p$. This was first done for $p=2$ in \cite[Prop.~4]{Bayart03}, and later for $0<p < \infty$ in \cite[Thm.~5.1]{BQS14}, where the formula
\begin{equation}
	\label{eq:LPp} \|f\|_{\mathcal{H}^p}^p \asymp |f(+\infty)|^p + \int_{\mathbb{T}^\infty} \int_{\mathbb{R}}\int_0^\infty |f_\chi(\sigma+it)|^{p-2}|f_\chi'(\sigma+it)|^2 \sigma d\sigma\frac{dt}{1+t^2}dm_\infty(\chi)
\end{equation}
was obtained. When $p=2$, we have equality between the two sides of \eqref{eq:LPp}. We note in passing that this fact can be used to relate $\mathcal{X}$ to $\mathcal{H}^\infty$, the space of bounded Dirichlet series in $\mathbb{C}_0$ endowed with the norm
\[\|f\|_\infty := \sup_{\sigma>0}|f(s)|, \qquad s = \sigma +it.\]
Indeed, let $M_g$ denote the operator of multiplication by $g$ on $\mathcal{H}^2$, and recall the result that $M_g$ is bounded if and only if $g$ is in $\mathcal{H}^\infty$, with $\|M_g\| = \|g\|_\infty$ (see \cite[Thm.~3.1]{HLS97}). Since $(fg)'=f'g  +(\mathbf T_g f)'$, it then follows from the Littlewood--Paley formula and the triangle inequality that 
\begin{equation} \label{eq:TgHi}
\| \mathbf{T}_g\| \le 2 \| g\|_{\infty} \end{equation}
and consequently $\mathcal{H}^\infty\subset \mathcal{X}$.

Dirichlet series in $\mathcal{H}^p$ for $0< p<\infty$ are however generally convergent only in $\mathbb{C}_{1/2}$. In this half-plane, we have the following local embedding from \cite[Thm.~4.11]{HLS97}. For every $\tau$ in $\mathbb{R}$, 
\begin{equation}
	\label{eq:localemb} \int_{\tau}^{\tau+1}|f(1/2+it)|^2\,dt \leq C\|f\|_{\mathcal{H}^2}^2. 
\end{equation}
It is sometimes more convenient to use the equivalent formulation that 
\begin{equation}
	\label{eq:localiemb} \|f\|_{H^2_{\operatorname{i}}(\mathbb{C}_{1/2})}^2\leq\widetilde{C}\|f\|_{\mathcal{H}^2}^2. 
\end{equation}
It is interesting to compare \eqref{eq:avgrotemb} and \eqref{eq:localiemb}. These formulas illustrate why both half-planes $\mathbb{C}_0$ and $\mathbb{C}_{1/2}$ appear in the theory of the Hardy spaces $\mathcal{H}^p$. It will become apparent in what follows that both half-planes show up in a natural way also in the study of Volterra operators.

\subsection{$\BMO$ spaces in half-planes} The space $\BMOA(\mathbb{C}_\theta)$ consists of holomorphic functions in the half-plane $\mathbb{C}_\theta$ that satisfy 
\[\|g\|_{\BMO(\mathbb{C}_\theta)} := \sup_{I \subset \mathbb{R}} \frac{1}{|I|}\int_{I}\left|f(\theta+it)-\frac{1}{|I|}\int_I f(\theta+i\tau)\,d\tau\right|\,dt < \infty. \]
We let as before $\mathcal{D}$ denote the space of functions that can be represented by Dirichlet series in some half-plane. The abscissa of boundedness of a given $g$ in $\mathcal{D}$,  denoted by $\sigma_b$, is the smallest real number such that $g(s)$ has a bounded analytic continuation to $\mre(s)\geq\sigma_b+\delta$ for every $\delta>0$. A classical theorem of Bohr \cite{Bohr13} states that the Dirichlet series $g(s)$ converges uniformly in $\mre(s)\geq\sigma_b+\delta$ for every $\delta>0$.
\begin{lem}
	\label{lem:chibmo} Assume that $g$ is in $\mathcal{D} \cap \BMOA(\mathbb{C}_0)$. Then 
	\begin{itemize}
		\item[(i)] $g$ has $\sigma_b\leq 0$; 
		\item[(ii)] $g_{\chi}$ is in $\BMOA(\mathbb{C}_0)$ and $\|g_\chi\|_{\BMO}=\|g\|_{\BMO}$ for every character $\chi$; 
		\item[(iii)] $g$ is in $\bigcap_{0<p<\infty} \mathcal{H}^p$ and $\exp(c|\mathcal{B}g|)$ is integrable on $\mathbb{T}^\infty$ for some $c>0$.
	\end{itemize}
\end{lem}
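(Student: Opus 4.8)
The plan is to prove the three assertions in the order (i), (ii), (iii), exploiting the fact that membership in $\BMOA(\mathbb{C}_0)$ is a strong hypothesis that interacts nicely with the Dirichlet series structure through vertical translations and the action of characters. The unifying idea is that the $\BMO$ norm is invariant under vertical translation (since translating the boundary argument $t \mapsto t + \tau$ merely permutes the intervals $I$ over which we take the supremum), and that each character $\chi$ arises, by the normal-families result quoted after \eqref{eq:avgrotemb}, as a normal limit of vertical translations $f_{\tau_k}$. Thus any property of $g$ on the line $\mre s = 0$ that is both translation-invariant and stable under normal limits will automatically pass to every $g_\chi$.

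For (i), the key observation is that a $\BMOA(\mathbb{C}_0)$ function is, in particular, in every $H^p_{\operatorname{i}}(\mathbb{C}_0)$ — this is a classical consequence of the John--Nirenberg inequality in the half-plane. Since $g$ is given to lie in $\mathcal{D}$, it has \emph{some} Dirichlet series representation convergent in a half-plane; I would combine the growth control coming from boundedness in $H^p_{\operatorname{i}}$ with the fact that a Dirichlet series which is bounded on $\mre s > \delta$ for every $\delta > 0$ must have $\sigma_b \le 0$. Concretely, one shows that the half-plane $\mre s > 0$ is contained in the domain where $g$ extends boundedly on the subhalf-planes $\mre s \ge \delta$; a clean way is to invoke the subharmonicity/Poisson-integral representation forcing sub-polynomial growth, together with Bohr's theorem to transfer this to uniform convergence. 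I would lean on the standard estimate that $\BMOA$ functions in the disc (hence, via conformal invariance of the $\BMO$ norm, in $\mathbb{C}_0$) grow at most logarithmically toward the boundary.

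For (ii), I would argue that since $g_\chi$ is a normal limit of vertical translates $g_{\tau_k}$, and each translate satisfies $\|g_{\tau_k}\|_{\BMO} = \|g\|_{\BMO}$ by the translation invariance noted above, a lower-semicontinuity argument for the $\BMO$ seminorm under normal (hence local uniform) convergence yields $\|g_\chi\|_{\BMO} \le \|g\|_{\BMO}$. For the reverse inequality, I would use that the original $g$ is itself recoverable as a normal limit of translates of $g_\chi$ (the group of characters acts so that the orbit of vertical translations is dense in the appropriate sense), giving $\|g\|_{\BMO} \le \|g_\chi\|_{\BMO}$ and hence equality. The only care needed is to confirm that the $\BMO$ seminorm, defined as a supremum of averages over intervals, is indeed lower semicontinuous under local uniform convergence, which follows because each individual interval-average is continuous under such convergence and the supremum of lower-semicontinuous functions is lower semicontinuous.

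For (iii), the integrability statement $\int_{\mathbb{T}^\infty} \exp(c|\mathcal{B}g|)\,dm_\infty < \infty$ is the heart of the matter and the step I expect to be the main obstacle. The natural route is to fix the exponential integrability of a single $\BMOA(\mathbb{C}_0)$ function via the John--Nirenberg inequality, which gives $\int_{\mathbb{R}} \exp(c|g(it)|)\,\frac{dt}{1+t^2} < \infty$ with $c$ depending only on $\|g\|_{\BMO}$, uniformly over all translates and hence, by (ii), uniformly over all $g_\chi$. Then I would integrate over characters using \eqref{eq:avgrotemb}-type Fubini reasoning: writing the $\mathbb{T}^\infty$ integral of $\exp(c|\mathcal{B}g|)$ as an average over $\chi$ of the conformally-invariant integrals of $\exp(c|g_\chi|)$ on the line $\mre s = 0$, and using that the John--Nirenberg constant $c$ does not depend on $\chi$. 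The delicate point is matching the boundary values $\mathcal{B}g$ on $\mathbb{T}^\infty$ with the family $\{g_\chi\}$ and ensuring the uniform John--Nirenberg bound survives the integration; once $\exp(c|\mathcal{B}g|)$ is integrable, expanding the exponential in its power series immediately gives $\mathcal{B}g \in L^p(\mathbb{T}^\infty)$ for every finite $p$, that is, $g \in \bigcap_{0<p<\infty}\mathcal{H}^p$.
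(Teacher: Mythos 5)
Your treatments of (ii) and (iii) follow essentially the same route as the paper (translation invariance plus normal limits for (ii); John--Nirenberg made uniform in $\chi$ via (ii) and then Fubini over $\mathbb{T}^\infty$ for (iii)), with only a small point to tidy in (iii): John--Nirenberg controls $e^{c|g_\chi - (g_\chi)_I|}$, so to pass to $e^{c|g_\chi|}$ you must bound the mean uniformly in $\chi$; this is where (i) is used again, since $\sigma_b\le 0$ makes $|g_\chi(1)|\le\sum_n|b_n|n^{-1}<\infty$ independently of $\chi$.

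The genuine gap is in (i). Your proposed mechanism --- $\BMOA(\mathbb{C}_0)\subset\bigcap_p H^p_{\operatorname{i}}(\mathbb{C}_0)$, hence sub-polynomial (or logarithmic) growth, then Bohr's theorem --- does not yield $\sigma_b\le 0$. Bohr's theorem converts \emph{boundedness} on $\mathbb{C}_\delta$ into uniform convergence; it takes boundedness as input, and neither membership in every $H^p_{\operatorname{i}}(\mathbb{C}_0)$ nor the logarithmic growth estimate supplies it: a function such as $\log s$ lies in $\BMOA(\mathbb{C}_0)$ and in every $H^p_{\operatorname{i}}(\mathbb{C}_0)$ yet is unbounded on every $\mathbb{C}_\delta$, and the pointwise bound $|g(\delta+it)|\ll\|g\|_{\BMO}\log(2+|t|)$ is genuinely unbounded in $t$. (Likewise, growth conditions on a vertical line do not control $\sigma_b$: even sub-polynomial growth of a Dirichlet series on $\mathbb{C}_\theta$ says nothing about boundedness there.) The missing idea, which is the heart of the paper's proof of (i), is to exploit that $g\in\mathcal{D}$ is already \emph{bounded} by some $M$ on the far right half-plane $\mre s\ge\sigma_b+1$, and to anchor the $\BMO$ condition there: the Garsia-type characterization gives
\[
\int_{\mathbb{R}}|g(i\tau)-g(\sigma_b+1+it)|\,\frac{\sigma_b+1}{(\sigma_b+1)^2+(\tau-t)^2}\,\frac{d\tau}{\pi}\le C
\]
uniformly in $t$, whence $\int_{t-\sigma_b-1}^{t+\sigma_b+1}|g(i\tau)|\,d\tau\le 2(\sigma_b+1)(M+C)$ for every $t$. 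These interval averages are bounded \emph{uniformly in the location of the interval} (unlike the $L^p_{\operatorname{i}}$ integrals, whose weight $(1+t^2)^{-1}$ decays), and feeding them into the Poisson representation of $g$ shows $g$ is bounded on each $\mathbb{C}_\delta$, i.e.\ $\sigma_b\le 0$. Without this comparison to the bounded right half-plane your argument cannot distinguish $g$ from a generic $\BMOA(\mathbb{C}_0)$ function, for which the conclusion fails.
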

An interesting point is that the space $\mathcal{D} \cap \BMOA(\mathbb{C}_0)$ enjoys a stronger translation invariance, expressed by items (i) and (ii), than what the space $\BMOA(\mathbb{C}_0)$ itself does. Lemma~\ref{lem:chibmo} can also be interpreted as saying that $\mathcal{D} \cap \BMOA(\mathbb{C}_0)$ is only ``slightly larger'' than $\mathcal{H}^\infty$. 
We will later see that part (iii) of Lemma~\ref{lem:chibmo} holds whenever $\mathbf{T}_g$ is a bounded operator.
\begin{proof}
	[Proof of Lemma~\ref{lem:chibmo}] By the definition of $\sigma_b$,  there exists a positive number $M$ such that $|g(\sigma+it)|\leq M$ whenever $\sigma\geq \sigma_b+1$. Since $g$ is assumed to be in $\BMOA(\mathbb{C}_0)$, there exists a constant $C$ such that
	\[\int_{-\infty}^\infty |g(i\tau)-g(\sigma_b+1+it)| \frac{\sigma_b+1}{(\sigma_b+1)^2+(\tau-t)^2} \frac{d\tau}{\pi} \leq C. \]
	Therefore, by the triangle inequality, we find that
	\[\int_{t-\sigma_b-1}^{t+\sigma_b+1} |g(i\tau)| d\tau \leq 2(\sigma_b+1)\cdot (M+C). \]
	Writing $g$ as a Poisson integral, we see that this bound implies (i). Now (ii) follows immediately from the translation invariance of $\BMOA$, the characterization of $\BMOA$ in terms of Poisson integrals, and that $f_\chi$ is a normal limit of vertical translations of $f$ in $\mathbb{C}_0$ by (i). 	To prove (iii), we use the John--Nirenberg inequality to conclude that there is $c = c(\|g\|_{\BMO}) > 0$ and $C = C(\|g\|_{\BMO})$ such that 
	\[\big\|e^{c|g - g(1)|}\big\|_{L^1_{\operatorname{i}}(i\mathbb{R})} :=\frac{1}{\pi}\int_{\mathbb{R}} e^{c|g(it)-g(1)|}\,\frac{dt}{1+t^2} \leq C.\]
	Since $\sigma_b(g)\leq0$, we know that $g$ is absolutely convergent at $s=1$, so
	\[\big\|e^{c|g - g(1)|}\big\|_{L^1_{\operatorname{i}}(i\mathbb{R})} \asymp \big\|e^{c|g|}\big\|_{L^1_{\operatorname{i}}(i\mathbb{R})},\]
	where the implied constant depends on $g$, but only on the absolute value of its coefficients. In particular, we can conclude that
	\[\big\|e^{c|g_\chi|}\big\|_{L^1_{\operatorname{i}}(i\mathbb{R})} \leq \widetilde{C},\]
	for every $\chi\in\mathbb{T}^\infty$, and $\widetilde{C}$ does not depend on $\chi$, by (ii). Integrating over $\mathbb{T}^\infty$ and using Fubini's theorem as in \eqref{eq:avgrotemb} allows us to conclude that $\exp(c|\mathcal{B}g|)$ is in $L^2(\mathbb{T}^\infty)$, which also implies that $g$ is  in $\bigcap_{0<p<\infty} \mathcal{H}^p$.
\end{proof}

We require the following standard result, which can be extracted from \cite[Sec.~VI.1]{Garnett}.
\begin{lem}
	\label{lem:carlesonbmohalfplane} Let $g$ be holomorphic in $\mathbb{C}_\theta$. Then the measure
	\[\mu_g(s) = |g'(\sigma+it)|^2 \left(\sigma-\theta\right)\,d\sigma\,\frac{dt}{1+t^2}\]
	is Carleson for $H_{\operatorname{i}}^p(\mathbb{C}_\theta)$ if and only if $g$ is in $\BMOA(\mathbb{C}_\theta)$, and $\|\mu_g\|_{\mathrm{CM}(H^p_{\operatorname{i}})} \asymp \|g\|^2_{\BMO(\mathbb{C}_0)}$. 
\end{lem}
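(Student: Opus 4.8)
The plan is to transfer the entire statement to the unit disc by a conformal map and then invoke the classical disc characterization. After the real translation $s \mapsto s-\theta$, under which the weight $\sigma-\theta$ becomes $\sigma$ while $\frac{dt}{1+t^2}$, the space $H^p_{\operatorname{i}}$, and the seminorm $\|\cdot\|_{\BMO}$ are all covariant, I may assume $\theta = 0$. I would then use the Cayley-type involution $\varphi(z) = (1-z)/(1+z)$, which maps $\D$ onto $\mathbb{C}_0$. The first step is to record that $\varphi$ induces an isometry $H^p_{\operatorname{i}}(\mathbb{C}_0) \cong H^p(\D)$: a direct computation gives $\varphi(e^{i\theta}) = -i\tan(\theta/2)$ and $\frac{1}{\pi}\frac{dt}{1+t^2} = dm_1$ on the boundary, so that $\|f\|_{H^p_{\operatorname{i}}(\mathbb{C}_0)} = \|f\circ\varphi\|_{H^p(\D)}$. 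Consequently a measure $\mu$ on $\mathbb{C}_0$ is Carleson for $H^p_{\operatorname{i}}(\mathbb{C}_0)$ if and only if its pullback $\varphi^{-1}_\ast\mu$ is Carleson for $H^p(\D)$, with equal Carleson constants.

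Writing $\tilde g = g\circ\varphi$, I would next invoke two standard facts from \cite[Sec.~VI.1]{Garnett}: first, that the Cayley transform preserves $\BMO$, so $g \in \BMOA(\mathbb{C}_0)$ if and only if $\tilde g \in \BMOA(\D)$ with $\|\tilde g\|_{\BMO(\D)} \asymp \|g\|_{\BMO(\mathbb{C}_0)}$; and second, the classical theorem that $\tilde g \in \BMOA(\D)$ if and only if $(1-|z|^2)|\tilde g'(z)|^2\,dA(z)$ is Carleson for $H^p(\D)$, with Carleson norm $\asymp \|\tilde g\|^2_{\BMO(\D)}$ for every $0 < p < \infty$. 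Combining this with the isometry, the task reduces to showing that $\varphi^{-1}_\ast\mu_g$ and the classical disc measure $(1-|z|^2)|\tilde g'(z)|^2\,dA(z)$ generate the same class of Carleson measures, with comparable constants.

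A change of variables gives $|g'(s)|^2\,d\sigma\,dt = |\tilde g'(z)|^2\,dA(z)$ and hence $\varphi^{-1}_\ast\mu_g = W(z)\,|\tilde g'(z)|^2\,dA(z)$, where $W(z) = (1-|z|^2)\,|1+z|^2 \big/ \big(|1+z|^4 + 4(\Im z)^2\big)$. The key point is that $W$ is comparable to $1-|z|^2$ uniformly on $\D$ \emph{away from} $z=-1$, but the two are \emph{not} pointwise comparable near $z = -1$, the image of the point at infinity; this reflects the general fact that the $\BMOA$ Carleson density is not conformally invariant, but transforms with a Jacobian factor. I would therefore not compare densities pointwise. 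Instead I would compare Carleson box functionals: for any box $S(I)$ over an arc $I\subset\T$ staying outside a fixed neighborhood of $-1$, the two measures of $S(I)$ are comparable, while the one-sided estimate $W \geq c\,(1-|z|^2)$ holds on all of $\D$, yielding at once the implication ``$\mu_g$ Carleson $\Rightarrow g \in \BMOA(\mathbb{C}_0)$''. For the boxes near $-1$, equivalently the tents at infinity in $\mathbb{C}_0$, I would verify the box condition directly from the half-plane Carleson condition for $|g'|^2\sigma\,d\sigma\,dt$, using that $1/(1+t^2)$ is essentially constant on each bounded tent and that it integrates the mass near infinity against the harmonic measure at $s=1$.

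The main obstacle is precisely this last reconciliation near the point at infinity: proving that the Möbius image of the Carleson box condition survives at $z=-1$, where pointwise comparability of the densities fails. This is the only genuinely technical point, and it is what lies behind the phrase ``standard result''; everything else is the conformal invariance of the Hardy space, of $\BMO$, and of the \emph{class} (though not the density) of Carleson measures. Finally, the $p$-independence asserted in the statement follows because the Carleson measures for $H^p(\D)$ coincide for all $0 < p < \infty$, so the resulting Carleson norm, and hence its comparability to $\|g\|^2_{\BMO(\mathbb{C}_0)}$, does not depend on $p$.
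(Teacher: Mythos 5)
The paper offers no proof of this lemma (it is cited as extractable from Garnett), so your argument must stand on its own. Most of your reduction is sound: the isometry $H^p_{\operatorname{i}}(\mathbb{C}_0)\cong H^p(\D)$ via the Cayley map, the invariance of $\BMO$ under it, the computation of the pulled-back weight $W$, and the global lower bound $W\geq c\,(1-|z|^2)$ (which follows from $4(\operatorname{Im} z)^2\leq 4|1+z|^2$ and $|1+z|^4\leq 4|1+z|^2$ on $\D$) are all correct, and together they do yield the implication ``$\mu_g$ Carleson $\Rightarrow g\in\BMOA(\mathbb{C}_0)$'' with $\|g\|_{\BMO}^2\ll\|\mu_g\|_{\mathrm{CM}}$.

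The step you defer to the end is, however, a genuine gap and not merely a technical one: for a general holomorphic $g$ the box condition at $z=-1$ is false, so no verification can succeed. Take $g(s)=\log(1+s)$, which belongs to $\BMOA(\mathbb{C}_0)$ since its boundary values behave like $\log|t|$. Then $|g'(\sigma+it)|^2=\left((1+\sigma)^2+t^2\right)^{-1}$ and $\int_0^\infty \sigma\left((1+\sigma)^2+t^2\right)^{-1}d\sigma=\infty$ for every $t$, so $\mu_g(\mathbb{C}_0)=\infty$ and $\mu_g$ cannot be Carleson for $H^p_{\operatorname{i}}(\mathbb{C}_0)$ (test against $f\equiv1$). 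The source of the trouble is visible in your own computation: the genuinely conformally invariant weight is $1-|z|^2$, whose pushforward to $\mathbb{C}_0$ is $4\sigma/|1+s|^2$, and this agrees with the lemma's weight $\sigma/(1+t^2)$ only on $\{\sigma\lesssim 1+|t|\}$; on $\{\sigma\gg 1+|t|\}$ the discrepancy is an unbounded factor, and that region is exactly what the Carleson boxes at $z=-1$ sweep out. So the ``if'' direction requires an extra hypothesis controlling $g'$ as $\sigma\to+\infty$, for instance $\int_{\mathbb{R}}\int_{1+|t|}^{\infty}|g'|^2\sigma\,d\sigma\,dt/(1+t^2)<\infty$. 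This holds automatically in every application the paper makes of the forward direction, since there $g$ is a vertical limit of a Dirichlet series with $\sigma_b\le 0$ and hence $|g'(\sigma+it)|\ll 2^{-\sigma}$. With such a hypothesis your scheme does close: boxes away from $-1$ are handled by the two-sided comparability of $W$ and $1-|z|^2$ there together with the classical disc theorem, and the boxes at $-1$ split into the comparable region $\{\sigma\lesssim 1+|t|\}$ and the tail whose total mass you have assumed finite. You should state this restriction explicitly rather than present the lemma for arbitrary holomorphic $g$.
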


We are now ready for a first result, saying that for the boundedness of $\mathbf{T}_g$ it is sufficient that $g$ is in $\BMOA(\mathbb{C}_0)$ and necessary that it is in $\BMOA(\mathbb{C}_{1/2})$. On the one hand, it is a preliminary result, following rather directly from the available theory of $\mathcal{H}^2$, outlined above. On the other hand, as we  shall see in Section~\ref{sec:multip} and Section~\ref{sec:homogen}, $\mathbb{C}_0$ and $\mathbb{C}_{1/2}$ are the extremal half-planes of convergence for symbols $g$ inducing bounded Volterra operators. 
\begin{thm}
	\label{thm:bmocond} Let\/ $\mathbf{T}_g$ be the operator defined in \eqref{eq:vintop} for some Dirichlet series $g$ in $\mathcal{D}$. 
	\begin{itemize}
		\item[(a)] If $g$ is in $\BMOA(\mathbb{C}_0)$, then $\mathbf{T}_g$ is bounded on $\mathcal{H}^2$. 
	\end{itemize}
	Suppose that $\mathbf{T}_g$ is bounded on $\mathcal{H}^2$. Then, 
	\begin{itemize}
		\item[(b)] $g$ satisfies condition {\normalfont (iii)} from Lemma~\ref{lem:chibmo}; 
		\item[(c)] $g$ is in $\BMOA(\mathbb{C}_{1/2})$. 
	\end{itemize}
\end{thm}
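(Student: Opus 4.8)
The three parts call for rather different tools, so the plan is to treat them in turn, using throughout the identity $(\mathbf{T}_g f)' = f g'$ together with $\mathbf{T}_g f(+\infty) = 0$, both immediate from \eqref{eq:vintop}. For part (a), I would feed this into the Littlewood--Paley formula \eqref{eq:LPp} at $p=2$, where it is an equality. Since $(\mathbf{T}_g f)_\chi' = f_\chi\, g_\chi'$, this gives
\[ \|\mathbf{T}_g f\|_{\mathcal{H}^2}^2 = \int_{\mathbb{T}^\infty}\int_{\mathbb{R}}\int_0^\infty |f_\chi(\sigma+it)|^2\,|g_\chi'(\sigma+it)|^2\,\sigma\,d\sigma\,\frac{dt}{1+t^2}\,dm_\infty(\chi). \]
Now Lemma~\ref{lem:chibmo}(ii) furnishes $g_\chi \in \BMOA(\mathbb{C}_0)$ with $\|g_\chi\|_{\BMO}=\|g\|_{\BMO}$ for every $\chi$, and Lemma~\ref{lem:carlesonbmohalfplane} with $\theta=0$ turns this into a Carleson bound for the measures $\mu_{g_\chi}$ that is uniform in $\chi$. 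The inner double integral is then at most $C\|g\|_{\BMO}^2\,\|f_\chi\|_{H^2_{\operatorname{i}}(\mathbb{C}_0)}^2$, and integrating over $\chi$ via \eqref{eq:avgrotemb} yields $\|\mathbf{T}_g f\|_{\mathcal{H}^2}\le C\|g\|_{\BMO}\|f\|_{\mathcal{H}^2}$.

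For part (b) the plan is to run Pommerenke's trick. A direct induction gives $\mathbf{T}_g^n 1 = (g - g(+\infty))^n/n!$, since the right-hand side vanishes at $+\infty$ and its derivative is $(g-g(+\infty))^{n-1}g'/(n-1)!$. Boundedness of $\mathbf{T}_g$ with norm $M$ forces $\|(g-g(+\infty))^n\|_{\mathcal{H}^2}\le M^n n!$, so writing $G:=\mathcal{B}(g-g(+\infty))$ and using that $\mathcal{B}$ is a ring homomorphism, this reads $\int_{\mathbb{T}^\infty}|G|^{2n}\,dm_\infty \le M^{2n}(n!)^2 \le M^{2n}(2n)!$. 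Expanding $\exp(c|G|)$ in a Taylor series, bounding the even moments directly and the odd ones by Cauchy--Schwarz (the combinatorial factor $n!(n+1)!/(2n+1)!\le 1$ doing the work), every term is dominated by $(cM)^k$, so $\exp(c|G|)\in L^1(\mathbb{T}^\infty)$ whenever $cM<1$. Absorbing the constant $g(+\infty)$ gives integrability of $\exp(c|\mathcal{B}g|)$, which is precisely condition (iii) of Lemma~\ref{lem:chibmo} and places $g$ in every $\mathcal{H}^p$.

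Part (c) is where I expect the real difficulty, and the strategy is to verify the Carleson-box form of $\BMOA(\mathbb{C}_{1/2})$ provided by Lemma~\ref{lem:carlesonbmohalfplane} with $\theta=1/2$. Applying the local embedding \eqref{eq:localiemb} to $\mathbf{T}_g f$ and the (one-sided, classical) Littlewood--Paley estimate for $H^2_{\operatorname{i}}(\mathbb{C}_{1/2})$ to the primitive $\mathbf{T}_g f$, whose derivative is $f g'$, one obtains
\[ \int_{\mathbb{R}}\int_{1/2}^\infty |f(\sigma+it)|^2\,|g'(\sigma+it)|^2\,\left(\sigma-\tfrac12\right)d\sigma\,\frac{dt}{1+t^2} \le C M^2\|f\|_{\mathcal{H}^2}^2 \]
for every $f$ in $\mathcal{H}^2$. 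I would test this against the reproducing kernel $K_w(s)=\zeta(s+\bar w)$ with $w=1/2+h$, using $\|K_w\|_{\mathcal{H}^2}^2=\zeta(1+2h)\asymp 1/h$ and the lower bound $|K_w(s)|\gtrsim 1/h$ on the box $\{1/2<\sigma<1/2+h,\ |t|<h\}$, which comes from the simple pole of $\zeta$ at $1$.

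The main obstacle is the conformal weight $1/(1+t^2)$: testing at $w=1/2+h+it_0$ produces a spurious factor $1+t_0^2$ that destroys the Carleson condition for boxes far from the origin, and the $\mathcal{H}^2$-kernels available to us do not match the conformal geometry of $H^2_{\operatorname{i}}(\mathbb{C}_{1/2})$ for large $t_0$. I would circumvent this by exploiting that vertical translation is an isometry of $\mathcal{H}^2$ and that $(\mathbf{T}_g h)_\tau=\mathbf{T}_{g_\tau}h_\tau$, whence $\mathbf{T}_{g_\tau}$ is bounded with norm at most $M$ for every $\tau\in\mathbb{R}$. Running the kernel argument for each $g_\tau$ with a box centred at the origin, where the weight is comparable to $1$, and then changing variables $u=t+\tau$, yields the uniform flat box bound $\int_{Q_{I_\tau}}|g'|^2(\sigma-\tfrac12)\,d\sigma\,dt\le C M^2|I_\tau|$ over all intervals and all scales; large scales are handled by the same kernel with $h$ large and are easier. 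This flat box condition is the standard Carleson characterization underlying Lemma~\ref{lem:carlesonbmohalfplane}, and hence gives $g\in\BMOA(\mathbb{C}_{1/2})$.
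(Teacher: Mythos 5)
Your treatments of (a) and (b) are essentially the paper's own: (a) is the Littlewood--Paley identity \eqref{eq:LPp} at $p=2$ combined with Lemma~\ref{lem:chibmo}(ii) and Lemma~\ref{lem:carlesonbmohalfplane}, averaged over characters via \eqref{eq:avgrotemb}; (b) is Pommerenke's trick, the paper packaging the summation as $\|e^{c|\mathcal{B}g|/2}\|_{L^2}\le\sum_n (c\|\mathbf{T}_g\|/2)^n$ via the triangle inequality rather than splitting even and odd moments, but the content is identical.

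In (c) you genuinely diverge. Both arguments start from the same inequality
$\int_{\mathbb{R}}\int_{1/2}^\infty |f|^2|g'|^2(\sigma-\tfrac12)\,d\sigma\,\frac{dt}{1+t^2}\ll\|\mathbf{T}_g\|^2\|f\|_{\mathcal{H}^2}^2$,
obtained from the local embedding \eqref{eq:localiemb} and the Littlewood--Paley formula for $H^2_{\operatorname{i}}(\mathbb{C}_{1/2})$. From there the paper cites the Olsen--Saksman embedding theorem \cite[Thm.~3]{OS12} to transfer the Carleson property to the non-conformal $H^2(\mathbb{C}_{1/2})$, which only yields $g(s)/(s+1/2)\in\BMO(\mathbb{C}_{1/2})$; the factor $(s+1/2)^{-1}$ is then removed using $\|\mathbf{T}_{g_\chi}\|=\|\mathbf{T}_g\|$ together with the local embedding \eqref{eq:localemb} for intervals $|I|\ge1$. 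You instead test the inequality directly on the shifted zeta kernels $\zeta(s+\overline{w})$ and use the vertical-translation invariance $\|\mathbf{T}_{g_\tau}\|=\|\mathbf{T}_g\|$ to move every box to the origin, where the conformal weight is harmless; this re-proves the relevant direction of \cite[Thm.~3]{OS12} in the case at hand, is self-contained, and avoids the $(s+1/2)^{-1}$ detour. The two routes rest on the same pillars (local embedding plus translation invariance, the paper realizing the latter through characters rather than genuine translations), so the difference is one of packaging, but yours is arguably cleaner. One small correction to your sketch: the large boxes $|I|\ge1$ are not handled by ``the same kernel with $h$ large'' --- for $h\ge1$ one has $\|K_{1/2+h}\|_{\mathcal{H}^2}^2=\zeta(1+2h)\asymp1$ and the weight $1/(1+t^2)$ over $|t|<h$ costs a factor $h$, so the kernel test only gives $\mu(Q)\ll h^2$. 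The routine fix is to tile the bottom layer $1/2<\sigma<3/2$ of $Q_I$ by $O(|I|)$ unit boxes, each controlled by the small-box estimate, and to bound the remainder using $|g'(\sigma+it)|\ll\|g\|_{\mathcal{H}^2}2^{-\sigma}$ for $\sigma\ge3/2$, which is available because $g\in\mathcal{H}^2$ by part (b). With that amendment the argument is complete.
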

\begin{proof}
	We apply \eqref{eq:LPp} to $\mathbf{T}_gf$ and use Lemmas~\ref{lem:chibmo} and \ref{lem:carlesonbmohalfplane}. Since $(fg')_\chi = f_\chi g_\chi'$ we find that 
	\begin{align*}
		\|\mathbf{T}_g\|_{\mathcal{H}^2}^2 &\asymp \int_{\mathbb{T}^\infty} \int_\mathbb{R} \int_0^\infty |(fg')_\chi(\sigma+it)|^2 \sigma\,d\sigma\,\frac{dt}{1+t^2}\,dm_\infty(\chi) \\
		& \ll \int_{\mathbb{T}^\infty} \|f_\chi\|_{H_\mathrm{i}^2(\mathbb{C}_0)}^2 \|g_\chi\|_{\BMO(\mathbb{C}_0)}^2\,dm_\infty(\chi) = \|f\|_{\mathcal{H}^2}^2 \|g\|_{\BMO(\mathbb{C}_0)}^2. 
	\end{align*}
	This completes the proof of (a).
	
	For (b), we first observe that $\mathbf{T}_g1=g$, so that $g$ is in $\mathcal{H}^2$. Applying $\mathbf{T}_g$ inductively to the powers $g^n$, for $n =1,2,\ldots$, we get that
	\[\| g^n \|_{\mathcal{H}^2} \leq \| \mathbf{T}_g\|^n n! . \]
	Using this and the triangle inequality, we obtain
	\[ \big\| e^{c|\mathcal{B} g|}\big\|_{L^1(\T^\infty)}^{1/2}=\big\| e^{c|\mathcal{B} g|/2} \big\|_{L^2(\T^\infty)}\le 
	\sum_{n=0}^\infty \Big(\frac{ c  \| \mathbf{T}_g\|}{2}\Big)^n  , \]
	which implies that  $e^{c|\mathcal{B} g|}$ is integrable whenever $c<2/ \| T_g\|$.
	
	To prove (c), we use the Littlewood--Paley formula for $H^2_{\operatorname{i}}(\mathbb{C}_{1/2})$ and \eqref{eq:localiemb} to see that
	\[\int_\mathbb{R} \int_{1/2}^\infty |f(\sigma+it)|^2\, |g'(\sigma+it)|^2 \left(\sigma-\frac{1}{2}\right)\,d\sigma\,\frac{dt}{1+t^2} \asymp \|\mathbf{T}_g f\|_{H^2_{\operatorname{i}}(\mathbb{C}_{1/2})}^2 \ll \|\mathbf{T}_gf\|_{\mathcal{H}^2}^2 \leq \|\mathbf{T}_g\|^2 \|f\|_{\mathcal{H}^2}^2.\]
	This means that
	\[\mu_g(s) = |g'(\sigma+it)|^2 \left(\sigma-\frac{1}{2}\right)\,d\sigma\,\frac{dt}{1+t^2}\]
	is a Carleson measure for $\mathcal{H}^2$ in $\mathbb{C}_{1/2}$. By \cite[Thm.~3]{OS12}, this implies that $\mu_g(s)$ is a Carleson measure for the non-conformal Hardy space $H^2(\mathbb{C}_{1/2})$, which as in Lemma~\ref{lem:carlesonbmohalfplane} means that $h(s): = g(s)/(s+1/2)$ is in $\BMO(\mathbb{C}_{1/2})$. Indeed, we have proved that $\|h\|_{\BMO(\mathbb{C}_{1/2})} \ll \|\mathbf{T}_g\|$.
	
	Let us show that the factor $(s+1/2)^{-1}$ can be removed, so that $g$ is  in fact in $\BMOA(\mathbb{C}_{1/2})$. We note first that if $|I|\geq1$, then it follows from the local embedding \eqref{eq:localemb} that
	\[\int_I |g(1/2+it)|^2\,dt \ll |I|\cdot\|g\|_{\mathcal{H}^2}^2,\]
	since $g$ is in $\mathcal{H}^2$ by (b). Hence we only need to consider intervals of length $|I|<1$. For a character $\chi$ in $\mathbb{T}^\infty$, we define
	\[h_\chi(s) := \frac{g_\chi(s)}{s+1/2}.\]
	Clearly, $\|\mathbf{T}_{g_\chi}\|=\|\mathbf{T}_g\|$ for every $\chi$ in $\mathbb{T}^\infty$. This means that
	\[\sup_{\chi\in\mathbb{T}^\infty} \|h_\chi\|_{\BMO(\mathbb{C}_{1/2})} \ll \|\mathbf{T}_g\|.\]
	In particular, the $\BMO$-norm of $h$ is uniformly bounded under vertical translations of $g$, so that we only need to consider intervals $I=[0,\tau]$ for $\tau<1$. On this interval, $(s+1/2)^{-1}$ and its derivative is bounded from below and above. It follows that $g$ is in $\BMO(\mathbb{C}_{1/2})$.  
\end{proof}

Combined with a result from \cite{HLS97}, part (b) of Theorem~\ref{thm:bmocond} yields the following result.

\begin{cor} \label{cor:chiest} If  $\mathbf{T}_g$ is bounded on $\mathcal{H}^2$, then for almost every character $\chi$ on $\mathbb{T}^\infty$, there is a constant $C$ such that 
	\begin{equation} \label{eq:subtle} |g_{\chi}(\sigma+i t)| \le C \log \frac{1+|t|}{\sigma}  \end{equation}
	holds in the strip $0<\sigma\le 1/2$. 
\end{cor}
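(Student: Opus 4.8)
The plan is to transport the boundary exponential integrability furnished by part (b) of Theorem~\ref{thm:bmocond} into the strip by means of a Poisson integral and Jensen's inequality. First I would assemble the ingredients. Since $\mathbf{T}_g$ is bounded, part (b) tells us that $g$ satisfies condition (iii) of Lemma~\ref{lem:chibmo}; in particular $g \in \mathcal{H}^p$ for every $p < \infty$ and $\exp(c|\mathcal{B}g|)$ is integrable on $\mathbb{T}^\infty$ for some $c>0$. The result from \cite{HLS97} to be invoked is that for $g \in \mathcal{H}^2$ the series $g_\chi$ converges in $\mathbb{C}_0$ for almost every $\chi$, so that $g_\chi$ is genuinely holomorphic in the whole strip $0 < \sigma \le 1/2$; moreover \eqref{eq:avgrotemb} applied with $p=1$ gives $g_\chi \in H^1_{\mathrm{i}}(\mathbb{C}_0)$ for almost every $\chi$. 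Finally, a Fubini argument identical to the one closing the proof of Lemma~\ref{lem:chibmo} converts the integrability of $\exp(c|\mathcal{B}g|)$ into the statement that, for almost every $\chi$, the boundary function satisfies $\frac{1}{\pi}\int_\mathbb{R} e^{c|g_\chi(i\tau)|}\frac{d\tau}{1+\tau^2} =: C_\chi < \infty$.

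Next I would fix such a $\chi$ and exploit that $g_\chi \in H^1_{\mathrm{i}}(\mathbb{C}_0)$ is the Poisson integral of its boundary values, so that for $\sigma>0$
\[
|g_\chi(\sigma+it)| \le \int_\mathbb{R} |g_\chi(i\tau)|\,P_\sigma(t-\tau)\,d\tau, \qquad P_\sigma(u) = \frac{1}{\pi}\frac{\sigma}{\sigma^2+u^2}.
\]
Since $P_\sigma(t-\tau)\,d\tau$ is a probability measure, Jensen's inequality applied to $x \mapsto e^{cx}$ gives
\[
c\,|g_\chi(\sigma+it)| \le \log \int_\mathbb{R} e^{c|g_\chi(i\tau)|} P_\sigma(t-\tau)\,d\tau.
\]
To estimate the right-hand side I would factor out the weight $1+\tau^2$ and use the elementary bound $\sup_\tau (1+\tau^2)P_\sigma(t-\tau) \ll (1+t^2)/\sigma$, valid for $0 < \sigma \le 1/2$, which yields
\[
\int_\mathbb{R} e^{c|g_\chi(i\tau)|} P_\sigma(t-\tau)\,d\tau \ll \frac{1+t^2}{\sigma}\,C_\chi.
\]
Combining the last two displays, using $\frac{1+t^2}{\sigma} \le \big(\frac{1+|t|}{\sigma}\big)^2$ for $\sigma \le 1$ and $\frac{1+|t|}{\sigma} \ge 2$ to absorb the additive constant into the main term, produces $|g_\chi(\sigma+it)| \le C \log\frac{1+|t|}{\sigma}$ with $C$ depending only on $c$ and $C_\chi$, hence on $\chi$. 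This is exactly \eqref{eq:subtle}.

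The step I expect to require the most care is the passage from $\mathbb{C}_{1/2}$-information about $g$ to genuine holomorphy of $g_\chi$ in the full strip down to $\sigma = 0$: here the almost-sure convergence in $\mathbb{C}_0$ from \cite{HLS97} is essential, since $g$ itself need only converge in $\mathbb{C}_{1/2}$, and one must verify that the conformally invariant membership $g_\chi \in H^1_{\mathrm{i}}(\mathbb{C}_0)$ legitimately yields Poisson domination against the weighted boundary integral defining $C_\chi$. The remaining kernel estimate and the bookkeeping converting $\log\frac{1+t^2}{\sigma}$ into $\log\frac{1+|t|}{\sigma}$ are routine.
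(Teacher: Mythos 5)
Your argument is correct, but it takes a genuinely different route from the paper's. The paper also starts from part (b) of Theorem~\ref{thm:bmocond}, but instead of working with the exponential integrability of $\mathcal{B}g$ on $\mathbb{T}^\infty$ it uses that the four functions $e^{\pm cg}$ and $e^{\pm icg}$ lie in $\mathcal{H}^2$, and then quotes the almost-sure pointwise growth estimate of \cite[Thm.~4.2]{HLS97}, namely $|f_\chi(\sigma+it)-f(+\infty)|\le C(1+\sqrt{|t|})/\sigma$ on $\mathbb{C}_0$, for each of these four functions; taking logarithms of the resulting bounds on $e^{\pm c\mre g_\chi}$ and $e^{\pm c\operatorname{Im} g_\chi}$ yields \eqref{eq:subtle} immediately. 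Your proof replaces that citation by classical half-plane harmonic analysis: the flow-invariance/Fubini identity underlying \eqref{eq:avgrotemb} pushes the John--Nirenberg-type integrability of $\exp(c|\mathcal{B}g|)$ down to the boundary function of $g_\chi$ for almost every $\chi$, and then the Poisson representation of $g_\chi\in H^1_{\operatorname{i}}(\mathbb{C}_0)$ together with Jensen's inequality and the kernel bound $\sup_\tau(1+\tau^2)P_\sigma(t-\tau)\ll(1+t^2)/\sigma$ propagates the estimate into the strip. The two routes are of comparable length; yours is more self-contained on the function-theoretic side, but it does lean on the (standard, and worth stating explicitly) identification of the nontangential boundary values of $g_\chi$ with the restriction of $\mathcal{B}g$ to the Kronecker orbit of $\chi$ --- this is precisely what legitimizes both your Fubini step and the Poisson domination against the weighted boundary integral defining $C_\chi$. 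The paper extracts the same logarithm from the bound $(1+\sqrt{|t|})/\sigma$ rather than your $(1+t^2)/\sigma$, so the two arguments differ in the final constant only by absolute factors.
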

\begin{proof}
We assume that $\mathbf{T}_g$ is bounded on $\mathcal{H}^2$. Then by part (b) of Theorem~\ref{thm:bmocond},  there exists a positive number $c$ such that the four functions $e^{\pm cg}$ and $e^{\pm i cg}$ are in $\mathcal{H}^2$. Now let $f$ be any of these four functions. Then \cite[Thm.~4.2]{HLS97} shows that, for almost every character $\chi$,  there exists a constant $C$ (depending on $\chi$) such that
\[ |f_\chi(\sigma+i t)-f(+\infty)| \le C \frac{1+\sqrt{|t|}}{\sigma}\]
for every point $\sigma+it$ in $\mathbb{C}_0$.  Combining the acquired estimates for the four functions  $e^{\pm cg}$ and $e^{\pm i cg}$ and taking logarithms, we obtain the desired result.
	\end{proof}
Our bound \eqref{eq:subtle} shows that almost surely $|g_\chi|$ grows at most as general functions in  $\BMOA(\mathbb{C}_0)$ at the boundary of $\mathbb{C}_0$. It would be interesting to know if this result could be strengthened. For instance, is it true that $g_{\chi}$ almost surely satisfies the $\BMO$ condition locally, say on finite intervals, whenever $\mathbf{T}_g$ is bounded on $\mathcal{H}^2$? Note that we cannot hope to have the stronger result that $g_{\chi}$ is almost surely in  $\BMOA(\mathbb{C}_0)$. Indeed, the proof of part (a) of Theorem~\ref{thm:bmocond} gives that if $g_{\chi}$ is in 
	$\BMOA(\mathbb{C}_{\theta})$ for one character $\chi$, then this holds for all characters $\chi$. In view of this fact and what will be shown in Section~\ref{sec:homogen},  $g_\chi$ will in general be in $\BMOA(\mathbb{C}_{1/2})$ and in none of the  other spaces	$\BMOA(\mathbb{C}_{\theta})$ for $0\le \theta < 1/2$.

\subsection{An unbounded Dirichlet series in $\BMO$} The canonical example of an unbounded function in $\BMO(\mathbb{R})$ is $\log|t|$, the primitive of $1/t$. The Riemann zeta function $\zeta(s)$ is a meromorphic function with one simple pole, at $s=1$. We now show that the primitive of $-(\zeta(s) - 1)$ has bounded mean oscillation on the line $\sigma = 1$. In view of Theorem~\ref{thm:bmocond}, this supplies us with an example of a bounded $\mathbf{T}_g$-operator.
\begin{thm}
	\label{thm:zetabmo} The Dirichlet series
	\[g(s) := \sum_{n=2}^\infty \frac{1}{n\log{n}}n^{-s}.\]
	is in $\BMOA(\mathbb{C}_0)$. 
\end{thm}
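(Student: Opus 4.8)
The plan is to reduce the statement to the Carleson measure characterization of Lemma~\ref{lem:carlesonbmohalfplane} and then to exploit the very explicit structure of $g'$. Differentiating the given series termwise gives
\[
  g'(s) = -\sum_{n=2}^\infty \frac{1}{n}\,n^{-s} = -\bigl(\zeta(s+1)-1\bigr) = 1 - \zeta(s+1),
\]
which is holomorphic in $\mathbb{C}_0$ apart from the simple pole at $s=0$ that $g'$ inherits from the pole of $\zeta$ at $1$; there its principal part is exactly $-1/s$. By Lemma~\ref{lem:carlesonbmohalfplane} it therefore suffices to show that
\[
  \mu_g(s) = |g'(\sigma+it)|^2\,\sigma\,d\sigma\,\frac{dt}{1+t^2}
\]
is a Carleson measure for $H^2_{\operatorname{i}}(\mathbb{C}_0)$.

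To isolate the genuinely unbounded part of $g$, I would write $g'(s) = -1/s + h(s)$, where $h(s) = 1 - \bigl(\zeta(s+1) - 1/s\bigr)$ is entire, since $\zeta(s+1)-1/s$ extends holomorphically across $s=0$. The term $-1/s$ is the derivative of $-\log s$, the prototypical unbounded $\BMOA(\mathbb{C}_0)$ function (the half-plane incarnation of $\log|t|$, the primitive of $1/t$ highlighted at the start of this subsection); its measure is Carleson by Lemma~\ref{lem:carlesonbmohalfplane}, and this is exactly what accounts for the logarithmic oscillation of $g$ near $s=0$. Since $|g'|^2 \le 2|1/s|^2 + 2|h|^2$ and Carleson measures are closed under addition and domination, it remains to prove that $\mu_h := |h(\sigma+it)|^2\,\sigma\,d\sigma\,\frac{dt}{1+t^2}$ is Carleson. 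Here $h$ is bounded near the origin (the pole having been removed) and satisfies $|h(\sigma+it)| \ll \log(|t|+2)$ throughout $\mathbb{C}_0$ by the classical growth bound for $\zeta$ on and to the right of the line $\mre s = 1$.

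For the box condition I would split into small and large boxes. Boxes over intervals $I$ of length $H \le 1$, as well as any box meeting the origin, are harmless: the pointwise bound, respectively the boundedness of $h$ near $0$, forces $\mu_h(\text{box})$ to be of at most the required order. The delicate case is a box over an interval $I$ of length $H \ge 1$ centred at a large $|t_0|$, where the Carleson bound to be matched is $\int_I (1+t^2)^{-1}\,dt \asymp H/(1+t_0^2)$. The key point is that the pointwise estimate $\zeta(1+it) \ll \log|t|$ is \emph{too weak}, as it only yields a Carleson mass growing like $H^3$. Instead I would use that $1-\zeta(s+1) = -\sum_{n\ge 2} n^{-1-s}$ is an $L^2$-bounded Dirichlet series and invoke a vertical mean value (almost orthogonality) estimate
\[
  \int_I \bigl|1-\zeta(1+\sigma+it)\bigr|^2\,dt \ll H\,\bigl(\zeta(2+2\sigma)-1\bigr),
\]
valid for $H \ge 1$ with $t_0$ bounded away from $0$. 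The essential feature is the decay in $\sigma$, inherited from the Dirichlet series (the term $n=2$ dominating for large $\sigma$): because $\int_0^\infty \sigma\,\bigl(\zeta(2+2\sigma)-1\bigr)\,d\sigma$ converges, integrating the mean value estimate against $\sigma\,d\sigma$ gives $\mu_h(\text{box}) \ll H/(1+t_0^2)$, exactly the Carleson bound. Combining the regimes shows $\mu_h$, and hence $\mu_g$, is Carleson, so $g \in \BMOA(\mathbb{C}_0)$.

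I expect the large-box estimate to be the main obstacle. One must replace pointwise information about $\zeta$ near the $1$-line by a second-moment estimate that exhibits the correct decay in $\sigma$, and keep the error terms in the mean value estimate under control uniformly down to $\sigma$ near $0$, where the influence of the pole reappears.
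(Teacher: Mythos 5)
Your route is genuinely different from the paper's, which never passes through Carleson boxes: the paper verifies the $\BMO$ condition directly on the lines $\mre s=\varepsilon$, handling intervals of length $\ge 1$ by the local embedding \eqref{eq:localemb} and short intervals by splitting the series at $\log n=1/T$ and Taylor-expanding $\int_0^T|\sum_{\log n<1/T}b_nn^{-\varepsilon}(n^{-it}-1)|^2dt$ in $T$, where the decisive point is the cancellation of the top-order terms $(\log m)^{k-1}$ and $(\log n)^{k-1}$ in the coefficients. Your strategy (subtract the pole, reduce to a second-moment bound for $\zeta$ near the $1$-line) is viable, but the key inequality you assert is false as stated. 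The Montgomery--Vaughan mean value theorem gives, for an interval $I$ of length $H$ and after truncating the series at $x\asymp|t_0|$, the estimate $\int_I|\sum_{2\le n\le x}n^{-1-\sigma-it}|^2dt=H(\zeta(2+2\sigma)-1)+O\bigl(\sum_{2\le n\le x}n^{-1-2\sigma}\bigr)$, and the off-diagonal error, of size $\min(\log|t_0|,\sigma^{-1})$, is \emph{not} dominated by $H(\zeta(2+2\sigma)-1)$ when $\sigma$ is small and $H$ is moderate. Concretely, at $\sigma=0$ the large values $|\zeta(1+it)|\asymp\log\log t$ persist on subintervals of length $\gg1/\log\log t$, so there are unit intervals at height $t_0$ with $\int_I|1-\zeta(1+it)|^2dt\gg\log\log t_0$, while your right-hand side is $O(1)$. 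The argument can be repaired, because $\int_0^H\sigma\min(\log|t_0|,\sigma^{-1})\,d\sigma\ll H$, so the error still contributes only $O(H/(1+t_0^2))$ to the box mass; but this is exactly the step you defer, and what you wrote down in its place is an incorrect inequality rather than a delicate true one.

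Two further gaps. First, your claim that boxes with $H\le1$ are harmless by the pointwise bound $|h(\sigma+it)|\ll\log(|t|+2)$ fails in the regime $1/\log|t_0|\lesssim H\le1$: that bound gives box mass $\ll H^3(\log|t_0|)^2/(1+t_0^2)$, which exceeds the target $H/(1+t_0^2)$ there, so the mean-value input is needed for those boxes as well (it does cover them). Second, with the weight $\sigma\,d\sigma\,dt/(1+t^2)$ exactly as written in Lemma~\ref{lem:carlesonbmohalfplane}, the measures attached to $-1/s$ and to $h$ \emph{separately} have infinite mass on $\{\sigma>1,\,|t|\le1\}$, since each decays only like $1/|s|$ there while their sum $g'=1-\zeta(s+1)$ decays like $2^{-\sigma}$; a finite-mass requirement is forced on any Carleson measure by testing against constants. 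So the decomposition $g'=-1/s+h$ should be confined to a strip, say $0<\sigma<1$, with the region $\sigma\ge1$ estimated directly for $g'$ (where it is trivial).
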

\begin{proof}
	We will show that $g$ is in $\BMOA(\mathbb{C}_{\varepsilon})$, with $\BMO$-norm uniformly bounded in $\varepsilon>0$. Since $g(s-1/2)$ is in $\mathcal{H}^2$, we can use the local embedding as in the proof of Theorem~\ref{thm:bmocond} (c) to conclude that $g$ satisfies the $\BMO$-condition for intervals of length $|I|\geq1$.
	
	Focusing our attention on short intervals, we fix a real number $a$ and $0 < T < 1$ and set
	\[c := \sum_{\log n < 1/T} \frac{1}{n^{1+\varepsilon}\log{n}} n^{-ia}. \]
	To prove the theorem, we will show that
	\[ \int_a^{a+T} |g(\varepsilon + it) - c |^2 \, dt \leq CT \]
	where $C$ is a universal constant.
	
	Notice first that
	\[ \int_a^{a+T} |g(\varepsilon + it) - c |^2 \, dt = \int_0^T |\widetilde{g}(\varepsilon + it) - c |^2 \, dt, \]
	where
	\[\widetilde{g}(s) := \sum_{n=2}^\infty \frac{n^{-ia}}{n\log{n}}n^{-s}.\]
	Accordingly, set $b_n := n^{-ia}/(n\log{n})$. Then we have that 
	\begin{multline*}
		\left(\int_a^{a+T} |g(\varepsilon + it) - c |^2 \, dt\right)^{1/2} \leq \\
		\left(\int_0^T \Bigg|\sum_{\log n < 1/T} b_n n^{-\varepsilon}(n^{-it} - 1) \Bigg|^2 \, dt\right)^{1/2} + \left(\int_0^T \Bigg|\sum_{\log n > 1/T} b_n n^{-\varepsilon}n^{-it} \Bigg|^2 \, dt\right)^{1/2} .
	\end{multline*}
	To deal with the second term, we use the local embedding \eqref{eq:localemb} in a similar manner as above, using now that
	\[\int_0^T |f(1/2 + \varepsilon + it)|^2 \, dt \ll \|f\|_{\mathcal{H}^2}^2\]
	in this case, since $T < 1$. This gives us that 
	\begin{equation*}
		\int_0^T \Bigg|\sum_{\log n > 1/T} b_n n^{-\varepsilon}n^{-it} \Bigg|^2 \, dt \leq \sum_{\log n > 1/T} n |b_n|^2 \ll T, 
	\end{equation*}
	as desired.
	
	For the first term, we compute: 
	\begin{equation}
		\label{eq:firstpartineq} \int_0^T \Bigg|\sum_{\log n < 1/T} b_n n^{-\varepsilon}(n^{-it} - 1) \Bigg|^2 \, dt = \sum_{\substack{\log m < 1/T \\
		\log n < 1/T}} b_n \overline{b_m} (mn)^{-\varepsilon} h_{mn}(T), 
	\end{equation}
	where
	\[ h_{mn}(T) := \frac{(n/m)^{-iT} - 1}{i \log \frac{m}{n}} - \frac{n^{-iT} - 1}{i \log \frac{1}{n}} - \frac{(1/m)^{-iT} - 1}{i \log m} + T. \]
	We write $h_{mn}$ as a Taylor series in $T$, whence
	\[ h_{mn}(T) = \sum_{k=3}^\infty d_{mn}^k T^k, \]
	where
	\[ d_{mn}^k := \frac{(-i)^{k-1}}{k!} \left( \left(\log \frac{n}{m}\right)^{k-1} - (\log n)^{k-1} - \left(\log \frac{1}{m}\right)^{k-1} \right). \]
	The point is that in the coefficient $d_{mn}^k$, the terms of order $(\log m)^{k-1}$ and $(\log n)^{k-1}$ cancel. Estimating the remaining terms in a crude manner, we have that
	\[ |d_{mn}^k| \ll \frac{2^k}{k!} \sum_{j=1}^{k-2} (\log m)^{j} (\log n)^{k-j-1}. \]
	Note that for $1 \leq j \leq k-2$, we have
	\[ T^k \sum_{\substack{\log m < 1/T \\
	\log n < 1/T}} |b_n||b_m| (\log m)^j (\log n)^{k-j-1} \ll T. \]
	We observe that this inequality fails if $j=0$ or $j = k-1$, corresponding to the terms which disappear from $d^k_{mn}$.
	
	Combining these estimates with \eqref{eq:firstpartineq} we obtain 
	\[\int_0^T \Bigg|\sum_{\log n < 1/T} b_n n^{-\varepsilon}(n^{-it} - 1) \Bigg|^2 \, dt \ll T\]
	also for the first term, concluding the proof. 
\end{proof}

\section{Multiplicative symbols} \label{sec:multip} In this section, we study symbols of the form 
\begin{equation}
	\label{eq:multsymb} g(s) =  \sum_{n=2}^\infty \frac{\psi(n)}{\log{n}} n^{-s}, 
\end{equation}
where $\psi(n)$ is a positive multiplicative function. We know from the previous section that if $\psi(n) = n^{-1}$, then $g$ is in $\BMOA(\mathbb{C}_0)$, and therefore $g$ is in $\mathcal{X}$. We begin by considering the distinguished case when the function $\psi(n)$ corresponds to horizontal shifts of the Riemann zeta function. To be more precise, our first task will be to show that $g$ is not in $\mathcal{X}$ when $g$ is the function in $\BMOA(\mathbb{C}_{1-\alpha})$ with coefficients given by $\psi(n) = n^{-\alpha}$ and $1/2 \leq \alpha < 1$. In particular, this means that the Dirichlet series $g(s) = \sum_{n\geq2}1/(\sqrt{n}\log{n}) n^{-s}$, identified in \cite{BPSSV} as the symbol of the multiplicative analogue of Hilbert's matrix and shown there to generate a bounded multiplicative Hankel form, is indeed far from belonging to $\mathcal{X}$, as it corresponds to the case $\alpha=1/2$.

In this section and the next, we will be working at the level of coefficients. Observe that if $f(s) = \sum_{n\geq 1} a_n n^{-s}$ and $g(s) = \sum_{n \geq 2} b_n/(\log n) n^{-s}$, then
\[\mathbf{T}_gf(s)= \sum_{n=2}^\infty \frac{1}{ \log n} \Bigg(\sum_{k|n \atop k<n} a_k b_{n/k} \Bigg) n^{-s}. \]
Since the operator \[a_1+\sum_{n=2}^\infty a_n n^{-s}\quad \mapsto \quad a_1+\sum_{n=2}^\infty \frac{a_n}{\log{n}}n^{-s}\] is trivially bounded and even compact on $\mathcal{H}^2$, we will sometimes tacitly replace $\mathbf{T}_g$ with $\widetilde{\mathbf{T}}_g$,
\[\widetilde{\mathbf{T}}_gf(s) := \sum_{n=2}^\infty \frac{1}{ \log n} \Bigg(\sum_{k|n} a_k b_{n/k} \Bigg) n^{-s}, \]
where it is understood that $b_1 = 1$. 
\begin{thm}
	\label{thm:zeta} $\mathbf{T}_g$ is unbounded when $g$ is the primitive of $\zeta(s+\alpha)-1$ and $\alpha<1$. 
\end{thm}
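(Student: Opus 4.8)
The plan is to prove unboundedness on $\mathcal{H}^2$ by a testing argument, producing Dirichlet polynomials $f$ for which $\|\mathbf{T}_g f\|_{\mathcal{H}^2}/\|f\|_{\mathcal{H}^2}$ is arbitrarily large. Here $g(s)=\sum_{n\ge2}\frac{n^{-\alpha}}{\log n}n^{-s}$, so $g'(s)=-(\zeta(s+\alpha)-1)$ and, for $f(s)=\sum_k a_k k^{-s}$, the coefficient formula from the start of this section reads $\mathbf{T}_gf(s)=\sum_{n\ge2}\frac{\beta_n}{\log n}n^{-s}$ with $\beta_n=\sum_{km=n,\,m\ge2}a_k m^{-\alpha}$. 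The case $\alpha<1/2$ is immediate, since then $\|g\|_{\mathcal{H}^2}^2=\sum_{n\ge2}n^{-2\alpha}(\log n)^{-2}=\infty$, and $\mathbf{T}_g1=g$ shows $\mathbf{T}_g$ is not even defined on all of $\mathcal{H}^2$. So I assume $1/2\le\alpha<1$.

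The crux is to neutralize the damping factor $(\log n)^{-2}$ by constraining both the support of $f$ and the range of output frequencies. Fix a large $y$, put $x=\prod_{p\le y}p$, and let $f(s)=\sum_{k\in\mathcal N}a_k k^{-s}$, where $\mathcal N$ is the set of squarefree integers built from the primes $p\le y$, so every $k\in\mathcal N$ satisfies $k\le x$. Fixing $C>2$ and discarding all output frequencies $n>x^C$, where $\log n\le C\log x$, gives
\[ \|\mathbf{T}_g f\|_{\mathcal{H}^2}^2\ \ge\ \frac{1}{(C\log x)^2}\sum_{2\le n\le x^C}|\beta_n|^2. \]
Writing $\widetilde\beta_n=\sum_{k\mid n,\,k\in\mathcal N}a_k(n/k)^{-\alpha}$ for the full convolution (so $\widetilde\beta_n-\beta_n$ is $a_n$ if $n\in\mathcal N$ and $0$ otherwise), one has $\sum_{n\le x^C}|\beta_n|^2\ge\tfrac12\sum_{n\le x^C}|\widetilde\beta_n|^2-\|f\|_{\mathcal{H}^2}^2$. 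Expanding $|\widetilde\beta_n|^2$, interchanging sums, and estimating the inner tails $\sum_{j\le x^C/\operatorname{lcm}(k,l)}j^{-2\alpha}$ (which, because $\operatorname{lcm}(k,l)\le x^2$ and $C>2$, equal $\zeta(2\alpha)(1+o(1))$ when $\alpha>1/2$ and are $\asymp\log x$ when $\alpha=1/2$, uniformly in $k,l$) yields
\[ \sum_{n\le x^C}|\widetilde\beta_n|^2\ \asymp\ W(x)\sum_{k,l\in\mathcal N}a_k\overline{a_l}\,\frac{\gcd(k,l)^{2\alpha}}{(kl)^{\alpha}}, \qquad W(x)\asymp\begin{cases}\zeta(2\alpha),&\alpha>1/2,\\[2pt]\log x,&\alpha=1/2.\end{cases} \]

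The quadratic form on the right is exactly a G\'al-type $\gcd$-sum, and this is where $\alpha<1$ is decisive. With the uniform weights $a_k\equiv1$ on $\mathcal N$ the form factors over the primes,
\[ \sum_{k,l\in\mathcal N}\frac{\gcd(k,l)^{2\alpha}}{(kl)^\alpha}=\prod_{p\le y}\bigl(2+2p^{-\alpha}\bigr)=\Bigl(\prod_{p\le y}(1+p^{-\alpha})\Bigr)\|f\|_{\mathcal{H}^2}^2, \]
since $\|f\|_{\mathcal{H}^2}^2=|\mathcal N|=2^{\pi(y)}$. By the prime number theorem $\log x=\sum_{p\le y}\log p\sim y$ and $\sum_{p\le y}p^{-\alpha}\sim\frac{y^{1-\alpha}}{(1-\alpha)\log y}$, whence $\prod_{p\le y}(1+p^{-\alpha})=\exp\!\bigl((1+o(1))\tfrac{y^{1-\alpha}}{(1-\alpha)\log y}\bigr)$ grows faster than any power of $y\sim\log x$. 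Combining the displays,
\[ \frac{\|\mathbf{T}_g f\|_{\mathcal{H}^2}^2}{\|f\|_{\mathcal{H}^2}^2}\ \gg\ \frac{W(x)}{(\log x)^2}\prod_{p\le y}(1+p^{-\alpha}), \]
and the right-hand side tends to $\infty$ as $y\to\infty$, proving that $\mathbf{T}_g$ is unbounded.

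The main work lies in the middle step: the truncation bookkeeping, and in particular the borderline $\alpha=1/2$, where $\zeta(2\alpha)$ diverges and is replaced by the factor $\log x$ generated by the cutoff $n\le x^C$ (so that the argument goes through uniformly for all $1/2\le\alpha<1$). As a sanity check, the estimate degenerates exactly at $\alpha=1$: there Mertens' theorem gives $\prod_{p\le y}(1+1/p)\asymp\log y\asymp\log\log x$, so the ratio bound becomes $\asymp (\log\log x)(\log x)^{-2}\to0$, consistent with $g$ belonging to $\mathcal X$ when $\alpha=1$. I expect that a sharper choice of weights, via the G\'al--Hilberdink analysis of these $\gcd$-sums, is what will be needed for the finer threshold results of the next theorem, but the elementary Euler-product computation above already suffices for the qualitative statement here.
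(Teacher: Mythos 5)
Your proof is correct. It is built on the same test function as the paper's proof --- $f(s)=\prod_{p\le y}(1+p^{-s})$, i.e.\ the characteristic function of the $y$-smooth squarefree integers --- but the way you extract the lower bound for $\|\mathbf{T}_gf\|_{\mathcal{H}^2}$ is genuinely different. The paper computes the output coefficient \emph{exactly} at each squarefree frequency $n_{\mathcal J}=\prod_{j\in\mathcal J}p_j$, namely $\sum_{k\mid n_{\mathcal J}}a_kk^{\alpha}=n_{\mathcal J}^{\alpha}\prod_{j\in\mathcal J}(1+p_j^{-\alpha})$, restricts to the $2^{J-1}$ subsets with $|\mathcal J|\ge J/2$, and bounds the minimum of $(\log n_{\mathcal J})^{-2}\prod_{j\in\mathcal J}(1+p_j^{-\alpha})^2$ from below; this is a three-line computation needing no truncation or diagonal correction. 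You instead expand the full second moment over all output frequencies $n\le x^C$, which forces you to (i) control the damping factor $(\log n)^{-2}$ by the cutoff $x^C$, (ii) handle the $m=1$ diagonal term separately, and (iii) evaluate the resulting G\'al-type quadratic form $\sum_{k,l}\gcd(k,l)^{2\alpha}(kl)^{-\alpha}$ as an Euler product, with a separate treatment of the borderline $\alpha=1/2$ where the inner sum $\sum j^{-2\alpha}$ diverges. Both routes land on the same growth $\exp\bigl((1+o(1))\,y^{1-\alpha}/((1-\alpha)\log y)\bigr)$ against $(\log x)^2\asymp y^2$. What your version buys is a direct bridge to the gcd-sum machinery of G\'al and Hilberdink that the paper only deploys in the next theorem (the $\lambda$-scale of multiplicative symbols), and your dichotomy $\alpha<1/2$ versus $1/2\le\alpha<1$ is a clean (if unnecessary) shortcut, since $g\notin\mathcal{H}^2$ already settles $\alpha<1/2$; what the paper's version buys is brevity, as the exact factorization of the coefficients makes all the bookkeeping in your middle step unnecessary.
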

\begin{proof}
	If $f(s)=\sum_{n\geq1} a_n n^{-s}$, then with the convention just described, we have that
	\[\mathbf{T}_g f(s)= \sum_{n=2}^\infty \frac{1}{n^{\alpha} \log n} \sum_{k| n} a_k k^\alpha n^{-s}.\]
	We now choose $f(s)=\prod_{j=1}^{J} (1+p_j^{-s})$, which satisfies $\| f\|_{\mathcal{H}^2}=2^{J/2}$. Let $\mathcal{J}$ be a subset of $\{1,...,J\}$. 
	
	Choosing $n=n_{\mathcal{J}}$, where
	\[n_{\mathcal{J}}:=\prod_{j\in \mathcal{J}} p_j, \]
	we see that
	\[\sum_{k|n_{\mathcal{J}}} a_k k^\alpha = n_{\mathcal{J}}^{\alpha} \prod_{j\in \mathcal{J}} (1+p_j^{-\alpha}).\]
	It follows that
	\[\big\|\mathbf{T}_g f\big\|_{\mathcal{H}^2}^2=\sum_{\mathcal{J}\neq \emptyset} \frac{1}{(\log n_{\mathcal{J}})^2} \prod_{j\in \mathcal{J}} (1+p_j^{-\alpha})^2,\]
	which gives
	\[\big\|\mathbf{T}_g f \big\|_{\mathcal{H}^2}^2\geq 2^{J-1} \min_{|\mathcal{J}|\geq J/2} \frac{1}{(\log n_{\mathcal{J}})^2} \prod_{j\in \mathcal{J}} (1+p_j^{-\alpha})^2.\]
	We conclude that
	\[\big\|\mathbf{T}_g f \big\|_{\mathcal{H}^2}^2\gg e^{c J^{1-\alpha}(\log J)^{-\alpha}} \| f\|_{\mathcal{H}^2}^2 \]
	for an absolute constant $c$. 
\end{proof}
The preceding clarification of the case of horizontal shifts of primitives of the Riemann zeta function motivates a more careful examination of what we need to require from the multiplicative function $\psi(n)$ in \eqref{eq:multsymb} for $g$ to belong to $\mathcal{X}$. We will now see that a surprisingly precise answer can be given if we make a slight modification of the Euler product associated with $\zeta(s)$. 

We will need the following simple decomposition of bounded $\mathbf{T}_g$-operators. Let $M_{h,x}$ denote the truncated multiplier associated with $h(s)=\sum_{n\geq1} c_n n^{-s}$ and $x\geq 1$:
\[M_{h,x}f(s):=\sum_{n\leq x} \Bigg(\sum_{k|n} c_k a_{n/k}\Bigg) n^{-s}, \]
where $f(s)=\sum_{n\geq1} a_n n^{-s}$. We observe that $M_{h,x}$ acts boundedly on $\mathcal{H}^2$ for every Dirichlet series $h$, but the point of interest is to understand how the norm of $M_{h,x}$ grows with $x$. Truncated multipliers are linked to $\mathbf{T}_g$ by the following lemma. 
\begin{lem}
	\label{dyadic} Suppose that $\mathbf{T}_g$ acts boundedly on $\mathcal{H}^2$. Then
	\[ \frac{3}{4} \sum_{k=0}^\infty 4^{-k} \big\| M_{g',{e}^{2^k}} f\big\|_{\mathcal{H}^2}^2 \leq \| \mathbf{T}_g f \|_{\mathcal{H}^2}^2 \leq 4 \sum_{k=0}^\infty 4^{-k} \big\| M_{g',{e}^{2^k}} f\big\|_{\mathcal{H}^2}^2 \]
	for every $f$ in $\mathcal{H}^2$. 
\end{lem}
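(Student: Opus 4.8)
The plan is to reduce the whole statement to an elementary scalar inequality about the Dirichlet coefficients of the product $f g'$. Writing $f(s) = \sum_{n\ge 1} a_n n^{-s}$, let $d_N$ denote the $N$-th coefficient of $f g'$, i.e. $d_N = \sum_{k \mid N} a_{N/k}(g')_k$ where $(g')_k$ is the $k$-th coefficient of $g'$; since $(g')_1 = 0$ we have $d_1 = 0$. Integrating \eqref{eq:vintop} term by term (legitimate because $d_1=0$ removes the divergent $N=1$ term) gives $\mathbf{T}_g f(s) = -\sum_{N\ge 2} \tfrac{d_N}{\log N} N^{-s}$, so by the coefficient formula for the $\mathcal{H}^2$-norm,
\[\|\mathbf{T}_g f\|_{\mathcal{H}^2}^2 = \sum_{N\ge 2} \frac{|d_N|^2}{(\log N)^2}.\]
Directly from the definition of the truncated multiplier, $M_{g', x} f(s) = \sum_{N\le x} d_N N^{-s}$, so that $\|M_{g', e^{2^k}} f\|_{\mathcal{H}^2}^2 = \sum_{N\le e^{2^k}} |d_N|^2$. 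The hypothesis that $\mathbf{T}_g$ is bounded ensures the first sum is finite for every $f$ in $\mathcal{H}^2$.

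Next I would interchange the order of summation in the weighted sum, which is legitimate since all terms are nonnegative (Tonelli), obtaining
\[\sum_{k=0}^\infty 4^{-k}\, \big\|M_{g', e^{2^k}} f\big\|_{\mathcal{H}^2}^2 = \sum_{N\ge 2} |d_N|^2 \sum_{\substack{k\ge 0 \\ 2^k \ge \log N}} 4^{-k}.\]
Letting $k_0 = k_0(N)$ be the least nonnegative integer with $2^{k_0}\ge \log N$ and summing the inner geometric series, the right-hand side becomes $\tfrac{4}{3}\sum_{N\ge 2} |d_N|^2\, 4^{-k_0(N)}$.

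It then remains to compare the dyadic weight $4^{-k_0(N)}$ with $(\log N)^{-2}$. By minimality of $k_0$ one has $2^{k_0 - 1} < \log N \le 2^{k_0}$ (this also holds at the smallest value $N=2$, where $k_0=0$ and $\tfrac12 < \log 2 \le 1$), and squaring gives $(\log N)^2 \le 4^{k_0} < 4(\log N)^2$, that is,
\[\frac{1}{4(\log N)^2} < 4^{-k_0(N)} \le \frac{1}{(\log N)^2}.\]
Inserting the upper bound yields $\tfrac{3}{4}\sum_{k} 4^{-k}\|M_{g', e^{2^k}} f\|_{\mathcal{H}^2}^2 = \sum_N |d_N|^2 4^{-k_0} \le \|\mathbf{T}_g f\|_{\mathcal{H}^2}^2$, which is exactly the left inequality; inserting the lower bound gives $\sum_{k} 4^{-k}\|M_{g', e^{2^k}} f\|_{\mathcal{H}^2}^2 > \tfrac{1}{3}\|\mathbf{T}_g f\|_{\mathcal{H}^2}^2$, hence $\|\mathbf{T}_g f\|_{\mathcal{H}^2}^2 < 3\sum_{k} 4^{-k}\|M_{g', e^{2^k}} f\|_{\mathcal{H}^2}^2$, which gives the right inequality with room to spare (the stated constant $4$ is not tight).

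The calculation is essentially routine, and there is no serious obstacle. The only points demanding care are the correct bookkeeping of indices — the truncation level $e^{2^k}$ is measured in $N$, the index of the coefficients of the product $f g'$, whereas the factor $(\log N)^{-2}$ arises from the integration defining $\mathbf{T}_g$ — and the verification of the two-sided estimate on $4^{-k_0(N)}$ at the smallest integers $N$, where $\log N < 1$ forces $k_0 = 0$ and the generic inequality $2^{k_0-1} < \log N$ must be checked by hand.
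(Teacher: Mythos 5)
Your proof is correct and follows essentially the same route as the paper: both start from the coefficient identity $\|\mathbf{T}_g f\|_{\mathcal{H}^2}^2 = \sum_{N\ge 2}|d_N|^2(\log N)^{-2}$ and compare $(\log N)^{-2}$ with the dyadic weight determined by $2^{k-1}<\log N\le 2^{k}$; the paper organizes this as a block decomposition plus a telescoping identity for the truncated multipliers, while you interchange the order of summation and compute the per-coefficient weight $\tfrac{4}{3}4^{-k_0(N)}$ directly. The two computations are equivalent (summation by parts versus Tonelli), and your version even yields the slightly sharper constant $3$ in place of $4$ on the right-hand side.
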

\begin{proof}
	We start from the expression
	\[ \|\mathbf{T}_g f \|_{\mathcal{H}^2}^2 =\sum_{n=2}^\infty \frac{1}{(\log n)^2} \Bigg|\sum_{k|n} b_k (\log k) a_{n/k}\Bigg|^2, \]
	which we split into blocks in the following way:
	\[ \sum_{k=0}^{\infty} \frac{1}{4^k} \sum_{{e}^{2^{k-1}}< n \leq {e}^{2^k}} \Bigg|\sum_{k|n} b_k (\log k) a_{n/k}\Bigg|^2 \leq \|\mathbf{T}_g f \|_{\mathcal{H}^2}^2 \leq 4 \sum_{k=0}^{\infty} \frac{1}{4^k} \sum_{{e}^{2^{k-1}}< n \leq {e}^{2^k}} \Bigg|\sum_{k|n}b_k (\log k) a_{n/k}\Bigg|^2. \]
	The upper bound is immediate from the right inequality, and the lower bound follows from the left inequality and the fact that
	\[\sum_{e^{2^{k-1}}< n \leq {e}^{2^k}} \Bigg|\sum_{k|n} b_k a_{n/k}\Bigg|^2=\big\|M_{g',{e}^{2^k}} f \big\|_{\mathcal{H}^2}^2 - \big\|M_{g',{e}^{2^{k-1}}} f \big\|_{\mathcal{H}^2}^2.\qedhere\]
\end{proof}
The preceding lemma, which says that $\mathbf{T}_g$ is bounded whenever the norm of $M_{g',x}$ grows roughly as $\log x$, connects the study of $\mathbf{T}_g$ to the truncated multipliers considered by Hilberdink \cite{Hilberdink09} in a purely number theoretic context. Based on this observation, we shall now present a natural scale of multiplicative symbols $g_\lambda$, where $0 < \lambda < \infty$, such that $g_\lambda$ induces a bounded $\mathbf{T}_g$-operator if and only if $\lambda \leq 1$. We shall later see, in Section~\ref{sec:RPK}, that $\mathbf{T}_{g_\lambda}$ is non-compact for the pivotal point $\lambda=1$.
\begin{thm}
	\label{thmsuf} For $0 < \lambda < \infty$, let $g$ be the Dirichlet series \eqref{eq:multsymb}, where $\psi(n)$ is the completely multiplicative function defined on the primes by $\psi(p): = \lambda p^{-1}(\log p)$. Then $\mathbf{T}_g$ is bounded if and only if $\lambda\leq1$. 
\end{thm}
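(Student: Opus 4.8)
The plan is to pass to the coefficient level and reduce everything to the growth of the truncated multipliers $M_{g',x}$ furnished by Lemma~\ref{dyadic}. Since $\psi$ is completely multiplicative with $\psi(p)=\lambda p^{-1}\log p$, we have $\psi(n)=\lambda^{\Omega(n)}n^{-1}\prod_{p^a\|n}(\log p)^{a}$ and $g'(s)=-\sum_{n\geq2}\psi(n)n^{-s}$. Thus $M_{g',x}$ is, up to the compact discrepancy between $\mathbf{T}_g$ and $\widetilde{\mathbf{T}}_g$, the Dirichlet convolution by $\psi$ cut off at $n\leq x$, i.e.\ the truncation to $n\leq x$ of multiplication by the Euler product $\prod_p(1-\psi(p)p^{-s})^{-1}$. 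The whole theorem then hinges on a single sharp estimate for the operator norm of this truncated multiplier, and the index $\lambda=1$ should emerge as the exact threshold from the interplay of this norm with the weights $4^{-k}$ in Lemma~\ref{dyadic}.

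The heart of the argument, and the main obstacle, is therefore to prove that
\[\|M_{g',x}\|_{\mathcal{H}^2\to\mathcal{H}^2}\asymp\prod_{p\leq z}\big(1-\psi(p)\big)^{-1}\asymp(\log x)^{\lambda},\qquad z\asymp\log x,\]
where the second relation follows from Mertens' estimate $\sum_{p\leq z}\psi(p)=\lambda\sum_{p\leq z}p^{-1}\log p=\lambda\log z+O(1)$, the higher Euler factors contributing only $O(1)$. For the upper bound I would run a Schur test for the matrix $\psi(n/d)$ with a completely multiplicative weight $h$, the point being that the row and column sums factor into local Euler factors $\sum_{i\geq0}(\psi(p)/h(p))^{i}$, while the constraint $n\leq x$ limits the admissible integers to those built from primes $p\lesssim\log x$; this is precisely the type of truncated-multiplier estimate studied by Hilberdink \cite{Hilberdink09}. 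For the lower bound I would test against an extremal vector supported on integers whose prime factors lie below $z\asymp\log x$: because $\psi$ is completely multiplicative the convolution factors as a tensor product over primes, each local factor being multiplication by $(1-\psi(p)p^{-s})^{-1}$ of norm $(1-\psi(p))^{-1}$, so that including the full prime-power towers, not merely the squarefree part, is what produces the exponent $\lambda$ rather than $\lambda/2$. Pinning the constant in the exponent \emph{exactly} at $\lambda$, rather than $\lambda\pm\varepsilon$, is where the sharp GCD-sum constant of G\'al \cite{Gal49} enters, and this sharpness is indispensable for the borderline case $\lambda=1$.

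Granting the norm estimate, both implications follow from Lemma~\ref{dyadic}. For necessity, suppose $\lambda>1$. Keeping only the single block $k_0$ with $2^{k_0}\asymp\log x$ in the lower bound of Lemma~\ref{dyadic}, and choosing $f$ to be the extremal vector at scale $x$, gives
\[\|\mathbf{T}_g f\|_{\mathcal{H}^2}^2\geq\tfrac34\,4^{-k_0}\big\|M_{g',x}f\big\|_{\mathcal{H}^2}^2\gg(\log x)^{-2}(\log x)^{2\lambda}\|f\|_{\mathcal{H}^2}^2=(\log x)^{2\lambda-2}\|f\|_{\mathcal{H}^2}^2,\]
and the right-hand factor tends to infinity as $x\to\infty$ precisely because $\lambda>1$; hence $\mathbf{T}_g$ is unbounded. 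For sufficiency with $\lambda<1$, the crude summation suffices: the upper bound of Lemma~\ref{dyadic} gives $\|\mathbf{T}_gf\|_{\mathcal{H}^2}^2\leq4\sum_{k}4^{-k}\|M_{g',e^{2^k}}\|^2\|f\|_{\mathcal{H}^2}^2\ll\|f\|_{\mathcal{H}^2}^2\sum_k 2^{k(2\lambda-2)}<\infty$.

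The remaining delicate point is the borderline $\lambda=1$, where this crude summation diverges logarithmically even though $\mathbf{T}_g$ is in fact bounded (and, as announced for Section~\ref{sec:RPK}, merely non-compact). Here one cannot bound the blocks by operator norms separately; instead I would estimate the full quadratic form $\sum_k 4^{-k}\langle M_{g',e^{2^k}}^\ast M_{g',e^{2^k}}f,f\rangle$, which by the computation in Lemma~\ref{dyadic} equals, up to constants, $\sum_{n\geq2}(\log n)^{-2}|(\psi\ast a)(n)|^2$, and exploit that a single $f$ cannot saturate the extremal bound simultaneously across all scales. Quantitatively this amounts to showing that the mass of the resonance is spread over scales, so that the $\lambda=1$ sharp form of the G\'al--Hilberdink estimate yields convergence rather than the divergence predicted by the worst-case block bound. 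This scale-spreading phenomenon, reflecting that $\lambda=1$ is exactly the edge, is the subtlest part of the proof.
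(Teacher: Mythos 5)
Your overall strategy---reduce to the truncated multipliers $M_{g',x}$ via Lemma~\ref{dyadic}, prove an upper bound of Hilberdink type by a Schur/Cauchy--Schwarz test with a multiplicative weight, and a lower bound of G\'al type by testing against a smooth vector with prime-power towers---is exactly the paper's strategy, and your treatment of the cases $\lambda<1$ and $\lambda>1$ is correct in outline. (Two small remarks: the paper does not need, and does not prove, the two-sided asymptotic $\|M_{g',x}\|\asymp(\log x)^{\lambda}$; for $\lambda>1$ it only establishes $\|M_{g',x}f\|/\|f\|\gg(\log x)^{\lambda'}$ for some $\lambda'\in(1,\lambda)$, which suffices, and your observation that the towers are what lift the exponent from $\lambda/2$ to $\lambda$ matches the paper's local factor $1+\tfrac{\ell}{\ell+1}\psi(p)$. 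Also, the G\'al sharpness is needed only for the unboundedness when $\lambda>1$, not for the boundedness at $\lambda=1$ as you suggest.)

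The genuine gap is the borderline case $\lambda=1$, which is the entire point of the theorem. You correctly diagnose that the block-by-block bound $\sum_k 4^{-k}\|M_{g',e^{2^k}}\|^2$ diverges logarithmically and that one must ``spread the mass over scales,'' but you offer no mechanism for doing so; ``a single $f$ cannot saturate the extremal bound simultaneously across all scales'' is a hope, not an argument. The paper's mechanism is concrete: a second auxiliary weight $h_x(n)$ is inserted into the Cauchy--Schwarz step, chosen so that $\sup_m\sum_{k:\,e^{2^k}\ge m}h_{e^{2^k}}(m)<\infty$ (so each coefficient $|a_m|^2$ contributes boundedly when summed over all blocks containing it), at the price of having to show that the dual quantity $\Phi_h(m)=\sum_{d\mid m}\psi(d)/\bigl(\varphi(d)h_x(m/d)\bigr)$ still satisfies $\Phi_h(m)\ll\log x$ uniformly in $m\le x$. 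That verification is the bulk of the work: it splits the divisor sum at $d\le\sqrt m$ versus $d\ge\sqrt m$, uses the improved bound $\Phi(m)\le\exp(\log_2 x-\log(1/\beta)+O(1))$ for $m=x^{\beta}$, and handles the large divisors by Rankin's trick with $\varepsilon=4\log_3 x/\log m$. Without supplying this (or an equivalent) argument, your proof establishes only $\lambda<1\Rightarrow$ bounded and $\lambda>1\Rightarrow$ unbounded, leaving the edge case open.
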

\begin{proof}
	We begin with the case $\lambda<1$, for which we adapt the proof of \cite[Thm.~2.3]{Hilberdink09}. Hence we let $\varphi(n)$ be an arbitrary positive arithmetic function and note that the Cauchy--Schwarz inequality implies that
	\[\| M_{g',x} f \|_{\mathcal{H}^2}^2 =\sum_{n\leq x} \Bigg| \sum_{d|n} \psi(d) a_{n/d} \Bigg|^2 \leq \sum_{n\leq x} \sum_{d|n} \frac{\psi(d)}{\varphi(d)} \sum_{k|n} \psi(k) \varphi(k) |a_{n/k}|^2. \]
	We therefore find that 
	\begin{equation}
		\label{Hilber} \| M_{g',x} \|^2_{\mathcal{H}^2} \leq \sum_{n\leq x} \varphi(n)\psi(n) \max_{m\leq x} \sum_{d|m} \frac{\psi(m)}{\varphi(m)}. 
	\end{equation}
	We now require that $\varphi$ be a multiplicative function satisfying
	\[ \varphi(p^k):= 
	\begin{cases}
		1, & p \leq M, \\
		K \sum_{r=1}^\infty \psi(p^r), & p >M, 
	\end{cases}
	\]
	where the positive parameters $K$ and $M$ will be determined later. We find that 
	\begin{align*}
		\sum_{n\leq x} \varphi(n)\psi(n) & \leq \prod_p \Bigg(1+\sum_{k=1}^{\infty}\varphi(p^k)\psi(p^k)\Bigg) \leq \exp\Bigg(\sum_{p\leq M} \sum_{k=1}^\infty \psi(p^k) + K \sum_{p>M} \Bigg(\sum_{k=1}^\infty \psi(p^k) \Bigg)^2\Bigg) \\
		& = \exp\Bigg(\sum_{p\leq M} \frac{\lambda p^{-1}\log p}{1-\lambda p^{-1} \log p} + K \sum_{p>M} \frac{\lambda^2 p^{-2}(\log p)^2}{(1-\lambda p^{-1} \log p)^2} \Bigg). 
	\end{align*}
	By Abel summation and the prime number theorem in the form
	\[ \pi(y)= \frac{y}{\log y}+\frac{y}{(\log y)^2}+O\left(\frac{y}{(\log y)^3}\right), \]
	we infer that 
	\begin{equation}
		\label{factor1} \sum_{n\leq N} \varphi(n)\psi(n) \leq \exp\left(\lambda \log M +O(1)+O\left(K \frac{\log M}{M}\right) \right). 
	\end{equation}
	We now turn to the second factor on the right-hand side of \eqref{Hilber}. We then use that also
	\[ \Phi(m):= \sum_{d|m} \frac{\psi(d)}{\varphi(d)} \]
	is a multiplicative function. We observe that
	\[ \Phi(p^k)=\sum_{r=0}^k \frac{\psi(p^r)}{\varphi(p^r)} \leq 
	\begin{cases}
		1+ \sum_{r=1}^{\infty} \psi(p^r), & p\leq M, \\
		1+K^{-1}, & p>M. 
	\end{cases}
	\]
	Consequently 
	\begin{equation}
		\label{factor2} \Phi(m)\leq \prod_{p\leq M} \Bigg(1+ \sum_{r=1}^{\infty} \psi(p^r)\Bigg) \left(1+K^{-1}\right)^{\omega(m)}\leq \exp\left(\lambda \log M + O(1) + O\left(K^{-1}\frac{\log x}{\log_2 x}\right) \right), 
	\end{equation}
	where we used that $\omega(m) \ll \log(m)/\log_2(m)$. If we now choose $M=\log x$, $K=(\log x)/\log_2 x $, and insert \eqref{factor1} and \eqref{factor2} into \eqref{Hilber}, then we find that
	\[ \| M_{g',x} \|^2 \leq C (\log x)^{2\lambda}. \]
	Finally, we invoke Lemma~\ref{dyadic} and conclude that $\mathbf{T}_g$ is bounded whenever $\lambda<1$.
	
	To show that $\mathbf{T}_g$ is bounded when $\lambda=1$ we modify the proof. In addition to the function $\varphi(n)$, we use another auxiliary function $h_x(n)$ and use the Cauchy--Schwarz inequality to obtain
	\[ \| M_{g',x} f \|_{\mathcal{H}^2}^2 =\sum_{n\leq x} \Bigg| \sum_{d|n} \psi(d) a_{n/d} \Bigg|^2 \leq \sum_{n\leq x} \sum_{d|n} \frac{\psi(d)}{\varphi(d)h_x(n/d)} \sum_{k|n} \psi(k) \varphi(k) |a_{n/k}|^2 h_x(n/k). \]
	We require from $h_x(n)$ that
	\[ \sup_m \sum_{e^{2^k}\geq m} h_{e^{2^k}}(m)<\infty. \]
	This will ensure boundedness if we can prove that
	\[ \Phi_h(m):= \sum_{d|m} \frac{\psi(d)}{\varphi(d)h_x(m/d)} \]
	enjoys the same uniform bound as that we found for $\Phi(m)$ for a suitable $h_x(n)$. To this end, we choose
	\[ h_x(n)= 
	\begin{cases}
		1, & \sqrt{x}< n \le x, \\
		\exp\left(-2\log_2 \frac{\log x}{\log n+1} \right), & 1\leq n \le \sqrt{x}, 
	\end{cases}
	\]
	which implies that
	\[ \Phi_h(m) \le \Phi(m) e^{2\log_3 x}\le \exp\left(\log_2 m + 2\log_3 x +O(1)\right). \]
	This means that in what follows, we may assume that $\log(m)\geq (\log{x})/(\log_2{x})^2$. Using again the definition of $h_x(n)$, we also obtain, for $\delta > 0$, 
	\begin{equation}
		\label{smalld} \sum_{d|m \atop m/d \ge x^{\delta}} \frac{\psi(d)}{\varphi(d)h_x(m/d)} \le \Phi(m) e^{2\log_2\frac{1}{\delta}}. 
	\end{equation}
	On the other hand, if $m=x^\beta$ with $0<\beta<1$, then arguing as before and choosing the same $M$ and $K$, we get
	\[ \Phi(m)\leq \exp\left( \log_2 x - \log \frac{1}{\beta} + O(1) \right). \]
	Hence, with $\beta = \log m/\log x$ and $\delta = \beta/2$, we find in view of \eqref{smalld} that 
	\begin{equation*}
		\sum_{d|m \atop d \leq \sqrt{m}} \frac{\psi(d)}{\varphi(d)h_x(m/d)} \le C \log x. 
	\end{equation*}
	It remains to estimate 
	\begin{equation}
		\label{remains} \sum_{d|m \atop d \ge \sqrt{m}} \frac{\psi(d)}{\varphi(d)h_x(m/d)} \le e^{2\log_3 x} \sum_{d|m \atop d \ge \sqrt{m}} \frac{\psi(d)}{\varphi(d)}. 
	\end{equation}
	Note first that
	\[ \sum_{d|m \atop d \ge \sqrt{m}} \frac{\psi(d)}{\varphi(d)} \le m^{-\varepsilon/2} \sum_{d|m} \frac{d^{\varepsilon}\psi(d)}{\varphi(d)}=: m^{-\varepsilon/2} E(m). \]
	The definition of $E(m)$ shows that, in particular, 
	\[ E(p^k)=\sum_{r=0}^k \frac{p^{\varepsilon r}\psi(p^r)}{\varphi(p^r)} \leq 
	\begin{cases}
		(1-p^{\varepsilon} \psi(p))^{-1}, & p\leq M, \\
		1+K^{-1}p^\varepsilon(1-\psi(p))/(1-p^{\varepsilon}\psi(p)) , & p>M. 
	\end{cases}
	\]
	We may assume that $\varepsilon$ is so small that the factor $(1-\psi(p))/(1-p^{\varepsilon}\psi(p))$ does not exceed $2$. Letting $\mathcal{P}$ denote an arbitrary finite set of primes $p$, we then get that 
	\begin{align*}
		\label{factor2} E(m) & \leq \prod_{p\leq \log m} \left(1- p^{\varepsilon} \psi(p)\right)^{-1} \max_{\mathcal{P}\,:\, \sum_{p\in \mathcal{P}} \log p \le \log m} \prod_{p\in \mathcal{P}} \left(1+2 K^{-1}p^{\varepsilon} \right) \\
		& \le \exp\Bigg((\log m)^{\varepsilon} \log_2 m+2K^{-1} \max_{\frac{\log x}{\log_2 x} \le p\le x} \frac{p^{\varepsilon}}{\log p}\log m +O(1)\Bigg). 
	\end{align*}
	We now choose
	\[\varepsilon:=\frac{4 \log_3 x}{\log m}.\]
	Then the latter estimate becomes 
	\[E(m) \le \exp\left((\log m)^{\varepsilon} \log_2 m+K^{-1} \frac{(\log x)^{\varepsilon}}{\log_2 x} \log m \right) \le \exp\left( \log_2 m+ O(1) \right) \le \exp\left( \log_2 x+ O(1) \right).\]
	We finally observe that the factor $m^{-\varepsilon/2}$ will take care of the term $\log_3 x$ in the exponent on the right-hand side of \eqref{remains}. 
	
	Following an insight of G\'{a}l \cite{Gal49}, we argue in the following way in order to show that $\mathbf{T}_g$ is unbounded when $\lambda>1$. We start from the fact that
	\[ \prod_{p\leq y} p = e^{y(1+o(1))} ,\]
	which is a consequence of the prime number theorem. We let $\varphi(n)$ be the multiplicative function defined by setting
	\[\varphi(p^{r}):= 
	\begin{cases}
		1, & p\leq \frac{\log x}{\log_2 x} \quad \text{and} \quad r\leq \frac{1}{2} \log_2 x, \\
		0, & \text{otherwise}. 
	\end{cases}
	\]
	Then $\varphi(n)=0$ for $n>x$ if $x$ is large enough. We set $a_n:=\varphi(n)/(\sum_n \varphi(n))^{1/2}$ and use the Cauchy--Schwarz inequality to see that
	\[ \Bigg(\sum_{n\leq x} \Bigg| \sum_{d| n} a_d \psi(n/d) \Bigg|^2\Bigg)^{1/2} \geq \frac{\sum_n \varphi(n) \sum_{d|n} \varphi(d)\psi(n/d)}{\sum_n \varphi(n)}. \]
	To simplify the writing, we set $y:=\log x/\log_2 x$ and $\ell:=\lfloor\frac12 \log_2 x\rfloor$. Then we infer from the preceding estimate that 
	\begin{align*}
		\Bigg(\sum_{n\leq N} \Bigg| \sum_{d| n} a_d \psi(n/d) \Bigg|^2\Bigg)^{1/2} & \geq \prod_{p\leq y} \frac{1+\ell + \ell \psi(p) +(\ell -1) \psi(p^2) + \cdots + \psi(p^\ell)}{1+\ell} \\
		& \geq \prod_{p\leq y} \left(1+\frac{\ell}{\ell+1} \psi(p)\right)=\exp\left(\frac{\lambda \ell}{\ell+1} \log y +O(1)\right) \\
		& \geq (\log x)^{\lambda'} 
	\end{align*}
	for some $1<\lambda'<\lambda$ when $x$ is sufficiently large. We appeal again to Lemma~\ref{dyadic} to conclude that $\mathbf{T}_g$ is unbounded. 
\end{proof}
	We notice that, clearly, the symbol $g$  is not in $\BMOA(\mathbb{C}_0)$ for any $\lambda > 0$. In fact, for $\sigma > 0$,
	\[\sum_{n=1}^\infty \psi(n) n^{-\sigma} = \prod_{p} \left(1 - \psi(p)p^{-\sigma}\right)^{-1} \asymp \exp \Bigg(\lambda\sum_{p} \frac{\log{p}}{p^{1+\sigma}}\Bigg) \asymp e^{\lambda/\sigma}, \]
	which shows that $g$ is not even in the Smirnov class of  $\mathbb{C}_0$.

\section{Homogeneous symbols and coefficient estimates} \label{sec:homogen} The multiplicative symbols of the previous section represent analytic functions in $\mathbb{C}_0$. However, we saw in Theorem~\ref{thm:bmocond} that for $\mathbf{T}_g$ to be bounded, it is necessary that $g$ be  in $\BMOA(\mathbb{C}_{1/2})$. We will begin this section by showing that the latter condition cannot be relaxed by much. Indeed, to begin with, we will prove that linear Dirichlet series give examples of bounded $\mathbf{T}_g$-operators with symbols $g$ converging in $\mathbb{C}_{1/2}$ but in no larger half-plane. 
\begin{thm}
	\label{thm:linear} Let $g(s)=\sum_{p} b_p p^{-s}$ be any linear symbol in $\mathcal{H}^2$. Then $\|\mathbf{T}_g\| = \|g\|_{\mathcal{H}^2}$. 
\end{thm}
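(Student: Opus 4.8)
The lower bound is immediate and costs nothing: since $\mathbf{T}_g 1 = g$ (as in the proof of Theorem~\ref{thm:bmocond}(b)) and $\|1\|_{\mathcal{H}^2}=1$, we already have $\|\mathbf{T}_g\|\ge\|g\|_{\mathcal{H}^2}$. The entire content of the statement is therefore the reverse estimate $\|\mathbf{T}_g f\|_{\mathcal{H}^2}\le\|g\|_{\mathcal{H}^2}\,\|f\|_{\mathcal{H}^2}$, and the plan is to establish it not for $\mathbf{T}_g$ but for its adjoint, where a single Cauchy--Schwarz decouples cleanly. First I would record the coefficient formula: for a linear symbol $g(s)=\sum_p b_p p^{-s}$ and $f(s)=\sum_{n\ge1}a_n n^{-s}$, integrating $-fg'$ term by term gives
\[\mathbf{T}_g f(s)=\sum_{n\ge2}\Bigg(\frac{1}{\log n}\sum_{p\mid n} b_p(\log p)\,a_{n/p}\Bigg)n^{-s}.\]
Reading off the matrix of $\mathbf{T}_g$ in the orthonormal basis $\{n^{-s}\}$ and transposing, the adjoint acts on $c(s)=\sum_{n\ge1}c_n n^{-s}$ by
\[\mathbf{T}_g^* c(s)=\sum_{k\ge1}\Bigg(\sum_{p}\frac{\log p}{\log(pk)}\,\overline{b_p}\,c_{pk}\Bigg)k^{-s}.\]

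The main step is to estimate this inner sum over primes by Cauchy--Schwarz, splitting the summand as $\overline{b_p}\cdot\big(\tfrac{\log p}{\log(pk)}c_{pk}\big)$ with unit weights. The crucial point is that the index $p$ ranges over \emph{all} primes independently of $k$, so the first factor collapses to $\sum_p|b_p|^2=\|g\|_{\mathcal{H}^2}^2$ with no residual dependence on $k$. Summing over $k$ then yields
\[\|\mathbf{T}_g^* c\|_{\mathcal{H}^2}^2\le\|g\|_{\mathcal{H}^2}^2\sum_{k\ge1}\sum_{p}\frac{(\log p)^2}{(\log(pk))^2}\,|c_{pk}|^2.\]
Reindexing the double sum by $n=pk$ converts the inner sum into a sum over the prime divisors of $n$, so the right-hand side equals $\|g\|_{\mathcal{H}^2}^2\sum_{n\ge2}|c_n|^2\,(\log n)^{-2}\sum_{p\mid n}(\log p)^2$.

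To finish I would invoke the elementary inequality $\sum_{p\mid n}(\log p)^2\le(\log n)^2$, which is immediate from $\log n=\sum_{p\mid n}v_p(n)\log p\ge\sum_{p\mid n}\log p$ together with the fact that the square of a sum of positive numbers dominates the sum of the squares. This gives $\|\mathbf{T}_g^* c\|_{\mathcal{H}^2}\le\|g\|_{\mathcal{H}^2}\,\|c\|_{\mathcal{H}^2}$, hence $\|\mathbf{T}_g\|=\|\mathbf{T}_g^*\|\le\|g\|_{\mathcal{H}^2}$, and combined with the lower bound we obtain the asserted equality. I expect the one genuinely delicate point to be the choice to work with the adjoint: applied to $\mathbf{T}_g$ itself the Cauchy--Schwarz would produce the factor $(\log n)^{-2}\sum_{p\mid n}|b_p|^2$, which does \emph{not} decouple from $n$ and leaves an unmanageable weighted sum. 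Passing to $\mathbf{T}_g^*$ is exactly what frees the $b_p$-factor to sum to $\|g\|_{\mathcal{H}^2}^2$, after which only the harmless elementary inequality above is needed.
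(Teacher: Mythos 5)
Your proof is correct: the adjoint formula, the Cauchy--Schwarz step, the reindexing $n=pk$, and the elementary bound $\sum_{p\mid n}(\log p)^2\le\bigl(\sum_{p\mid n}\log p\bigr)^2\le(\log n)^2$ all check out, and together they give $\|\mathbf{T}_g^*\|\le\|g\|_{\mathcal{H}^2}$. The paper's proof is the same one-line Cauchy--Schwarz argument in disguise, but applied directly to $\mathbf{T}_g$ rather than to its adjoint: it splits the summand as $b_p\sqrt{\log p}\cdot\sqrt{\log p}\,a_{n/p}$, so that the first factor gives $\sum_{p\mid n}\log p\le\log n$, and the remaining weighted sum decouples after switching the order of summation because $\log p/\log(pk)\le1$. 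This means your closing remark --- that Cauchy--Schwarz applied to $\mathbf{T}_g$ itself produces an ``unmanageable'' sum and that passing to the adjoint is essential --- is not accurate; the direct approach fails only with unit weights, and the weighted splitting fixes it just as cleanly as dualizing does. Both proofs ultimately rest on the same fact, $\sum_{p\mid n}\log p\le\log n$, and neither buys anything the other doesn't; your adjoint version is a perfectly legitimate alternative, but the ``delicate point'' you identify is an artifact of insisting on unit weights rather than a genuine obstruction.
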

\begin{proof}
	We consider an arbitrary function $f(s)=\sum_{n\geq1} a_n n^{-s}$ in $\mathcal{H}^2$ and compute:
	\[\|\mathbf{T}_gf\|_{\mathcal{H}^2}^2 = \sum_{n=2}^\infty \frac{1}{(\log{n})^2}\Bigg|\sum_{p|n} b_p(\log p) a_{n/p} \Bigg|^2.\]
	By the Cauchy--Schwarz inequality
	\[ \Bigg|\sum_{p|n} b_p(\log p)a_{n/p} \Bigg|^2\leq \Bigg(\sum_{p|n}\log{p}\Bigg)\Bigg(\sum_{p|n} |b_p|^2 (\log p) |a_{n/p}|^2\Bigg)\leq (\log n) \sum_{p|n} |b_p|^2(\log{p})|a_{n/p}|^2.\]
	This shows that $\|\mathbf{T}_g\|\leq\|g\|_{\mathcal{H}^2}$. Since $\mathbf{T}_g 1 = g$, clearly $\|\mathbf{T}_g\|\geq\|g\|_{\mathcal{H}^2}$. 
\end{proof}

We note that the space of linear symbols $g$ in $\mathcal{H}^2$ is embedded not only in $\BMOA(\mathbb{C}_{1/2})$ but in fact satisfies the local Dirichlet integral condition
\[ \int_{0}^1\int_{1/2}^1 |g'(\sigma+it )|^2 d\sigma dt \ll \| g \|_2^2, \]
as shown in \cite[Example~4]{Olsen11}. We do not know if this stronger embedding can be established for a general symbol in $\mathcal{X}$.

While the norm of a linear function $g$ viewed as an element in the dual of $\mathcal{H}^1$ is also equivalent to $\|g\|_{\mathcal{H}^2}$ (see \cite{Helson06}), there is a striking contrast between the preceding result and the characterization of linear multipliers. Indeed, let again $M_g$ denote the operator of multiplication by $g$ on $\mathcal{H}^2$, and recall that $\|M_g\| = \|g\|_\infty$. (see \cite[Thm.~3.1]{HLS97}). Hence, in the special case when $g$ is linear, it follows from Kronecker's theorem that
\[\| M_g\|=\|g\|_\infty = \sup_{\sigma>0}\Bigg|\sum_p b_p p^{-s}\Bigg| = \sum_{p} |b_p|.\]
The difference between a linear symbol $g$ acting as a multiplier $M_g$ and as a symbol of the Volterra operator $\mathbf{T}_g$ is therefore dramatic: A bounded multiplier has coefficients in $\ell^1$, while the boundedness of $\mathbf{T}_g$ means that the coefficients are in $\ell^2$. The former implies absolute convergence in $\mathbb{C}_0$ and the latter only in $\mathbb{C}_{1/2}$. 

We may understand the phenomenon just observed in the following way. For a general symbol $g(s)=\sum_{n\ge 1} b_n n^{-s}$, we have, using also \eqref{eq:TgHi}, the series of inequalities
\begin{equation}\label{eq:ineqser} \left(\sum_{n=1}^\infty |b_n|^2\right)^{1/2} \le \| \mathbf{T}_g \| \le 2\| g\|_\infty\le 2\sum_{n=1}^{\infty} |b_n|. \end{equation}
The case of linear functions shows that neither the left nor the right inequality can be improved. Loosely speaking, the maximal independence between the terms in a linear symbol serves to make $\| \mathbf{T}_g \|$ minimal and thus equal to $\|g\|_2$ and, at the same time, to make $\| M_g \|$ maximal and hence equal to $\sum_{n\geq1} |b_n|$.  
This  motivates an investigation of what happens when the dependence between the terms in the symbol increases. Such a study, originating in the classical work of Bohnenblust and Hille \cite{BH31}, has already been made in the case of multipliers, in terms of $m$-homogeneous Dirichlet series. We will now follow the same path for $\mathbf{T}_g$-operators.  

Recall that $\Omega(n)$ gives the number of prime factors in $n$, counting multiplicities. An $m$-homogeneous Dirichlet series is of the form 
\begin{equation}
	\label{eq:mhomseri} g(s) :=\sum_{\Omega(n)=m} b_n n^{-s}.
\end{equation}
In this terminology, linear symbols are $1$-homogeneous Dirichlet series. A precise relationship between boundedness and absolute convergence for $m$-homogeneous Dirichlet series was found in \cite{BCQ06,MQ10}: \[\sum_{\Omega(n)=m} |b_n|\frac{(\log{n})^{\frac{m-1}{2}}}{n^\frac{m-1}{2m}} \leq C_m \Bigg\|\sum_{\Omega(n)=m} b_n n^{-s}\Bigg\|_\infty.\]
Here the exponent of $\log n$  on the left-hand side cannot be improved. Making the choice $m=\sqrt{\log{n}/\log_2{n}}$  in \eqref{eq:mhomseri}, we may obtain the following statement: If for some $c, C > 0$ we have
\begin{equation}
	\label{eq:sidon} \sum_{n=1}^\infty |b_n|\frac{\exp\left(c\sqrt{\log{n}\log_2{n}}\right)}{\sqrt{n}} \leq C \Bigg\|\sum_{n=1}^\infty b_n n^{-s}\Bigg\|_\infty,
\end{equation}
then $c < 1$. It was later shown in \cite{Breteche08,DFOOS11} that \eqref{eq:sidon} holds for $c < 1/\sqrt{2}$, and that this is optimal.

The series of inequalities \eqref{eq:ineqser} suggests that we should  search for upper $\ell^2$-estimates  for $\| \mathbf{T}_g\|$ as the appropriate analogues of the lower $\ell^1$ estimates \eqref{eq:mhomseri} and \eqref{eq:sidon}. Therefore, we now aim at finding weights $w_m(n)$ such that 
\begin{equation}
	\label{eq:homweight} \|\mathbf{T}_g\| \leq \Bigg(\sum_{\Omega(n)=m} |b_n|^2 w_m(n)\Bigg)^\frac{1}{2}
	\quad \text{for} \quad g(s)=\sum_{\Omega(n)=m} b_n n^{-s}.
\end{equation}
The crucial ingredient in the proof of Theorem~\ref{thm:linear} which covers the case $m=1$, is the estimate
\[\sum_{p|n} \log{p} \leq \log{n}.\]
To find a replacement for this estimate, we argue as follows. Observe that if $m\leq \Omega(n)$, then 
\begin{equation}
	\label{eq:divlogest} \sum_{k|n \atop \Omega(k)=m} \log{k} \leq \sum_{p_1|n}\sum_{p_2|n}\cdots \sum_{p_m|n} \log(p_1p_2\cdots p_m) = m \sum_{p_1|n} \cdots \sum_{p_m|n} \log p_m = m\omega(n)^{m-1}\log n. 
\end{equation}
This is sharp, up to a constant depending only on $m$. Indeed, let $n$ be square-free, so that $\Omega(n)=\omega(n)$. Then
\[\sum_{k|n \atop \Omega(k)=m} \log{k} = \sum_{p|n} \sum_{p|k|n \atop \omega(k)=m} \log{p} = \sum_{p|n}(\log{p})\binom{\omega(n)-1}{m-1}=(\log{n})\binom{\omega(n)-1}{m-1}.\]
This gives us an example of an admissible weight $w_2(n)$, since $\omega(n)/\log{n}$ is bounded. It turns out that we can obtain the following optimal result from \eqref{eq:divlogest}.
\begin{thm}
	\label{thm:homweights} The inequality in \eqref{eq:homweight} holds when $m=2$ with the weight function 
	\begin{equation}
		\label{eq:2hom} w_2(n) = C_2 \frac{\log{n}}{\log_2{n}}
	\end{equation}
	and $C_2$ an absolute constant. This is sharp in the sense that we cannot replace $\log_2{n}$ in \eqref{eq:2hom} by $(\log_2 n)^{1+\varepsilon}$ for any $\varepsilon>0$. When $m\geq3$,  the inequality in \eqref{eq:homweight} holds with 
	\begin{equation}
		\label{eq:mhom} w_m(n) = C_m \frac{n^{\frac{m-2}{m}}}{(\log{n})^{m-2}}
	\end{equation}
and $C_m$ an absolute constant.	This is also sharp in the sense that we cannot replace $(\log{n})^{m-2}$ in \eqref{eq:mhom} by $(\log n)^{m+\varepsilon-2}$ for any $\varepsilon>0$.
\end{thm}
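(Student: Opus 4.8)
The plan is to work entirely at the level of coefficients, starting from the identity
\[ \|\mathbf{T}_g f\|_{\mathcal{H}^2}^2 = \sum_{n=2}^\infty \frac{1}{(\log n)^2}\Bigg|\sum_{\substack{k\mid n \\ \Omega(k)=m}} b_k (\log k)\, a_{n/k}\Bigg|^2 \]
used in the proofs of Lemma~\ref{dyadic} and Theorem~\ref{thm:linear}, where only the $m$-homogeneous divisors $k$ of $n$ contribute. To each inner sum I would apply a Cauchy--Schwarz inequality, estimate the resulting divisor sum by \eqref{eq:divlogest}, and then interchange the order of summation by writing $n=kj$ with $\Omega(k)=m$ and $j=n/k$ arbitrary. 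After this interchange the bound factors as a weighted $\ell^2$-norm of $(b_k)$ times a quantity that I must control by $\|f\|_{\mathcal{H}^2}^2 = \sum_j |a_j|^2$, uniformly in the symbol. The cases $m=2$ and $m\geq 3$ require different weights in the Cauchy--Schwarz step, and I would treat them separately.

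For $m=2$ I would apply the plain Cauchy--Schwarz inequality, exactly as in Theorem~\ref{thm:linear}, and insert the bound $\sum_{k\mid n,\,\Omega(k)=2}\log k \leq 2\,\omega(n)\log n$ coming from \eqref{eq:divlogest}. Interchanging the summation leaves
\[ \|\mathbf{T}_g f\|_{\mathcal{H}^2}^2 \leq 2\sum_{\Omega(k)=2} |b_k|^2 (\log k) \sum_{j\geq 1} \frac{\omega(kj)}{\log(kj)}\,|a_j|^2 . \]
The point is that $\omega(M)\ll \log M/\log_2 M$ holds for every integer $M$, so that $\omega(kj)/\log(kj) \ll 1/\log_2(kj) \leq 1/\log_2 k$ uniformly in $j$. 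This bounds the inner sum by a multiple of $\|f\|_{\mathcal{H}^2}^2/\log_2 k$ and yields the weight \eqref{eq:2hom}.

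For $m\geq 3$ the same computation with the unweighted Cauchy--Schwarz fails, because the uniform control of $\omega(kj)^{m-1}/\log(kj)$ over all $j$ breaks down. Instead I would run a \emph{weighted} Cauchy--Schwarz, splitting $b_k(\log k) a_{n/k}$ by a weight $\rho_k \asymp k^{(m-2)/m}(\log k)^{-(m-1)}$ chosen so that the $(b_k)$-factor produces precisely the weight \eqref{eq:mhom} while the $(a_j)$-factor decouples. Interchanging the summation as before reduces the entire estimate to the single uniform bound
\[ \sum_{\substack{d\mid N \\ \Omega(d)=m}} \frac{(\log d)^m}{d^{(m-2)/m}} \ll_m (\log N)^2 \qquad \text{for every integer } N , \]
after which one reads off $\|\mathbf{T}_g\|^2 \ll_m \sum_{\Omega(n)=m} |b_n|^2 w_m(n)$. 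I expect this divisor-sum estimate to be the main obstacle: the crude bound $(\log d)^m \leq (\log N)^m$ is far too lossy, and one has to exploit the cancellation coming from the exponent $(m-2)/m<1$. I would establish it by grouping the divisors according to their prime factorization and applying Abel summation together with the prime number theorem, in the spirit of the arguments of G\'al \cite{Gal49} and Hilberdink \cite{Hilberdink09} already invoked in Section~\ref{sec:multip}; the extremal configuration is the primorial $N=\prod_{p\leq P}p$, which also fixes the constant.

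For the two sharpness assertions I would exhibit, in each case, a sequence of $m$-homogeneous symbols $g$ together with test functions $f$ for which the ratio $\|\mathbf{T}_g f\|_{\mathcal{H}^2}^2/\|f\|_{\mathcal{H}^2}^2$ comes within a constant factor of $\sum_n |b_n|^2 w_m(n)$. Concretely I would take $g$ supported on products of $m$ primes drawn from a suitable interval (a primorial-type configuration) and choose $f$ so as to force near-equality in the Cauchy--Schwarz step. The optimality of the exponent of $\log_2 n$ when $m=2$ then reflects the sharpness of $\omega(n)\asymp \log n/\log_2 n$ for such $n$, while the optimality of the exponent $m-2$ of $\log n$ when $m\geq 3$ reflects the sharpness of the divisor-sum estimate above. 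These extremal examples are of the same nature as those in the Bohnenblust--Hille and Bohr absolute convergence results of \cite{BCQ06,MQ10,Breteche08,DFOOS11} with which the statement is to be compared.
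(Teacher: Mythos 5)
Your sufficiency argument follows the paper's route essentially verbatim. For $m=2$ your computation is correct: it is just a reorganization of the paper's proof (the paper inserts the weight $\log_2 k$ inside the Cauchy--Schwarz step and then uses $\log_2 k\le\log_2 n$, \eqref{eq:divlogest}, and $\omega(n)\ll\log n/\log_2 n$; you apply plain Cauchy--Schwarz first and use $\omega(kj)/\log(kj)\ll 1/\log_2(kj)\le 1/\log_2 k$ after interchanging sums --- both yield \eqref{eq:2hom}). For $m\ge 3$ you have correctly reduced everything to the divisor-sum bound $A_m(N):=\sum_{d\mid N,\ \Omega(d)=m}d^{2/m-1}(\log d)^m\ll_m(\log N)^2$, which is exactly the paper's key estimate, but you leave its proof as a sketch and rightly flag it as the main obstacle. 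The paper proves it by replacing $N$ with the integer $\widetilde{N}$ obtained from the decreasing rearrangement of the exponents (legitimate since $t\mapsto t^{2/m-1}(\log t)^m$ is eventually decreasing and $\widetilde{N}\le N$), observing that the largest prime $p_d$ of $\widetilde{N}$ satisfies $p_d\ll\log N$ by the prime number theorem, and then summing over the largest prime factor of $d$ first:
$A_m(N)\le\sum_{p\le p_d}(m\log p)^m p^{2/m-1}\bigl(\sum_{q\le p}q^{2/m-1}\bigr)^{m-1}\ll\sum_{p\le p_d}p\log p\ll p_d^2\ll(\log N)^2$.
Your ``Abel summation plus PNT, primorial extremal'' plan is consistent with this, but the collapse of the $m$-fold divisor sum to $\sum_{p\le p_d}p\log p$ is the entire content of the lemma and is not something you have verified.

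The genuine gap is in the sharpness assertion for $m=2$. A symbol supported on products of \emph{two primes drawn from the same interval} cannot disprove the weight $\log n/(\log_2 n)^{1+\varepsilon}$: if $k=pq$ with $p,q\le x$ and the test function is $\prod_{x/2<p\le x}(1+p^{-s})$, then $\log_2 k\asymp\log_2 x$, whereas the slack you must exhaust in the positive proof sits at the scale $\log_2 n\asymp\log x$ for the primorial-type $n$ with $\omega(n)\asymp\log n/\log_2 n$. Carrying out the computation with the natural normalization, the ratio $\|\mathbf{T}_g f\|_{\mathcal{H}^2}^2/\|f\|_{\mathcal{H}^2}^2$ comes out $\asymp(\log_2 x)^{1+\varepsilon}/\log x\to0$, so this configuration fails outright. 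The paper's example is necessarily asymmetric: $g(s)=\sum_{x/2<p\le x}p^{-1}(\log_2(pq))^{1+\varepsilon/2}(pq)^{-s}$ with a single auxiliary prime $q\sim e^x$, which forces $\log_2(pq)\asymp\log x\asymp\log_2 n$ for the relevant integers $n\in qS_x$ and produces the ratio $(\log x)^{\varepsilon}\to\infty$. For $m\ge3$ your symmetric construction does match the paper's (there $b_n=n^{-1+1/m}(\log n)^{m-1+\varepsilon/2}$ on square-free $m$-fold products of primes in $(x/2,x]$, tested against the same $f$), but you have not supplied the coefficients or the computation, so only the shape of the example is right.
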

\begin{proof}
	To prove that \eqref{eq:2hom} is sufficient, we let $\mathbf{T}_g$ act on $f(s)=\sum_{n\geq1} a_n n^{-s}$. By the Cauchy--Schwarz inequality, 
	\begin{align*}
		\|\mathbf{T}_gf\|_{\mathcal{H}^2}^2 &\leq \sum_{n=2}^\infty \frac{1}{(\log{n})^2}\Bigg(\sum_{k|n \atop \Omega(k)=2} (\log_2{k})\log{k}\Bigg)\Bigg(\sum_{k|n \atop \Omega(k)=2}|b_k|^2\frac{\log{k}}{\log_2{k}} |a_{n/k}|^2\Bigg) \\
		&\leq \sum_{n=2}^\infty \frac{\log_2{n}}{(\log{n})^2}\Bigg(\sum_{k|n \atop \Omega(k)=2} \log{k}\Bigg)\Bigg(\sum_{k|n \atop \Omega(k)=2}|b_k|^2\frac{\log{k}}{\log_2{k}} |a_{n/k}|^2\Bigg). 
	\end{align*}
	We complete the proof by using \eqref{eq:divlogest} and the well known estimate $\omega(n)\ll \log{n}/(\log_2{n})$. 
	
	To prove that \eqref{eq:2hom} is best possible, we assume that there is some $\varepsilon>0$ such that
	\[\|\mathbf{T}_g\| \leq C_2 \Bigg(\sum_{\Omega(n)=2}|b_n|^2 \frac{\log{n}}{(\log_2{n})^{1+\varepsilon}}\Bigg)^\frac{1}{2}\]
	for every $2$-homogeneous Dirichlet series $g$. Let $x$ be a large real number and consider the symbol
	\[g(s) = \sum_{x/2<p\leq x} \frac{\left(\log_2(pq)\right)^{1+\varepsilon/2}}{p}\,(pq)^{-s},\]
	where $q\sim e^x$ is a prime number. The weight condition is then satisfied uniformly in $x$, since
	\[\sum_{\Omega(n)=2} |b_n|^2 \frac{\log{n}}{(\log_2{n})^{1+\varepsilon}} = \sum_{x/2<p\leq x} \frac{\log(pq)\log_2(pq)}{p^2} \asymp \frac{x\log{x}}{x^2}\pi(x) \asymp 1.\]
	We now want to show that $\|\mathbf{T}_g\|$ is unbounded as $x \to \infty$, and choose as a test function 
	\begin{equation}
		\label{eq:ftest} f(s) := \prod_{x/2<p\leq x} \left(1+p^{-s}\right). 
	\end{equation}
	Let $S_x$ denote the set of square-free numbers generated by the primes $x/2<p\leq x$, so that $\|f\|_{\mathcal{H}^2}^2 = |S_x|=2^{N(x)}$, where $N(x):=\pi(x)-\pi(x/2)$. Note that if $n$ is in $S_x$, then $\omega(n)\leq N(x)$. It follows from the prime number theorem that
	\[N(x)\sim \frac{x}{2\log{x}}.\]
	Set $V_x := \left\{n \in S_x\,:\, \omega(n)\geq N(x)/2\right\}$. By the symmetry of the binomial expansion
	\[|S_x| = \sum_{n=0}^{N(x)} \binom{N(x)}{n} = \sum_{n < N(x)/2} \binom{N(x)}{2} + |V_x|,\]
	we find that $|V_x|\sim |S_x|/2$. Then 
	\begin{align*}
		\|\mathbf{T}_g\|^2 \geq \frac{\|\mathbf{T}_gf\|_{\mathcal{H}^2}^2}{\|f\|_{\mathcal{H}^2}^2} &\geq \frac{1}{|S_x|} \sum_{n \in V_x} \frac{1}{\left(\log(nq)\right)^2}\Bigg|\sum_{pq|nq} \frac{\left(\log_2(pq)\right)^{1+\varepsilon/2}}{p}\log(pq)\Bigg|^2 \\
		&\geq \frac{1}{|S_x|} \sum_{n \in V_x} \Bigg|\sum_{p|n} \frac{(\log_2{q})^{1+\varepsilon/2}}{p}\Bigg|^2 \asymp \frac{1}{|S_x|} \sum_{n \in V_x} \Bigg|\frac{(\log{x})^{1+\varepsilon/2}}{x}\,\omega(n)\Bigg|^2 &\asymp (\log{x})^\varepsilon, 
	\end{align*}
	giving the desired conclusion.
	
	The proof that \eqref{eq:mhom} is sharp is similar. Let $\varepsilon>0$  be given and consider
	\[g(s) = \sum_{n \in S_x \atop \omega(n)=m} n^{-1+1/m} (\log n)^{m-1+\varepsilon/2} n^{-s}.\]
	We observe that
	\[\sum_{\Omega(n)=m} |b_n|^2 n^{1-2/m}(\log{n})^{2-m-\varepsilon} = \sum_{n \in S_x \atop \omega(n)=m} \frac{(\log{n})^{m}}{n} \asymp \frac{(\log{x})^m}{x^m}(\pi(x))^m \asymp 1.\]
	Now, if $n$ is in $S_x$, then it follows from the prime number theorem that $\log{n}\ll x$. As test function, we use again \eqref{eq:ftest}. The function \[t \mapsto t^{-1+1/m}(\log{t})^{m-1+\varepsilon/2}\]
	is eventually decreasing for every $m\geq3$ and every $\varepsilon>0$. We find that 
	\begin{align*}
		\|\mathbf{T}_g\|^2 \geq \frac{\|\mathbf{T}_g f\|_{\mathcal{H}^2}^2}{\|f\|_{\mathcal{H}^2}^2} &\geq \frac{1}{|S_x|}\sum_{n \in V_x} \frac{1}{(\log{n})^2}\Bigg|\sum_{k|n \atop \Omega(k)=m} k^{-1+1/m}(\log{k})^{m-1+\varepsilon/2}\Bigg|^2 \\
		&\gg \frac{1}{|S_x|}\sum_{n\in V_x} \frac{1}{x^2} \Bigg|x^{-m+1}(\log{x})^{m+\varepsilon/2}\binom{\omega(n)}{m}\Bigg|^2 \gg (\log{x})^\varepsilon \frac{1}{|S_x|}\sum_{n \in V_x} 1 \gg (\log{x})^\varepsilon, 
	\end{align*}
	where we used that $k\leq x^{m}$ in the inner sum.
	
	It remains to establish that \eqref{eq:homweight} holds with the weight \eqref{eq:mhom}. Let $\mathbf{T}_g$ act on $f(s)=\sum_{n\geq1} a_n n^{-s}$. By the Cauchy--Schwarz inequality,
	\[\|\mathbf{T}_gf\|_{\mathcal{H}^2}^2 \leq \sum_{n=2}^\infty \frac{1}{(\log{n})^2}\Bigg(\sum_{k|n \atop \Omega(k)=m} k^{2/m-1}(\log k)^{m} \Bigg)\Bigg(\sum_{k|n \atop \Omega(k)=m}|b_k|^2 k^{1-2/m} (\log k)^{2-m} |a_{n/k}|^2\Bigg).\]
	Hence it suffices to show that
	\[A_m(n):=\sum_{k|n \atop \Omega(k)=m} k^{2/m-1} (\log k)^{m} \ll (\log n)^2.\]
	Suppose that $n$ has the prime factorization $n=(p_j)^\kappa$. Let $\widetilde{\kappa}$ denote a decreasing rearrangement of $\kappa$ and let $\widetilde{n}=(p_j)^{\widetilde{\kappa}}$. The function
	\[t \mapsto t^{2/m-1}(\log t)^m\]
	is eventually decreasing for every $m\geq3$, so clearly $A_m(n) \ll A_m(\widetilde{n})$. On the other hand $\widetilde{n} \leq n$, so we may without loss of generality assume that $n = \widetilde{n}$. Hence, we have that
	\[n = p_1^{\kappa_1}\cdots p_d^{\kappa_d},\]
	where $\kappa_1 \geq \kappa_2 \geq \cdots \geq \kappa_d>0$. By the prime number theorem, 
	\begin{equation}
		\label{eq:two} p_d \sim \sum_{p \leq p_d} \log{p} = \log\Bigg(\prod_{j=1}^d p_j\Bigg) \leq \log n. 
	\end{equation}
	By summing over the largest prime first, we find that
	\[A_m(n)\leq \sum_{p\leq p_d} (m\log p)^{m} p^{2/m-1} \Bigg(\sum_{q\leq p} q^{2/m-1}\Bigg)^{m-1} \ll \sum_{p\leq p_d} p(\log p) \ll p_d^2 \ll (\log{n})^2\]
	using the prime number theorem twice. 
\end{proof}

As promised, Theorem~\ref{thm:homweights} exhibits $m$-homogeneous Dirichlet series $g$ in $\mathcal{X}$ that converge in $\mathbb{C}_{1/m}$, but in no larger half-plane, for every $m\geq2$. This can be loosely interpreted as saying that the more prime factors we have in each non-zero term, the closer we get to the half-plane $\mathbb{C}_0$. In this sense, the multiplicative symbols of Section~\ref{sec:multip} correspond to $m=\infty$, and it is therefore not surprising that they converge in $\mathbb{C}_0$. 

Setting $m=\sqrt{2\log{n}/\log_2{n}}$, we are led to a family of weights $w$ (cf. \eqref{eq:sidon}) that give estimates of the type \eqref{eq:homweight} with no reference to homogeneity, allowing arbitrary Dirichlet series $g$. 
\begin{thm}
	\label{thm:genweight} If $c < 2$, then 
	\begin{equation}
		\label{eq:genweight} \|\mathbf{T}_g\|\leq C\Bigg(\sum_{n=2}^\infty |b_n|^2 n \exp\left(-c\sqrt{\log{n}\log_2{n}}\right)\Bigg)^\frac{1}{2}. 
	\end{equation}
	Conversely, if \eqref{eq:genweight} holds for every $\mathbf{T}_g$-operator, then $c \le 2\sqrt{2}$. 
\end{thm}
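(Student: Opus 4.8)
The plan is to prove the two halves separately: I would derive the upper bound \eqref{eq:genweight} for $c<2$ from the homogeneous estimates of Theorem~\ref{thm:homweights} by optimizing the degree of homogeneity, and derive the restriction $c\le 2\sqrt2$ by testing \eqref{eq:genweight} against a family of extremal symbols built from the sharpness examples of that same theorem. For the sufficiency I would work at the level of coefficients and split the symbol into its homogeneous components, $g=\sum_{m\ge 1}g_m$ with $g_m(s)=\sum_{\Omega(n)=m}b_n n^{-s}$. Since $\mathbf{T}_g=\sum_m\mathbf{T}_{g_m}$, an application of the Cauchy--Schwarz inequality in the variable $m$ against a summable sequence $(\gamma_m)$ (say $\gamma_m=m^{2}$, so that $\sum_m\gamma_m^{-1}<\infty$) gives
\[
\|\mathbf{T}_g\|^2\le\Big(\sum_{m\ge1}\gamma_m^{-1}\Big)\sum_{m\ge1}\gamma_m\|\mathbf{T}_{g_m}\|^2 .
\]

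Feeding in the weighted $\ell^2$-bounds \eqref{eq:2hom} and \eqref{eq:mhom} from Theorem~\ref{thm:homweights} and collecting the terms with $\Omega(n)=m$, this reduces matters to the single pointwise comparison
\[
\gamma_{\Omega(n)}\,w_{\Omega(n)}(n)\ll n\exp\big(-c\sqrt{\log n\log_2 n}\big),\qquad n\ge 2,
\]
so that the homogeneous weights get packaged into one weight with no reference to homogeneity. Writing $L=\log n$, $L_2=\log_2 n$ and inserting $w_m(n)\asymp C_m\,n^{1-2/m}(\log n)^{2-m}=C_m\,n\exp\!\big(-(2/m)L-(m-2)L_2\big)$, the exponent $(2/m)L+(m-2)L_2$ is a convex function of the degree $m$, minimized at $m^\ast\asymp\sqrt{2L/L_2}$ with minimal value $2\sqrt{2LL_2}-2L_2$. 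Thus, up to the subexponential factor coming from $\gamma_m$ and from $(\log n)^2=e^{2L_2}$, the best choice of degree reproduces exactly the weight $n\exp(-2\sqrt2\sqrt{LL_2})$, and the comparison holds provided the growth of the constants $C_m$ is controlled.

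The decisive point lies in that control: the constants $C_m$ in \eqref{eq:mhom} grow like $(c_0 m)^m$, i.e.\ $\log C_m\sim\kappa\,m\log m$, and feeding this into the convex optimization above degrades the attainable rate from $2\sqrt2$ to $2\sqrt{2-\kappa}$. The crux of the sufficiency proof is therefore a sharp tracking of the $m$-dependence in the divisor estimate $A_m(n)=\sum_{k\mid n,\ \Omega(k)=m}k^{2/m-1}(\log k)^m\ll(\log n)^2$ underlying \eqref{eq:mhom}: the crude bound used to prove Theorem~\ref{thm:homweights} only certifies $\kappa\le 2$, which would force $c<0$, and one must instead establish the sharp value $\kappa=1$, after which $2\sqrt{2-1}=2$ gives precisely the threshold $c<2$. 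I expect this constant-chasing, together with handling the boundary degrees $m=1,2$ and the ranges of $n$ where $\Omega(n)$ is far from $m^\ast$ (e.g.\ prime powers, where the homogeneous weight is enormous but the true operator norm is small), to be the main obstacle.

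For the necessity I would run the argument in reverse, choosing for each large $x$ a single homogeneous degree $m$ and an $m$-homogeneous symbol supported on the square-free integers $n$ with $\omega(n)=m$ whose prime factors lie in $(x/2,x]$, exactly as in the sharpness part of Theorem~\ref{thm:homweights}, but now tuning $m$ against $x$ so that $\Omega(n)\asymp\sqrt{\log n/\log_2 n}$ along the support. Testing $\mathbf{T}_g$ against $f=\prod_{x/2<p\le x}(1+p^{-s})$ and exploiting the binomial concentration of $\omega(n)$ as in the proof of Theorem~\ref{thm:homweights}, one obtains a lower bound for $\|\mathbf{T}_g\|$ of Gál's type; comparing it with the weighted $\ell^2$-quantity on the right of \eqref{eq:genweight} then forces $c\le 2\sqrt2$, the value read off from the unconstrained optimum $2\sqrt{2LL_2}$ above. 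The asymmetry between paying the factor $C_m\sim m^m$ in the upper bound but \emph{not} paying it in the lower bound is exactly what accounts for the gap between the sufficiency threshold $2$ and the necessity threshold $2\sqrt2$.
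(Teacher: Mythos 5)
Your necessity argument is essentially the paper's: the authors simply note that for $m=\sqrt{2\log n/\log_2 n}$ one has $n^{1-2/m}(\log n)^{2-m}=n(\log n)^2\exp(-2\sqrt{2}\sqrt{\log n\log_2 n})$ and invoke the sharpness part of Theorem~\ref{thm:homweights}; re-running the G\'al-type test with $m$ tuned to $x$ gives the same thing. The sufficiency half, however, has a genuine gap exactly where you flag it, and it is not one you can close by constant-chasing inside the existing proof of Theorem~\ref{thm:homweights}. Your reduction needs the \emph{uniform} bound $\sup_N A_m(N)/(\log N)^2\le e^{(1+o(1))m\log m}$ for $A_m(N)=\sum_{k\mid N,\ \Omega(k)=m}k^{2/m-1}(\log k)^m$, where the supremum runs over all $N$, not just over $N$ in the critical regime $\Omega(N)\asymp\sqrt{2\log N/\log_2 N}$. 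A Rankin-type argument controls $A_m(N)/(\log N)^2$ by roughly $\exp(m\log_3 N+O(m))$, which is of the desired size only when $\log_3 N\lesssim\log m$, while the paper's own estimate (the factor $(m\log p)^m\bigl(\sum_{q\le p}q^{2/m-1}\bigr)^{m-1}$ contributes $m^m(m/2)^{m-1}$) certifies only $e^{(2+o(1))m\log m}$ uniformly. By your own accounting $\kappa=2$ yields $c\le 0$, so the proposal as written proves nothing for any positive $c$, and the needed strengthening to $\kappa=1$ is an unproved and nonobvious uniform refinement of Theorem~\ref{thm:homweights} (note that $N=2^m$ already shows the exponent $m\log m$ cannot be lowered, so there is no slack).

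The paper sidesteps this entirely by \emph{not} decomposing the operator into homogeneous pieces. It applies Cauchy--Schwarz once, directly with the target weight $\varphi_c(k)=\exp(c\sqrt{\log k\log_2 k})$:
\[
\|\mathbf{T}_gf\|_{\mathcal{H}^2}^2\le\sum_{n\ge 2}\frac{1}{(\log n)^2}\Bigg(\sum_{k|n}\frac{\varphi_c(k)}{k}(\log k)^2\Bigg)\Bigg(\sum_{k|n}|b_k|^2\frac{k}{\varphi_c(k)}|a_{n/k}|^2\Bigg),
\]
so everything reduces to the divisor-sum estimate $A(n):=\sum_{k|n}\varphi_{c'}(k)/k\ll(\log n)^2$ for some $c<c'<2$. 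The homogeneous decomposition is then applied only to the divisors $k$ of a fixed $n$: after the rearrangement reduction one has $k\le(\log n)^m$ when $\Omega(k)=m$, hence $\varphi_{c'}(k)\le\exp\bigl(c'\sqrt{2m\log_2 n\log_3 n}\bigr)$, and Rankin's trick gives $\sum_{k|n,\,\Omega(k)=m}1/k\le\exp(-m\log m+m\log_4 n+m+O(1))$; optimizing over $m$ locates the largest term at $m\asymp\log_2 n/\log_3 n$ with size $\exp\bigl(({c'}^2/2+o(1))\log_2 n\bigr)$, which is $\ll(\log n)^2$ precisely when $c'<2$. Because $m$ and $n$ are coupled through $\log_2 n$ and $\log_3 n$ inside a single divisor sum, rather than through an operator-norm constant $C_m$ that must be uniform in $n$, no sharp tracking of $m$-dependent constants is required. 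I recommend adopting this route; if you want to pursue yours, you must isolate and actually prove the uniform estimate $A_m(N)\le e^{(1+o(1))m\log m}(\log N)^2$ as a lemma, and that is the hard part of the theorem, not a technicality.
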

\begin{proof}
	 We observe first that we must have $c \le 2\sqrt{2}$ for \eqref{eq:genweight} to hold in view of the sharpness of   Theorem~\ref{thm:homweights} and the fact that
	\[\frac{n^{1-2/m}}{(\log{n})^{m-2}} = n(\log{n})^2 \exp\left(-2\sqrt{2}\sqrt{\log{n}\log_2{n}}\right),\]
	if $m=\sqrt{2\log{n}/\log_2{n}}$. 
	
	It remains therefore only to show the positive result that \eqref{eq:genweight} holds whenever $0 < c < 2$. To simplify the notation, we set $\varphi_c(k) := \exp\left(c\sqrt{\log{k}\log_2{k}}\right)$. By the Cauchy--Schwarz inequality,
	\[\|\mathbf{T}_gf\|_{\mathcal{H}^2}^2 \leq \sum_{n=2}^\infty \frac{1}{(\log{n})^2}\Bigg(\sum_{k|n} \frac{\varphi_c(k)}{k}(\log{k})^2\Bigg)\Bigg(\sum_{k|n}|b_k|^2\frac{k}{\varphi_c(k)}|a_{n/k}|^2\Bigg).\]
	Choosing some $c'$, $c<c'<2$, we find that
	\[\sum_{k|n}\frac{\varphi_c(k)}{k}(\log{k})^2 \ll \sum_{k|n} \frac{\varphi_{c'}(k)}{k} =: A(n)\]
	The rest of the proof is devoted to showing that $A(n) \ll (\log n)^2$, which is precisely what is needed. 
	
	Since $x \mapsto \varphi_{c'}(x)/x$ is eventually decreasing on $[1,\infty)$, we may, as in the last part of the proof of Theorem~\ref{thm:homweights}, assume that $n=\widetilde{n}$. By splitting into homogeneous parts and using \eqref{eq:two}, we find that
	\[A(n) = \sum_{m\leq\Omega(n)} \sum_{k|n \atop \Omega(k)=m} \frac{\varphi_{c'}(k)}{k} \leq \sum_{m\leq\Omega(n)} \varphi_{c'}\left((\log{n})^m\right) \sum_{k|n \atop \Omega(k)=m} \frac{1}{k}.\]
	In each inner sum $\sum k^{-1}$, we divide every prime factor of $k$ by some $a > 0$ and then bound the resulting sum by an Euler product (Rankin's trick), to obtain that 
	\begin{align*}
		\sum_{k|n \atop \Omega(k)=m} \frac{1}{k} &\leq a^{-m} \prod_{p|n} \left(1-\frac{a}{p}\right)^{-1} = a^{-m}\exp\Bigg(a\sum_{p|n} \frac{1}{p} + O(1)\Bigg) \\
		&\ll a^{-m} \exp\Bigg(a\sum_{p \leq p_d} \frac{1}{p}\Bigg) \asymp a^{-m}\exp\left(a\log_2 p_d\right) \leq \exp\left(-m\log{a}+a\log_3{n}\right). 
	\end{align*}
	Choosing $a:=m/(\log_3{n})$, we obtain in total
	\begin{align*}
		A(n) &\leq \sum_{m\leq \Omega(n)} \exp\left[c'\sqrt{m(\log_2 n)(\log m +\log_3 n)} -m\log m+m\log_4 n+m\right] \\
		&\ll \Omega(n) + \sum_{m\leq \log_2{n}} \exp\left[c'\sqrt{2m(\log_2 n)(\log_3 n)} -m\log m+m\log_4 n+m\right],
	\end{align*}
	where we first used that the exponential in the sum is bounded when $m\geq\log_2{n}$, and then that $\log{m}\leq\log_3{n}$ when $m\leq \log_2{n}$. To estimate the final sum, we use calculus to conclude that the index $m$ of the largest term should satisfy
	\[\frac{{c'}^2}{2}(\log_2{n})(\log_3{n}) = m\left(\log{m}-\log_4{n}\right)^2,\]
	and we see that $m=({c'}^2/2+o(1))\log_2{n}/\log_3{n}$. Combining this with the standard estimate $\Omega(n)\leq \log{n}/\log{2}$, we find that
	\[A(n) \ll \log{n} + (\log_2{n}) \exp\left(\left(\frac{{c'}^2}{2}+o(1)\right)(\log_2 n)\right) \ll (\log n)^2,\]
	whenever ${c'}^2<4$, which is the desired estimate.
\end{proof}
	It is not surprising that there is a gap between the necessary and sufficient conditions of Theorem~\ref{thm:genweight}. When considering the inequality \eqref{eq:sidon}, the necessary condition obtained from $m$-homogeneous Dirichlet series misses the sharp condition, also by a factor $\sqrt{2}$. In the latter case, the proof of the sharp necessary condition captures cancellations by $L^\infty$ estimates for random trigonometric polynomials \cite{Breteche08}. This suggests that our arguments, which only deal with the absolute values of the coefficients of $g$, cannot be expected to tell the full story. 

\section{Boundedness of $\mathbf{T}_g$ on $\mathcal{H}^p$}\label{sec:embed} 
\subsection{Carleson measure characterization} We will now consider the action of the Volterra operator $\mathbf{T}_g$ on the Hardy spaces $\mathcal{H}^p$, for $0<p<\infty$. To this end, we recall that $\mathcal{X}_p$ denotes the space of symbols $g$ in $\mathcal{D}$ such that the Volterra operator $\mathbf{T}_g$ acts boundedly on $\mathcal{H}^p$, and we set
\[\|g\|_{\mathcal{X}_p} := \|\mathbf{T}_g\|_{\mathcal{L}(\mathcal{H}^p)}.\]
We will now establish our characterization of the elements of $\mathcal{X}_p$ in terms of Carleson measures. 

Applying the Littlewood--Paley formula \eqref{eq:LPp} to $\mathbf{T}_gf$, we immediately obtain a characterization of the symbols $g$ that belong to $\mathcal{X}_2$: $g$ is in $\mathcal{X}_2$ if and only if it there is a positive constant $C(g)$ such that
\[\|\mathbf{T}_g f\|_{\mathcal{H}^2}^2 \asymp \int_{\mathbb{T}^\infty} \int_\mathbb{R} \int_0^\infty |f_\chi(\sigma+it)|^2 |g'_\chi(\sigma+it)|^2 \sigma\,d\sigma\,\frac{dt}{1+t^2}\,dm_\infty(\chi) \leq C(g)^2 \|f\|_{\mathcal{H}^2}^2.\]
Using Fubini's theorem, we may remove the integral over $\mathbb{R}$, since each $t$ represents a rotation in each variable on $\mathbb{T}^\infty$. From this observation we obtain the characterization 
\begin{equation}
	\label{eq:CD2} \int_{\mathbb{T}^\infty}\int_0^\infty |f_\chi(\sigma)|^2\,|g_\chi'(\sigma)|^2 \,\sigma d\sigma\,dm_\infty(\chi) \leq C(g)^2\|f\|_{\mathcal{H}^2}^2. 
\end{equation}
Clearly, the smallest constant $C(g)$ in \eqref{eq:CD2} satisfies $C(g) \asymp \|\mathbf{T}_g\|_{\mathcal{L}(\mathcal{H}^2)}$. \begin{thm}
	\label{thm:avcarl} $\mathbf{T}_g$ acts boundedly on $\mathcal{H}^p$ for $0<p<\infty$ if and only if there is a positive constant $C(g,p)$ such that 
	\begin{equation}
		\label{eq:tempo} \int_{\mathbb{T}^\infty}\int_0^\infty |f_\chi(\sigma)|^p |g'_\chi(\sigma)|^2\sigma\,d\sigma dm_\infty(\chi) \leq C(g,p)^2 \|f\|_{\mathcal{H}^p}^p, 
	\end{equation}
	for all $f\in\mathcal{H}^p$. Furthermore, if 
	\begin{equation}
		\label{eq:cpgdef} C(g,p) := \sup_{\|f\|_{\mathcal{H}^p}=1}\left(\int_{\mathbb{T}^\infty}\int_0^\infty |f_\chi(\sigma)|^p |g'_\chi(\sigma)|^2\sigma\,d\sigma dm_\infty(\chi)\right)^\frac{1}{2}, 
	\end{equation}
	then $C(g,p)\asymp \|\mathbf{T}_g\|_{\mathcal{L}(\mathcal{H}^p)}$. 
\end{thm}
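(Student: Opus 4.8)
The plan is to run everything through the Littlewood--Paley formula \eqref{eq:LPp} and then reduce the theorem to the comparison of two integrals over $\mathbb{T}^\infty\times(0,\infty)$. Applying \eqref{eq:LPp} to $\mathbf{T}_g f$, and using that $\mathbf{T}_g f(+\infty)=0$, that $(\mathbf{T}_g f)'=fg'$, and that twisting commutes with differentiation so that $(\mathbf{T}_g f)_\chi'=f_\chi g_\chi'$, I obtain an expression whose integrand at the point $(\chi,\sigma+it)$ equals $|(\mathbf{T}_g f)_\chi(\sigma+it)|^{p-2}|f_\chi(\sigma+it)|^2|g_\chi'(\sigma+it)|^2$. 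Since each vertical translation by $t$ is a rotation of $\chi$ and $m_\infty$ is rotation invariant, the $\chi$-integral is independent of $t$, so — exactly as in the derivation of \eqref{eq:CD2} — the factor $\tfrac1\pi\int_\mathbb{R}(1+t^2)^{-1}\,dt=1$ collapses the $t$-integral. Introducing the positive measure $d\mu_g:=|g_\chi'(\sigma)|^2\,\sigma\,d\sigma\,dm_\infty(\chi)$ on $\mathbb{T}^\infty\times(0,\infty)$ and writing
\[ J(f):=\int |(\mathbf{T}_g f)_\chi(\sigma)|^{p-2}\,|f_\chi(\sigma)|^2\,d\mu_g, \qquad I(f):=\int |f_\chi(\sigma)|^p\,d\mu_g, \]
the reduction reads $\|\mathbf{T}_g f\|_{\mathcal{H}^p}^p\asymp J(f)$, while the Carleson quantity is $C(g,p)^2=\sup_{\|f\|_{\mathcal{H}^p}=1} I(f)$. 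The theorem thus amounts to the comparison $\sup_f J(f)/\|f\|_{\mathcal{H}^p}^p\asymp\sup_f I(f)/\|f\|_{\mathcal{H}^p}^p$, and for $p=2$ the integrands of $J$ and $I$ coincide, which is \eqref{eq:CD2}.

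Two applications of Hölder's inequality to the integrand $|(\mathbf{T}_g f)_\chi|^{p-2}|f_\chi|^2$ dispatch the exponents for which the splitting is admissible. For $p\ge2$ one gets $J(f)\le I(\mathbf{T}_g f)^{(p-2)/p}I(f)^{2/p}$, and for $0<p<2$ one gets $I(f)\le J(f)^{p/2}I(\mathbf{T}_g f)^{(2-p)/2}$. Combining the first with the Carleson hypothesis $I(h)\le C(g,p)^2\|h\|_{\mathcal{H}^p}^p$ and with $\|\mathbf{T}_g h\|_{\mathcal{H}^p}^p\asymp J(h)$ yields, after a one-step self-improvement, that the Carleson condition implies boundedness when $p\ge2$. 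The second inequality, iterated along the orbit $f,\mathbf{T}_g f,\mathbf{T}_g^2 f,\dots$ — where the exponent $\tfrac{2-p}{2}<1$ forces the tail $I(\mathbf{T}_g^N f)^{((2-p)/2)^N}\to1$ under a crude a priori bound — shows conversely that boundedness implies the Carleson condition when $0<p<2$. Throughout, a priori finiteness is secured by first taking $f$, and after a truncation also $g$, to be a Dirichlet polynomial and then passing to the limit.

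The two remaining implications — that the Carleson condition is sufficient when $0<p<2$ and necessary when $p>2$ — are the heart of the matter, and I expect them to be the main obstacle: here the weight $|(\mathbf{T}_g f)_\chi|^{p-2}$ is a negative power, so it cannot be bounded from above and the admissible Hölder splitting is unavailable. The plan is to adapt the argument of Pau \cite{Pau13}: replace the vertical square function in \eqref{eq:LPp} by the corresponding area/tent-space description of $\|\mathbf{T}_g f\|_{\mathcal{H}^p}$ and prove, for every $0<p<\infty$, an embedding of the form $\int(\text{area of }\mathbf{T}_g f)^p\ll\int|f|^p\,d\mu_g$ directly from the Carleson property of $\mu_g$, by a stopping-time and maximal-function decomposition in place of Hölder. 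The genuine difficulty — and the point where the infinite-dimensional setting intervenes — is that $\mu_g$ is Carleson for $\mathcal{H}^p$ only in the \emph{averaged} sense of \eqref{eq:tempo}; it is \emph{not} Carleson fibrewise, since $g_\chi$ need not lie in $\BMOA(\mathbb{C}_0)$ for any single $\chi$ (cf. the discussion following Corollary~\ref{cor:chiest}). Hence Pau's one-variable argument cannot simply be run on each half-plane fibre $\mathbb{C}_0$ and integrated in $\chi$; the stopping-time geometry and the maximal function must be set up on the product $\mathbb{T}^\infty\times(0,\infty)$ itself, with Carleson boxes adapted to the Haar structure of $\mathbb{T}^\infty$. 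Once both implications are in hand, tracking the constants through the Hölder and embedding estimates gives the norm equivalence $\|\mathbf{T}_g\|_{\mathcal{L}(\mathcal{H}^p)}\asymp C(g,p)$.
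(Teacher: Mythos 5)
The two implications you do carry out --- sufficiency of \eqref{eq:tempo} for $p\ge 2$ and necessity for $0<p<2$ --- are exactly the paper's argument: the same two H\"older splittings of $|(\mathbf{T}_g f)_\chi|^{p-2}|f_\chi|^2$, the same self-improvement step (the paper absorbs $\|\mathbf{T}_g f\|_{\mathcal{H}^p}^{p-2}$, resp.\ a power of $C(g,p)$, after securing a priori finiteness by approximation, rather than iterating along the orbit $f,\mathbf{T}_g f,\dots$, but this is cosmetic). The genuine gap is that the other two implications --- the ones you yourself call the heart of the matter --- are left as a plan rather than a proof, and the plan points in a harder direction than is needed. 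You are right that $\mu_g$ is Carleson only in the averaged sense and that Pau's one-variable argument cannot be run fibrewise in $\chi$; but the resolution is not a stopping-time or tent-space decomposition with Carleson boxes adapted to $\mathbb{T}^\infty$. The paper needs no such geometry.

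What the paper does instead is the following. It uses two averaged one-variable characterizations that already integrate over $\chi$: the square function formula \eqref{eq:sqfcn} (cones $\Gamma_\tau$, from \cite{BP15}) and the non-tangential maximal function \eqref{eq:maxfcn}, whose $L^p\bigl((1+\tau^2)^{-1}d\tau\bigr)$-norm controls $\|f_\chi\|_{H^p_{\operatorname{i}}}$ by Gundy--Wheeden, hence controls $\|f\|_{\mathcal{H}^p}$ after integrating in $\chi$ via \eqref{eq:avgrotemb}. Lemma~\ref{lem:cones} (a pure Fubini computation) converts between $\int_{\mathbb{R}}\int_0^\infty(\cdot)\,d\mu$ and $\int_{\mathbb{R}}\int_{\Gamma_\tau}(\cdot)\,\frac{1+t^2}{\sigma}d\mu\,\frac{d\tau}{1+\tau^2}$. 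Then, e.g., for necessity with $p\ge2$: pass to cones by \eqref{eq:convert}, bound $|f_\chi|^{p-2}\le (f^\ast_\chi(\tau))^{p-2}$ on $\Gamma_\tau$, recognize $|f_\chi|^2|g'_\chi|^2=|(\mathbf{T}_g f)'_\chi|^2$, and apply H\"older with exponents $p/(p-2)$ and $p/2$ \emph{jointly in the variables $(\tau,\chi)$}; the first factor is $\|f\|_{\mathcal{H}^p}^{p-2}$ by the maximal function characterization and the second is $\|\mathbf{T}_g f\|_{\mathcal{H}^p}^2$ by the square function characterization. (The region $\sigma>1$ is handled separately by uniform pointwise bounds.) Sufficiency for $0<p<2$ is symmetric: start from \eqref{eq:sqfcn} for $\mathbf{T}_g f$, write $|f_\chi|^2\le (f^\ast_\chi(\tau))^{2-p}|f_\chi|^p$ on the cone, apply H\"older with exponents $2/(2-p)$ and $2/p$ in $(\tau,\chi)$, and finish with \eqref{eq:convert2} and the hypothesis \eqref{eq:tempo}. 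So the averaging over $\chi$ is dealt with simply by performing H\"older in the product measure $dm_\infty(\chi)\,\frac{d\tau}{1+\tau^2}$, with each factor closed off by an averaged characterization of $\mathcal{H}^p$ --- no fibrewise Carleson property and no stopping time enter. Until you supply an argument of this kind (or make your proposed decomposition on $\mathbb{T}^\infty\times(0,\infty)$ precise), the proof is incomplete for the cases $0<p<2$ sufficiency and $p>2$ necessity, and the claimed equivalence $C(g,p)\asymp\|\mathbf{T}_g\|_{\mathcal{L}(\mathcal{H}^p)}$ is not established.
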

We observe that if we restrict to only one variable, meaning that we consider only Dirichlet series over powers of a single prime, then the condition of Theorem~\ref{thm:avcarl} is independent of $p$ and reduces to the familiar one variable description of $\BMOA(\mathbb{D})$. 

Our proof of Theorem~\ref{thm:avcarl} adapts arguments from \cite{Pau13}, the main difference being that we will additionally integrate every quantity over $\mathbb{T}^\infty$. Before giving the proof, we collect some preliminary results. By using Fubini's theorem once more, we find that \eqref{eq:tempo} is equivalent to 
\begin{equation}
	\label{eq:avcarlesonp} \int_{\mathbb{T}^\infty} \int_\mathbb{R} \int_0^\infty |f_\chi(\sigma+it)|^p |g'_\chi(\sigma+it)|^2 \sigma\,d\sigma\,\frac{dt}{1+t^2}\,dm_\infty(\chi) \leq C(g,p)^2 \|f\|_{\mathcal{H}^p}^p. 
\end{equation}
The virtue of introducing an extra parameter in \eqref{eq:avcarlesonp} is that it allows us to apply techniques adapted to the conformally invariant Hardy space $H^p_{\operatorname{i}}(\mathbb{C}_0)$. In addition to the Littlewood--Paley formula \eqref{eq:LPp}, we will use the square function formula 
\begin{equation}
	\label{eq:sqfcn} \|f\|_{\mathcal{H}^p}^p \asymp |a_1|^p + \int_{\mathbb{T}^\infty} \int_{\mathbb{R}}\left(\int_{\Gamma_\tau} |f_\chi'( \sigma + it)|^2 \, d\sigma \, dt \right)^{p/2} \, \frac{d\tau}{1+\tau^2} \,dm_\infty(\chi), 
\end{equation}
which can be found in \cite[Thm.~7]{BP15}. Here, for $\tau$ in $\mathbb{R}$, $\Gamma_\tau$ is the cone 
\begin{equation*}
	\Gamma_\tau = \{ \sigma + i t \, : \, |t - \tau | < \sigma\}. 
\end{equation*}
For a holomorphic function $f$ in $\mathbb{C}_0$, let $f^\ast$ denote the non-tangential maximal function 
\begin{equation}
	\label{eq:maxfcn} f^\ast(\tau) := \sup_{s \in \Gamma_\tau} |f(s)|, \qquad \tau \in \mathbb{R}. 
\end{equation}
Since $1/(1+\tau^2)$ is a Muckenhoupt $A_q$-weight for all $q > 1$, it follows from the work of Gundy and Wheeden \cite{GW7374} that $f$ is in $H^p_\mathrm{i}(\mathbb{C}_0)$ if and only if $f^\ast$ is in $L^p_\mathrm{i}(\mathbb{R}) = L^p \left((1+\tau^2)^{-1} \, d\tau\right)$ for $0 < p < \infty$, with comparable norms. 
\begin{lem}
	\label{lem:cones} Let $\varphi$ be a function and $\mu$ a positive measure on $\{\sigma + it \, : \, 0 < \sigma < 1\}$. Then 
	\begin{equation}
		\label{eq:convert} \int_{\mathbb{R}} \int_0^1 |\varphi(\sigma+it)| d \mu(\sigma, t) \asymp \int_{\mathbb{R}} \int_{\Gamma_\tau} |\varphi(\sigma+it)| \frac{1+t^2}{\sigma} d\mu(\sigma,t) \, \frac{d\tau}{1+\tau^2}. 
	\end{equation}
	If $\mu$ is a positive measure on all of\/ $\mathbb{C}_0$, then 
	\begin{equation}
		\label{eq:convert2} \int_{\mathbb{R}} \int_0^\infty |\varphi(\sigma+it)| d \mu(\sigma, t) \gg \int_{\mathbb{R}} \int_{\Gamma_\tau} |\varphi(\sigma+it)| \frac{1+t^2}{\sigma} d\mu(\sigma,t) \, \frac{d\tau}{1+\tau^2}. 
	\end{equation}
\end{lem}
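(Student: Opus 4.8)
The plan is to obtain both displays from a single application of Tonelli's theorem, reducing each to a pointwise estimate for a scalar weight. Since all integrands are nonnegative, the order of integration on the right-hand sides may be reversed freely. The only coupling between $\tau$ and $(\sigma,t)$ is the requirement $\sigma+it\in\Gamma_\tau$, which by the definition of $\Gamma_\tau$ means $|t-\tau|<\sigma$, that is, $\tau\in(t-\sigma,t+\sigma)$. Performing the inner $\tau$-integration first therefore recasts each right-hand side as
\[ \int\!\!\int |\varphi(\sigma+it)|\,W(\sigma,t)\,d\mu(\sigma,t),\qquad W(\sigma,t):=\frac{1+t^2}{\sigma}\int_{t-\sigma}^{t+\sigma}\frac{d\tau}{1+\tau^2}, \]
whereas the left-hand sides equal $\int\!\!\int|\varphi|\,d\mu$. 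Thus \eqref{eq:convert} amounts to the two-sided bound $W\asymp 1$ on the strip, and \eqref{eq:convert2} to the one-sided bound $W\ll 1$ on all of $\mathbb{C}_0$.

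I would settle the strip case first, which is the clean part. The elementary point is that for $0<\sigma<1$ one has $1+\tau^2\asymp 1+t^2$ uniformly for $\tau\in(t-\sigma,t+\sigma)$: when $|t|\le 2$ one has $|\tau|<3$, so both quantities lie between fixed absolute constants, and when $|t|>2$ one has $|\tau|\in(|t|/2,\,2|t|)$, whence $\tau^2\asymp t^2$. Consequently $\int_{t-\sigma}^{t+\sigma}(1+\tau^2)^{-1}\,d\tau\asymp 2\sigma/(1+t^2)$, and multiplying by $(1+t^2)/\sigma$ yields $W\asymp 1$, which is \eqref{eq:convert}. The comparison uses $\sigma<1$ only to keep the interval $(t-\sigma,t+\sigma)$ within a bounded multiple of $t$; the identical argument gives $W\ll 1$ on the larger region $\{|t|\ge 2\sigma\}$ of $\mathbb{C}_0$, which already accounts for every point lying well outside the vertex region of the cones.

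The step I expect to require the most care is the complementary region $\{|t|<2\sigma\}$ that remains for the full-plane inequality \eqref{eq:convert2}. There the interval $(t-\sigma,t+\sigma)$ meets, or nearly meets, the origin, the comparison $1+\tau^2\asymp 1+t^2$ breaks down, and one is left only with the crude bound $\int_{t-\sigma}^{t+\sigma}(1+\tau^2)^{-1}\,d\tau\le\pi$, so that $W(\sigma,t)\ll (1+t^2)/\sigma$. On the part of this region with $\sigma\ge 1$ (the part with $\sigma<1$ being already covered by the strip estimate) this grows without bound: along the ray $t=\sigma$ one has $W\asymp\sigma\to\infty$. Hence a uniform pointwise weight bound is not available, and only the one-sided direction $\gg$ can be expected to survive on all of $\mathbb{C}_0$ (the weight may degenerate to $0$ as $\sigma\to\infty$, but it never forces the reverse inequality). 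To close \eqref{eq:convert2} I would split $\mathbb{C}_0$ into $\{|t|\ge 2\sigma\}$, where $W\ll 1$ disposes of matters directly, and $\{|t|<2\sigma\}$, where rather than the weight I would invoke the decay carried by the measures to which the lemma is actually applied: in \eqref{eq:sqfcn} and in the proof of Theorem~\ref{thm:avcarl} the relevant $d\mu$ carries both the factor $(1+t^2)^{-1}$ and the rapid decay of $g_\chi'$ in $\sigma$ (its leading term decaying like $2^{-\sigma}$), which together dominate the linear growth of $W$ in the offending region. Checking that this decay indeed absorbs $W$ on $\{|t|<2\sigma\}$ is the crux of the full-plane estimate.
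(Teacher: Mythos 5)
Your reduction of both displays, via Tonelli, to the pointwise behaviour of the weight $W(\sigma,t)=\frac{1+t^2}{\sigma}\int_{t-\sigma}^{t+\sigma}\frac{d\tau}{1+\tau^2}$ is exactly the paper's own argument (phrased there through the set $I(\sigma+it)=\{\tau:\sigma+it\in\Gamma_\tau\}$), and your proof of \eqref{eq:convert} --- the two-sided comparison $1+\tau^2\asymp1+t^2$ for $\tau\in(t-\sigma,t+\sigma)$ when $0<\sigma<1$ --- is complete and correct.

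For \eqref{eq:convert2} you have put your finger on a genuine defect, but you have not repaired it. The paper disposes of \eqref{eq:convert2} by asserting that $\int_{I(\sigma+it)}(1+\tau^2)^{-1}\,d\tau\ll\sigma/(1+t^2)$ for all $0<\sigma<\infty$, i.e.\ that $W\ll1$ on all of $\mathbb{C}_0$; your computation along $t=\sigma$, where $W(\sigma,\sigma)=\frac{1+\sigma^2}{\sigma}\arctan(2\sigma)\sim\frac{\pi}{2}\sigma$, shows that this assertion is false. Worse, taking $\mu$ to be a unit point mass at $R+iR$ and $\varphi\equiv1$ makes the left side of \eqref{eq:convert2} equal to $1$ and the right side asymptotic to $\frac{\pi}{2}R$, so the second half of the lemma is actually false for general positive measures on $\mathbb{C}_0$, not merely unprovable by a pointwise weight bound. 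Your proposed rescue --- deferring the region $\{|t|<2\sigma,\ \sigma\ge1\}$ to ``the decay carried by the measures to which the lemma is applied'' --- is the right instinct but is (i) left entirely unchecked and (ii) at best a statement about the particular measures $d\mu_{g,\chi}=|g'_\chi(\sigma+it)|^2\,\sigma\,d\sigma\,dt/(1+t^2)$ arising in Theorem~\ref{thm:avcarl}, not about the lemma as stated. The absorption does work there: on that region $W\ll\sigma$, while for $\sigma\ge1$ one has $|g'_\chi(\sigma+it)|\ll_g 2^{-\sigma}$ and $|f_\chi(\sigma+it)|\ll\|f\|_{\mathcal{H}^p}$ uniformly in $\chi$ and $t$, so the offending contribution is finite and controlled exactly as the $\sigma\ge1$ range is handled elsewhere in that proof. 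To turn your proposal into a proof you must either carry out this estimate explicitly or restate the lemma with a hypothesis that excludes the counterexample (for instance, that $\mu$ be supported on a strip $0<\sigma<A$, or that the integral of $|\varphi|\,(1+t^2)\sigma^{-1}$ over $\{\sigma\ge1\}$ against $\mu$ be dominated by the left-hand side); as written, neither your argument nor the paper's establishes \eqref{eq:convert2}.
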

\begin{proof}
	For $\sigma + it$ in $\mathbb{C}_0$, we consider the set $I(\sigma + it) := \{\tau \in \mathbb{R} \, : \, \sigma + it \in \Gamma_\tau\}$. A computation shows that
	\[\int_{I(\sigma + it)} \frac{d \tau}{1+\tau^2} \asymp \frac{\sigma}{1+t^2}, \qquad 0 < \sigma \leq 1\]
	and that
	\[\int_{I(\sigma + it)} \frac{d \tau}{1+\tau^2} \ll \frac{\sigma}{1+t^2}, \qquad 0 < \sigma < \infty.\]
	The estimates \eqref{eq:convert} and \eqref{eq:convert2} now follow from Fubini's theorem. 
\end{proof}
\begin{proof}
	[Proof of Theorem~\ref{thm:avcarl}] We may assume that $g$ is in $\mathcal{H}^p$ since otherwise $\mathbf{T}_g$ is trivially unbounded. Thus, for almost every $\chi$ in $\mathbb{T}^\infty$, the measure
	\[\mu_{g,\chi}(\sigma,t) = |g'_\chi(\sigma+it)|^2\,\sigma d\sigma\,\frac{dt}{1+t^2}\]
	is well-defined on $\mathbb{C}_0$.
	
	Suppose first that $p \geq 2$ and that \eqref{eq:avcarlesonp} is satisfied. Then by the Littlewood--Paley formula \eqref{eq:LPp}, H\"{o}lder's inequality, and two applications of \eqref{eq:avcarlesonp}, we have that 
	\begin{align*}
		\|\mathbf{T}_g f\|_{\mathcal{H}^p}^p &\asymp \int_{\mathbb{T}^\infty} \int_{\mathbb{R}}\int_0^\infty |(\mathbf{T}_g f)_\chi(\sigma+it)|^{p-2}|f_\chi(\sigma+it)|^2 d \mu_{g,\chi}(\sigma, t) \, dm_\infty(\chi) \\
		&\leq \left( \int_{\mathbb{T}^\infty} \int_{\mathbb{R}}\int_0^\infty |(\mathbf{T}_g f)_\chi|^{p} d \mu_{g,\chi}(\sigma,t) \, dm_\infty(\chi) \right)^{\frac{p-2}{p}} \left( \int_{\mathbb{T}^\infty} \int_{\mathbb{R}}\int_0^\infty |f_\chi|^{p} d \mu_{g,\chi}(\sigma,t) \, dm_\infty(\chi) \right)^{\frac{2}{p}} \\
		&\ll C(g,p)^2 \|\mathbf{T}_g f\|_{\mathcal{H}^p}^{p-2} \|f\|_{\mathcal{H}^p}^2, 
	\end{align*}
	giving us that $\|\mathbf{T}_g f\|_{\mathcal{H}^p} \ll C(g,p) \|f\|_{\mathcal{H}^p}.$
	
	Suppose now that $\mathbf{T}_g$ acts boundedly on $\mathcal{H}^p$, still considering $p \geq 2$. By \eqref{eq:convert}, H\"older's inequality, \eqref{eq:maxfcn}, \eqref{eq:avgrotemb}, and the square function characterization, we have 
	\begin{align*}
		\int_{\mathbb{T}^\infty} \int_\mathbb{R} \int_0^1 |f_\chi|^p d\mu_{g,\chi} \,dm_\infty &\ll \int_{\mathbb{T}^\infty} \int_{\mathbb{R}} \int_{\Gamma_\tau} |f_\chi(\sigma + it)|^p |g'_\chi(\sigma + it)|^2 d\sigma dt \frac{d \tau}{1+\tau^2} dm_\infty(\chi) \\
		&\leq \int_{\mathbb{T}^\infty} \int_{\mathbb{R}} \left(f^\ast_\chi(\tau)\right)^{p-2} \int_{\Gamma_\tau} |(\mathbf{T}_g f)_\chi'|^2 d \sigma dt \frac{d \tau}{1+\tau^2} dm_\infty(\chi) \\
		&\leq \|f\|_{\mathcal{H}^p}^{p-2} \|\mathbf{T}_g f\|_{\mathcal{H}^p}^2 \ll \|\mathbf{T}_g\|_{\mathcal{L}(\mathcal{H}^p)}^2 \|f\|_{\mathcal{H}^p}^p. 
	\end{align*}
	The remaining integral can be estimated using the uniform pointwise estimates that hold for $f$ and $g$ in $\mathcal{H}^p$ in the half-plane $\mre(s)\geq1$, yielding that
	\[\int_{\mathbb{T}^\infty} \int_\mathbb{R} \int_1^\infty |f_\chi|^p d\mu_{g,\chi}(\sigma,t) \,dm_\infty(\chi) \ll \|f\|_{\mathcal{H}^p}^p\|g\|_{\mathcal{H}^p}^{2} \leq \|f\|_{\mathcal{H}^p}^p\|\mathbf{T}_g\|_{\mathcal{L}(\mathcal{H}^p)}^{2}.\]
	
	Suppose now that $0 < p < 2$ and that \eqref{eq:avcarlesonp} is satisfied. Using the square function characterization \eqref{eq:sqfcn}, \eqref{eq:maxfcn}, H\"older's inequality, \eqref{eq:avgrotemb}, and \eqref{eq:convert2}, we obtain 
	\begin{align*}
		\|\mathbf{T}_g f\|_{\mathcal{H}^p}^p &\asymp \int_{\mathbb{T}^\infty} \int_{\mathbb{R}}\left(\int_{\Gamma_\tau} |f_\chi(\sigma +it)|^2|g'_\chi(\sigma + it)|^2 \, d\sigma \, dt \right)^{p/2} \, \frac{d\tau}{1+\tau^2} \,dm_\infty(\chi) \\
		&\leq \int_{\mathbb{T}^\infty} \int_{\mathbb{R}} \left(f^\ast_\chi(\tau)\right)^{\frac{(2-p)p}{2}}\left(\int_{\Gamma_\tau} |f_\chi(\sigma +it)|^p|g'_\chi(\sigma + it)|^2 \, d\sigma \, dt \right)^{p/2} \, \frac{d\tau}{1+\tau^2} \,dm_\infty(\chi) \\
		&\leq \|f\|_{\mathcal{H}^p}^{\frac{(2-p)p}{2}} \left( \int_{\mathbb{T}^\infty} \int_{\mathbb{R}} \int_{\Gamma_\tau} |f_\chi(\sigma +it)|^p|g'_\chi(\sigma + it)|^2 \, d\sigma \, dt \, \frac{d\tau}{1+\tau^2} \,dm_\infty(\chi) \right)^{\frac{p}{2}} \\
		&\ll \|f\|_{\mathcal{H}^p}^{\frac{(2-p)p}{2}} \left( \int_{\mathbb{T}^\infty} \int_{\mathbb{R}} \int_0^\infty |f_\chi(\sigma +it)|^p d\mu_{g,\chi}(\sigma,t) \, \,dm_\infty(\chi) \right)^{\frac{p}{2}} \\
		&\leq \|f\|_{\mathcal{H}^p}^{\frac{(2-p)p}{2}} C(g,p)^p \|f\|_{\mathcal{H}^p}^{\frac{p^2}{2}} = C(g,p)^p \|f\|_{\mathcal{H}^p}^{p}. 
	\end{align*}
	
	Finally we deal with the case when $0 < p < 2$ and $\mathbf{T}_g \colon \mathcal{H}^p \to \mathcal{H}^p$ is bounded. Note first that by the Littlewood--Paley formula \eqref{eq:LPp}, we have
	\[ \|\mathbf{T}_g f\|_{\mathcal{H}^p}^p \asymp \int_{\mathbb{T}^\infty} \int_\mathbb{R} \int_0^\infty |(\mathbf{T}_g)_\chi|^{p-2} |f_\chi|^2 d\mu_{g,\chi}(\sigma,t) \,dm_\infty(\chi).\]
	Using H\"older's inequality and this identity, we obtain 
	\begin{align*}
		\int_{\mathbb{T}^\infty} \int_\mathbb{R} \int_0^\infty |f_\chi|^p d\mu_{g,\chi}(\sigma,t) \,dm_\infty(\chi) &\ll \|\mathbf{T}_g f\|_{\mathcal{H}^p}^{\frac{p^2}{2}} \left( \int_{\mathbb{T}^\infty} \int_\mathbb{R} \int_0^\infty |(\mathbf{T}_gf)_\chi|^{p} d\mu_{g,\chi}(\sigma,t) \,dm_\infty(\chi)\right)^{\frac{2-p}{2}} \\
		&\leq \|\mathbf{T}_g f\|_{\mathcal{H}^p}^{\frac{p^2}{2}} C(g,p)^{2-p} \|\mathbf{T}_g f\|_{\mathcal{H}^p}^{\frac{p(2-p)}{2}} \\
		&\leq C(g,p)^{2-p} \|\mathbf{T}_g\|_{\mathcal{L}(\mathcal{H}^p)}^p \|f\|_{\mathcal{H}^p}^p. 
	\end{align*}
	By an approximation argument, we can a priori assume that $C(g,p)$ is finite. Then, by taking the supremum over norm-$1$ Dirichlet series $f$, we obtain that $C(g,p) \ll \|\mathbf{T}_g\|_{\mathcal{L}(\mathcal{H}^p)},$ as desired. 
\end{proof}

\subsection{Necessary and sufficient conditions} \label{sec:embedsuff}
Theorem~\ref{thm:avcarl} can be applied to find necessary and sufficient conditions for membership in $\mathcal{X}_p$, parallel to the result for $\mathcal{X}_2$ proved in Theorem~\ref{thm:bmocond}. However, there is one essential difficulty when passing from $p=2$ to the general case $0<p<\infty$, namely that the proof of part (c) of Theorem~\ref{thm:bmocond} relies on the local embedding property of $\mathcal{H}^2$ expressed by \eqref{eq:localiemb}. The local embedding extends trivially to hold for $p=2k$, for every positive integer $k$, since 
\begin{equation}
	\label{eq:emb2k} \|f\|_{H^{2k}_{\operatorname{i}}(\mathbb{C}_{1/2})}^{2k} = \big\|f^k\big\|_{H^{2}_{\operatorname{i}}(\mathbb{C}_{1/2})}^{2} \leq \widetilde{C}\big\|f^k\big\|_{\mathcal{H}^{2}}^{2} = \widetilde{C} \|f\|_{\mathcal{H}^{2k}}^{2k}, 
\end{equation}
but it is a well-known open problem whether it holds for any other $p$. We refer to \cite[Sec.~3]{SS09} for a discussion of the embedding problem.

Arguing similarly for the embedding constant \eqref{eq:cpgdef}, we find for every positive integer $n$ that 
\begin{equation}
	\label{eq:cpgn} C(g,p)\geq C(g,np). 
\end{equation}
We will use this to prove a rather curious incomplete analogue to part (c) of Theorem~\ref{thm:bmocond}. In view of \eqref{eq:emb2k} and \eqref{eq:cpgn}, we are allowed to apply integral powers before and after using the local embedding property of $\mathcal{H}^2$, leading us to the expected necessary condition for $g$ to belong to $\mathcal{X}_p$, but only for rational $p$.
\begin{thm}
	\label{thm:bmoxp} Suppose that $g$ is in $\mathcal{D}$.
	\begin{itemize}
		\item[(a)] If $g$ is in $\BMOA(\mathbb{C}_0)$, then $\mathbf{T}_g$ is bounded from $\mathcal{H}^p$ to $\mathcal{H}^p$. 
		\item[(b)] If $g$ is in $\mathcal{X}_p$, then $g$ satisfies condition {\normalfont (iii)} from Lemma~\ref{lem:chibmo}. 
		\item[(c)] If $g$ is in $\mathcal{X}_p$ and $p$ is in $\mathbb{Q}_+$, then $g$ is in $\BMOA(\mathbb{C}_{1/2})$. 
	\end{itemize}
\end{thm}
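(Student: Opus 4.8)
The plan is to treat the three parts separately, using the Carleson measure characterization of Theorem~\ref{thm:avcarl} together with two structural facts: the local embedding holds at every even integer exponent, \eqref{eq:emb2k}, and $C(g,p) \geq C(g,np)$, so that $g \in \mathcal{X}_p$ implies $g \in \mathcal{X}_{np}$ for every positive integer $n$, \eqref{eq:cpgn}. For~(a), it suffices to verify the Carleson condition~\eqref{eq:avcarlesonp}. Since $g \in \mathcal{D} \cap \BMOA(\mathbb{C}_0)$, Lemma~\ref{lem:chibmo}(ii) gives $g_\chi \in \BMOA(\mathbb{C}_0)$ with $\|g_\chi\|_{\BMO} = \|g\|_{\BMO}$ for every character $\chi$, and Lemma~\ref{lem:carlesonbmohalfplane} shows that for each $\chi$ the measure $|g_\chi'|^2\,\sigma\,d\sigma\,\frac{dt}{1+t^2}$ is Carleson for $H^p_{\mathrm{i}}(\mathbb{C}_0)$ with norm comparable to $\|g\|_{\BMO(\mathbb{C}_0)}^2$. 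Integrating the resulting estimate $\int_{\mathbb{R}}\int_0^\infty |f_\chi|^p|g_\chi'|^2\,\sigma\,d\sigma\,\frac{dt}{1+t^2} \ll \|g\|_{\BMO(\mathbb{C}_0)}^2\,\|f_\chi\|_{H^p_{\mathrm{i}}(\mathbb{C}_0)}^p$ over $\mathbb{T}^\infty$ and invoking~\eqref{eq:avgrotemb} yields~\eqref{eq:avcarlesonp}, so $\mathbf{T}_g$ is bounded on $\mathcal{H}^p$ by Theorem~\ref{thm:avcarl}; this is the exact $p$-analogue of part~(a) of Theorem~\ref{thm:bmocond}.

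For~(b), note that condition~(iii) of Lemma~\ref{lem:chibmo} concerns $\mathcal{B}g$ on $\mathbb{T}^\infty$ and is independent of $p$; using~\eqref{eq:cpgn} I may therefore replace $p$ by $kp$ for an integer $k$ with $kp \geq 1$ and assume $p \geq 1$. Pommerenke's trick then applies as in Theorem~\ref{thm:bmocond}(b): after subtracting the constant term (which alters neither $\mathbf{T}_g$ nor the exponential integrability of $\mathcal{B}g$), the identity $g^{n+1} = (n+1)\,\mathbf{T}_g(g^n)$ gives $\|g^n\|_{\mathcal{H}^p} \leq n!\,\|\mathbf{T}_g\|_{\mathcal{L}(\mathcal{H}^p)}^n$. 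Since $p \geq 1$, the triangle inequality in $L^p(\mathbb{T}^\infty)$ bounds $\big\|e^{(c/p)|\mathcal{B}g|}\big\|_{L^p}$ by $\sum_{n \geq 0}\big(c\,\|\mathbf{T}_g\|/p\big)^n$, which converges for $c < p/\|\mathbf{T}_g\|$; hence $\exp(c|\mathcal{B}g|)$ is integrable, which is condition~(iii).

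For~(c), rationality enters through a reduction to an even integer exponent. Writing $p = a/b$ and applying~\eqref{eq:cpgn} with $n = 2b$, I may assume $p = 2k$ is an even integer, so that the local embedding~\eqref{eq:emb2k} is at my disposal. The target is to show that $\mu_g(s) = |g'(\sigma+it)|^2\,(\sigma - 1/2)\,d\sigma\,\frac{dt}{1+t^2}$ is a Carleson measure for the single-variable space $H^2(\mathbb{C}_{1/2})$; granting this, \cite[Thm.~3]{OS12} and Lemma~\ref{lem:carlesonbmohalfplane} give $g(s)/(s+1/2) \in \BMO(\mathbb{C}_{1/2})$, and the factor $(s+1/2)^{-1}$ is removed exactly as in Theorem~\ref{thm:bmocond}(c) — via the local embedding and $g \in \mathcal{H}^2$ (from~(b)) for intervals $|I| \geq 1$, and via the invariance $\|\mathbf{T}_{g_\chi}\| = \|\mathbf{T}_g\|$ under twisting by characters for $|I| < 1$. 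To obtain the Carleson property, I would localize the necessity argument of Theorem~\ref{thm:avcarl} (for the exponent $2k \geq 2$) to the half-plane $\mathbb{C}_{1/2}$: the half-plane square function characterization of $H^{2k}_{\mathrm{i}}(\mathbb{C}_{1/2})$, the non-tangential maximal function, the identity $(\mathbf{T}_g f)' = f g'$, and the local embedding~\eqref{eq:emb2k} together yield $\int_{\mathbb{R}}\int_{1/2}^\infty |f|^{2k}\,d\mu_g \ll \|f\|_{\mathcal{H}^{2k}}^{2k}$, that is, $\mu_g$ is $\mathcal{H}^{2k}$-Carleson in $\mathbb{C}_{1/2}$. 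Testing this on $k$-th powers $f = F^k$, for which $\|F^k\|_{\mathcal{H}^2} = \|F\|_{\mathcal{H}^{2k}}^k$, recovers the exponent-$2$ Carleson inequality needed to apply \cite{OS12}.

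The main obstacle is precisely this exponent-matching in~(c). The local embedding is known only at even integer exponents, which is what forces the detour through~\eqref{eq:cpgn} and restricts the conclusion to rational $p$; moreover, the passage from the $\mathcal{H}^{2k}$-Carleson condition back to the exponent-$2$ condition of \cite{OS12} through $k$-th powers must be arranged so that the test family remains rich enough to detect the box condition. A secondary technical point is the factor $|f|^{2k-2}$ arising from the $2k$-square-function computation, which I propose to absorb with the non-tangential maximal function $f^\ast$, precisely as in the necessity part of the proof of Theorem~\ref{thm:avcarl}.
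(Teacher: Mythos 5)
Your proposal follows the paper's proof in all essentials: part (a) is the $p$-analogue of Theorem~\ref{thm:bmocond}(a) obtained by integrating the one-variable Carleson estimate of Lemma~\ref{lem:carlesonbmohalfplane} over $\mathbb{T}^\infty$; part (b) is Pommerenke's trick (your reduction to $p\geq 1$ via \eqref{eq:cpgn} to legitimize the triangle inequality is a sensible precaution the paper leaves implicit); and part (c) uses exactly the paper's mechanism of \eqref{eq:cpgn} plus the even-integer local embedding \eqref{eq:emb2k} plus a localized version of the necessity half of Theorem~\ref{thm:avcarl}, followed by the removal of the factor $(s+1/2)^{-1}$ as in Theorem~\ref{thm:bmocond}(c).

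The one place where you diverge --- and where, as you yourself flag, your argument is not complete as written --- is the passage from the $\mathcal{H}^{2k}$-Carleson condition for $\nu_g$ on $\mathbb{C}_{1/2}$ down to the exponent-$2$ condition needed for \cite[Thm.~3]{OS12}. Testing $\int |f|^{2k}\,d\nu_g \ll \|f\|_{\mathcal{H}^{2k}}^{2k}$ on $f=F^k$ yields the inequality $\int |h|^2\,d\nu_g\ll\|h\|_{\mathcal{H}^2}^2$ only for $h$ in the proper subclass of $k$-th powers, not for all of $\mathcal{H}^2$, so you cannot literally ``recover the exponent-$2$ Carleson inequality'' and then quote \cite{OS12} as a black box; you would have to open up that proof and verify it only tests reproducing kernels (which are indeed $k$-th powers of $\mathcal{H}^{2k}$ functions, so the repair is available). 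The paper orders the steps differently to avoid this: it first observes that the argument of \cite[Thm.~3]{OS12} produces uniform estimates at exponent $2k$, concluding that $\nu_g$ is Carleson for the conformally invariant space $H^{2k}_{\operatorname{i}}(\mathbb{C}_{1/2})$, and only then descends to $H^{2}_{\operatorname{i}}(\mathbb{C}_{1/2})$ using the fact that Carleson measures for $H^p_{\operatorname{i}}$ of a half-plane are independent of $p$ (via inner--outer factorization). Either route requires some inspection of \cite{OS12}, but the paper's ordering sidesteps the richness-of-test-family issue entirely; you should adopt it or explicitly carry out the kernel verification.
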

\begin{proof}
	The proof of (a) is identical to the proof given for $p=2$ in Theorem~\ref{thm:bmocond}, using Theorem~\ref{thm:avcarl}, \eqref{eq:avcarlesonp}, and that Carleson measures in one variable are independent of $p$. The proof of (b) is also the same.
	
	For (c) we need two facts which follow from close inspection of the proof of Theorem~\ref{thm:avcarl}. First of all, it is clear from the first part of the proof that for $p \geq 2$ there is a constant $C_1$, independent of $p$, such that
	\[ \|\mathbf{T}_g\|_{\mathcal{L}(\mathcal{H}^p)} \leq C_1 C(g,p), \]
	where $C(g,p)$ is as in Theorem~\ref{thm:avcarl}. Hence, we conclude by \eqref{eq:cpgn} that there is a constant $C_2$ such that for every positive integer $n$ we have 
	\begin{equation}
		\label{eq:tgrise} \|\mathbf{T}_g\|_{\mathcal{L}(\mathcal{H}^{np})} \leq C_2 \|\mathbf{T}_g\|_{\mathcal{L}(\mathcal{H}^{p})}. 
	\end{equation}
	Secondly, by mimicking the next part of the proof, also for $p \geq 2$, we see that there is a constant $C_3$ such that 
	\begin{equation}
		\label{eq:carlbound} \int_{\mathbb{R}}\int_{1/2}^1 |f(s)|^p|g'(s)|^2(\sigma-1/2)d\sigma\frac{dt}{1+t^2} \leq C_3 \|\mathbf{T}_g f\|^2_{H^p_{\operatorname{i}}(\mathbb{C}_{1/2})} \| f\|^{p-2}_{H^p_{\operatorname{i}}(\mathbb{C}_{1/2})}, 
	\end{equation}
	at least for Dirichlet polynomials $f$. Here we have implicitly applied the maximal function characterization of $H^p_{\operatorname{i}}(\mathbb{C}_{1/2})$. However, by the inner-outer factorization of $H^p_{\operatorname{i}}$, we see that the constants involved do not blow up as $p \to \infty$.
	To prove the theorem, let $p = 2k/n > 0$ be a rational number. Hence, by \eqref{eq:tgrise}, $\mathbf{T}_g$ is bounded on $\mathcal{H}^{2k}$, with control of the constant. Combined with \eqref{eq:carlbound} and the embedding \eqref{eq:emb2k}, we find, setting $C_4=\widetilde{C}$, that 
	\begin{align*}
		\int_{\mathbb{R}}\int_{1/2}^1 |f(s)|^{2k}|g'(s)|^2(\sigma-1/2)d\sigma\frac{dt}{1+t^2} &\leq C_3 \|\mathbf{T}_g f\|^2_{H^{2k}_{\operatorname{i}}(\mathbb{C}_{1/2})} \| f\|^{2(k-1)}_{H^{2k}_{\operatorname{i}}(\mathbb{C}_{1/2})} \\
		&\leq C_3 C_4^2 C_2^2 \|\mathbf{T}_g\|_{\mathcal{L}(\mathcal{H}^p)}^2 \|f\|_{\mathcal{H}^{2k}}^{2k}. 
	\end{align*}
	It follows that $\nu_g(\sigma+it) := |g'(s)|^2 (\sigma - 1/2) \, d\sigma dt/(1+t^2)$ is a Carleson measure for $\mathcal{H}^{2k}$, with constant uniformly bounded by $\|\mathbf{T}_g\|_{\mathcal{L}(\mathcal{H}^p)}^2$. Clearly, the argument in \cite[Thm.~3]{OS12} produces uniform estimates, so we conclude that $\nu_g$ is a Carleson measure on $H^{2k}_{\operatorname{i}}(\mathbb{C}_{1/2})$, with constant uniformly bounded by the same quantity. By appealing to the inner-outer factorization again, we conclude that there is a constant $C_5$ such that
	\[ \|\nu_g\|_{\mathrm{CM}(H^{2}_{\operatorname{i}})} \leq C_5 \|\mathbf{T}_g\|_{\mathcal{L}(\mathcal{H}^p)}^2 \leq C_6 C(g,p)^2. \]
	The proof is now completed by arguing as at the end of the proof of Theorem~\ref{thm:bmocond}. 
\end{proof}
Theorem~\ref{thm:zetabmo} now gives us an interesting example of a $\mathbf{T}_g$-operator that is bounded on all $\mathcal{H}^p$-spaces. 
\begin{cor}
	Let $g$ be as in  Theorem~\ref{thm:zetabmo}, i.e.,
	\[g(s) = \sum_{n=2}^\infty \frac{1}{n \log n}n^{-s}.\]
	Then $\mathbf{T}_g : \mathcal{H}^p \to \mathcal{H}^p$ is bounded for every $p < \infty$. 
\end{cor}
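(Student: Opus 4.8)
The plan is to deduce the corollary directly from two results already in hand, with essentially no new work required. First, Theorem~\ref{thm:zetabmo} tells us that the symbol $g(s)=\sum_{n\ge 2} (n\log n)^{-1} n^{-s}$ belongs to $\BMOA(\mathbb{C}_0)$. Second, part (a) of Theorem~\ref{thm:bmoxp} asserts that membership in $\BMOA(\mathbb{C}_0)$ is a sufficient condition for $\mathbf{T}_g$ to be bounded on $\mathcal{H}^p$ for the full range $0<p<\infty$. Combining these two facts immediately yields the claim.

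It is worth recalling why part (a) delivers boundedness for \emph{all} $p$, and not merely for rational $p$ as in the more delicate necessary condition of part (c). The sufficiency argument runs through the Carleson-measure characterization of Theorem~\ref{thm:avcarl}: one verifies the averaged Carleson condition \eqref{eq:avcarlesonp} for the symbol $g$, and for each fixed character $\chi$ the relevant measure $\mu_{g,\chi}$ lives on the half-plane $\mathbb{C}_0$ in a single complex variable. Since the Carleson embedding property in one variable is independent of $p$ and is governed precisely by the $\BMOA(\mathbb{C}_0)$-norm (Lemma~\ref{lem:carlesonbmohalfplane}), the uniform identity $\|g_\chi\|_{\BMO(\mathbb{C}_0)}=\|g\|_{\BMO(\mathbb{C}_0)}$ from Lemma~\ref{lem:chibmo}(ii) can be integrated over $\mathbb{T}^\infty$ to control the constant $C(g,p)$ of \eqref{eq:cpgdef} by $\|g\|_{\BMO(\mathbb{C}_0)}$, for every $0<p<\infty$.

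There is no genuine obstacle to overcome at this stage: all the difficulty has already been absorbed into the number-theoretic estimate behind Theorem~\ref{thm:zetabmo} and into establishing the sufficiency direction of the Carleson characterization. The only point meriting a moment's care is to note that we are invoking the $p$-independent \emph{sufficient} condition, so the rationality restriction appearing in part (c) of Theorem~\ref{thm:bmoxp} plays no role here, and the conclusion genuinely holds for every $p<\infty$.
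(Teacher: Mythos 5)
Your proposal is correct and coincides with the paper's own (implicit) argument: the corollary is obtained by combining Theorem~\ref{thm:zetabmo} with part (a) of Theorem~\ref{thm:bmoxp}, whose proof runs exactly through the Carleson-measure characterization of Theorem~\ref{thm:avcarl}, the $p$-independence of one-variable Carleson measures, and the translation invariance from Lemma~\ref{lem:chibmo}(ii), as you describe. Nothing further is needed.
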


\subsection{Linear symbols} \label{sec:embedlinear}
We will now extend Theorem~\ref{thm:linear} by proving that all linear symbols $g$ yield bounded operators $\mathbf{T}_g$ on $\mathcal{H}^p$, for the whole range $0<p<\infty$. We do this by showing that in this special case, the constant $C(g,p)$ in the Carleson measure condition \eqref{eq:tempo} may be chosen independently of $p$. 
\begin{thm}\label{thmlin}
Let 
\[g(s)=\sum_{j=1}^\infty b_j p_j^{-s}\]
be given. Then $\mathbf{T}_g$ is bounded on $\mathcal{H}^p$ if and only if $g$ is in $\mathcal{H}^2$. In fact, 
\[ \sup_{f\in \mathcal{H}^p, \| f\|_{\mathcal{H}^p}\le 1} \int_{\mathbb{T}^{\infty}} \int_{0}^\infty | f_{\chi}(\sigma)|^p |g'_{\chi} (\sigma)|^2 \sigma d\sigma dm_{\infty}(\chi)= \frac{1}{4} \| g\|_{\mathcal{H}^2}^2  \]
holds whenever $0<p<\infty$.
\end{thm}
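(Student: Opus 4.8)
The plan is to deduce the whole statement from the Carleson measure characterization of Theorem~\ref{thm:avcarl}. By that theorem $\mathbf{T}_g$ is bounded on $\mathcal{H}^p$ if and only if the constant $C(g,p)$ of \eqref{eq:cpgdef} is finite, and $C(g,p)\asymp\|\mathbf{T}_g\|_{\mathcal{L}(\mathcal{H}^p)}$. Since the supremum in the displayed identity is exactly $C(g,p)^2$, it suffices to prove that
\[ C(g,p)^2=\tfrac14\|g\|_{\mathcal{H}^2}^2=\tfrac14\sum_{j\ge1}|b_j|^2 \]
for every $0<p<\infty$; the equivalence ``$\mathbf{T}_g$ bounded $\iff g\in\mathcal{H}^2$'' is then immediate, as the right-hand side is finite precisely when $g\in\mathcal{H}^2$. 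Throughout I will use that $g'_\chi(\sigma)=-\sum_{j}b_j(\log p_j)\chi_j p_j^{-\sigma}$ and the elementary identity $\int_0^\infty(\log p_j)^2 p_j^{-2\sigma}\,\sigma\,d\sigma=\tfrac14$.

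For the lower bound I would test against $f\equiv1$, which has $\|f\|_{\mathcal{H}^p}=1$ and $f_\chi(\sigma)\equiv1$. Integrating $|g'_\chi(\sigma)|^2$ over $\mathbb{T}^\infty$ annihilates the cross terms $\chi_j\overline{\chi_k}$ with $j\ne k$ by orthogonality of the characters, leaving $\sum_j|b_j|^2(\log p_j)^2 p_j^{-2\sigma}$; integrating this against $\sigma\,d\sigma$ and applying the elementary identity gives exactly $\tfrac14\sum_j|b_j|^2$. Hence $C(g,p)^2\ge\tfrac14\|g\|_{\mathcal{H}^2}^2$, the supremum is attained at the constant function, and this computation is manifestly independent of $p$.

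The substance of the theorem is the matching upper bound $\int_{\mathbb{T}^\infty}\int_0^\infty|f_\chi(\sigma)|^p|g'_\chi(\sigma)|^2\sigma\,d\sigma\,dm_\infty(\chi)\le\tfrac14\|g\|_{\mathcal{H}^2}^2\|f\|_{\mathcal{H}^p}^p$. The one $p$-independent tool available is the dilation contraction $\int_{\mathbb{T}^\infty}|f_\chi(\sigma)|^p\,dm_\infty(\chi)=\|f(\,\cdot+\sigma)\|_{\mathcal{H}^p}^p\le\|f\|_{\mathcal{H}^p}^p$, valid because a horizontal shift of $f$ corresponds, after the Bohr lift, to evaluating $\mathcal{B}f$ at the interior point $(p_j^{-\sigma}\chi_j)_j$ of $\mathbb{D}^\infty$, which is a contraction on $H^p(\mathbb{D}^\infty)$ by subharmonicity of $|\mathcal{B}f|^p$. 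Expanding $|g'_\chi(\sigma)|^2$ and integrating in $\chi$, the diagonal part $\sum_j|b_j|^2(\log p_j)^2 p_j^{-2\sigma}$ is controlled exactly as in the lower bound: multiplying by $\|f(\,\cdot+\sigma)\|_{\mathcal{H}^p}^p\le\|f\|^p$ and integrating against $\sigma\,d\sigma$ returns $\tfrac14\|g\|^2\|f\|^p$.

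The obstacle, and where I expect the real work to concentrate, is the off-diagonal contribution $\sum_{j\ne k}b_j\overline{b_k}(\log p_j)(\log p_k)\int_0^\infty(p_jp_k)^{-\sigma}\big(\int_{\mathbb{T}^\infty}|f_\chi(\sigma)|^p\chi_j\overline{\chi_k}\,dm_\infty(\chi)\big)\sigma\,d\sigma$. These Fourier coefficients of $|f_\chi(\sigma)|^p$ are generally nonzero, and since $(p_jp_k)^{-\sigma}$ carries no oscillation the off-diagonal sum is not dominated pointwise in $\sigma$ by the diagonal; the crude majorant $\int_{\mathbb{T}^\infty}|f_\chi|^p|g'_\chi|^2\,dm_\infty\le\big(\sup_\chi|f_\chi(\sigma)|\big)^p\sum_j|b_j|^2(\log p_j)^2p_j^{-2\sigma}$ is useless because the $L^\infty$-norm is not controlled by $\|f\|_{\mathcal{H}^p}$. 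To absorb these terms I would pass to the square function formula \eqref{eq:sqfcn} applied to $\mathbf{T}_g f$, whose derivative factors as $(\mathbf{T}_g f)_\chi'=f_\chi g'_\chi$, and run inside each cone the continuous analogue of the Cauchy--Schwarz step $\big|\sum_{p\mid n}b_p(\log p)a_{n/p}\big|^2\le(\log n)\sum_{p\mid n}|b_p|^2(\log p)|a_{n/p}|^2$ that proves Theorem~\ref{thm:linear}, the linear (degree-one) structure ensuring that the resulting constant is independent of $p$. The hard part is carrying the \emph{sharp} constant $\tfrac14$ through for all $p$ simultaneously: for non-even $p$ neither the local embedding (known only for $p=2k$ via \eqref{eq:emb2k}) nor the coefficient identity behind Theorem~\ref{thm:linear} is available, so the cross-term bookkeeping must be performed entirely at the level of the Carleson integral, using only the dilation contraction and character orthogonality.
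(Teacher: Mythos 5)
Your reduction to the Carleson constant $C(g,p)$ via Theorem~\ref{thm:avcarl} is exactly the paper's strategy, and your lower bound (testing $f\equiv 1$ and using $\int_0^\infty(\log p_j)^2p_j^{-2\sigma}\sigma\,d\sigma=\tfrac14$) is correct and matches the paper. The diagonal estimate via the contraction $\int_{\mathbb{T}^\infty}|f_\chi(\sigma)|^p\,dm_\infty\le\|f\|_{\mathcal{H}^p}^p$ is also fine. But the proof stops precisely where you say the real work is: the off-diagonal terms are never controlled, and the fallback you sketch does not lead anywhere. Appealing to the square function formula \eqref{eq:sqfcn} for $\mathbf{T}_gf$ is structurally backwards here: for $p<2$ that formula is the tool by which the paper passes \emph{from} the Carleson bound \emph{to} boundedness of $\mathbf{T}_g$ (third part of the proof of Theorem~\ref{thm:avcarl}), so it cannot be used to establish the Carleson bound itself, let alone the exact constant $\tfrac14$; and the ``continuous analogue'' of the coefficient-level Cauchy--Schwarz step from Theorem~\ref{thm:linear} is not specified and has no evident pointwise version producing the weight $|f_\chi|^p$.

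The missing idea is the paper's Lemma~\ref{lempoint} applied with exponent $\alpha=p/2$: since $|f|^{p/2}$ is separately subharmonic, $|f_\chi(\sigma)|^{p/2}$ is dominated by the Poisson extension of $h_p:=|f_\chi|^{p/2}$ evaluated at $(p_j^{-\sigma}\chi_j)_j$. Expanding $h_p=\sum_{r\in\mathbb{Q}_+}c(r)\chi(r)$ and integrating term by term against the Poisson kernel gives $|f_\chi(\sigma)|^{p/2}\le\sum_{(m,n)=1}c(m/n)(mn)^{-\sigma}\chi(m/n)$, a ``two-sided Dirichlet series'' whose square can be multiplied by $|g'_\chi(\sigma)|^2$ and integrated in $\chi$ by character orthogonality. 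The cross terms then become an explicit double sum over coprime pairs with $\frac{m\mu}{n\nu}=\frac{p_j}{p_k}$, which the paper splits into two symmetric pieces ($p_j|m,\ p_k|n$ versus $p_j|m,\ p_k|\nu$) and bounds by Cauchy--Schwarz using $\sum_{p_j|m}\log p_j\le\log m$, the elementary bound $\frac{\log a\log b}{(\log ab)^2}\le\frac14$, and Parseval in the form $\sum_r|c(r)|^2=\|f\|_{\mathcal{H}^p}^p$. The choice of exponent $p/2$ is what makes the argument simultaneously $p$-independent and quantitatively sharp; without this device (or a genuine substitute for it) the off-diagonal sum is uncontrolled and the proof is incomplete.
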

It suffices to consider finitely many, say $d$, variables. The Poisson kernel on the polydisc is then given by
\[ P_{z}(w):=\prod_{j=1}^d \frac{1-|z_j|^2}{|1-\overline{w_j} z_j|^2}, \]
where $|z_j|<1$ and $w=(w_j)$ is a point on $\mathbb{T}^{d}$. Suppose that $0 < \alpha \leq p$ and that $f$ is in $H^p(\mathbb{D}^d)$.  Then $|f|^\alpha$ is separately subharmonic in each variable, which gives us the following.
\begin{lem}\label{lempoint}
If $f$ is in $H^p(\mathbb{D}^d)$, then 
\[ |f(z)|^{\alpha} \le \int_{\mathbb{T}^d} P_z(w) |f(w)|^{\alpha} dm_d(w) \]
for every point $z$ in $\mathbb{D}^d$ and $0<\alpha\le p$.
\end{lem}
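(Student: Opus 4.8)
The plan is to establish the estimate first for the dilated functions $f_r(w):=f(rw)$, $0<r<1$, which are holomorphic in a neighbourhood of $\overline{\mathbb{D}}^d$ and hence continuous up to the boundary, and then to recover the general case by letting $r\to1^-$. Since $f$ is holomorphic and $\alpha>0$, the function $|f|^\alpha=e^{\alpha\log|f|}$ is separately subharmonic in each of the $d$ variables, being the composition of the convex increasing function $t\mapsto e^{\alpha t}$ with the subharmonic function $\log|f|$; the same holds for each $|f_r|^\alpha$, which is moreover continuous on the compact polydisc $\overline{\mathbb{D}}^d$.

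The one-variable ingredient I would use is that if $u$ is subharmonic on $\mathbb{D}$ and continuous on $\overline{\mathbb{D}}$, then $u(\zeta)\le\int_{\mathbb{T}}P_\zeta(\omega)\,u(\omega)\,dm_1(\omega)$ for every $\zeta\in\mathbb{D}$. Indeed, the Poisson integral $h$ of the boundary values $u|_{\mathbb{T}}$ is harmonic on $\mathbb{D}$, continuous on $\overline{\mathbb{D}}$, and agrees with $u$ on $\mathbb{T}$, so $u-h$ is subharmonic and nonpositive on the boundary, whence $u\le h$ by the maximum principle. Applying this successively in each variable to $|f_r|^\alpha$ — first in $z_1$ with $(z_2,\dots,z_d)$ frozen, then in $z_2$ with $w_1\in\mathbb{T}$ and $(z_3,\dots,z_d)$ frozen, and so on — and combining the resulting one-dimensional Poisson kernels by Fubini's theorem, which is legitimate because each integrand is nonnegative and continuous on the compact torus, I obtain
\[ |f_r(z)|^\alpha \le \int_{\mathbb{T}^d}\prod_{j=1}^d P_{z_j}(w_j)\,|f_r(w)|^\alpha\,dm_d(w) = \int_{\mathbb{T}^d}P_z(w)\,|f(rw)|^\alpha\,dm_d(w). \]

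It then remains to pass to the limit $r\to1^-$. On the left-hand side, $f_r(z)=f(rz)\to f(z)$ by continuity of $f$ at the interior point $z$. On the right-hand side I would invoke the standard fact that the dilations of an $H^p(\mathbb{D}^d)$ function converge to its boundary function, $f(r\,\cdot)\to f$ in $L^p(\mathbb{T}^d)$; since $0<\alpha\le p$, the power map $g\mapsto|g|^\alpha$ is continuous from $L^p(\mathbb{T}^d)$ into $L^{p/\alpha}(\mathbb{T}^d)\hookrightarrow L^1(\mathbb{T}^d)$, so that $|f(r\,\cdot)|^\alpha\to|f|^\alpha$ in $L^1(\mathbb{T}^d)$. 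As $P_z$ is bounded for the fixed interior point $z$, the right-hand integrals converge to $\int_{\mathbb{T}^d}P_z(w)\,|f(w)|^\alpha\,dm_d(w)$, yielding the claim.

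I expect the genuine content to lie in the iterated Poisson majorization together with the dilation/limiting device that makes it rigorous; the one-variable maximum-principle inequality and the $L^p$-convergence of dilations are standard, and the only point requiring real care is the interchange of the limit and the integral, which is precisely what is secured by the continuity of $g\mapsto|g|^\alpha$ from $L^p$ into $L^1$ on the finite-measure torus.
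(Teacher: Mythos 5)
Your argument is correct and is exactly the route the paper takes: the paper offers no formal proof, only the one-line remark that $|f|^\alpha$ is separately subharmonic in each variable, and your write-up simply fills in the standard details (iterated one-variable Poisson majorization for the dilates $f_r$, then $r\to1^-$ using the $L^p$-convergence of dilations, with $\alpha\le p$ securing the $L^1$-convergence of $|f_r|^\alpha$). Nothing further is needed.
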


Lemma~\ref{lempoint} shows that if the Carleson embedding condition \eqref{eq:tempo} holds for all harmonic functions $f$, for one $p$, then \eqref{eq:tempo} holds for all $f$ in $\mathcal{H}^p$, for every $p$. Hence, to prove Theorem~\ref{thmlin}, we only need to verify that linear functions $g$ in $\mathcal{H}^2$ induce Carleson measures on the harmonic functions for $p=2$. Obviously this raises the question whether the corresponding statement is true for other symbols $g$ from Sections~\ref{sec:multip} and \ref{sec:homogen}, or even if it could be true that the Carleson condition for analytic functions implies the same condition for harmonic functions, cf. Question~1 in the introduction. We only have the answer in the simplest case of linear symbols. 
	
To simplify the computations to be given below, we will use the multiplicative notation that comes from identifying the dual of the compact abelian group $\mathbb{T}^\infty$ with the discrete abelian group $\mathbb{Q}_+$ (see \cite{HLS97,QQ13}). This means that the Fourier series of $f$ on $\mathbb{T}^\infty$ takes the form
\[ \sum_{r \in \mathbb{Q}_+} c(r)\chi(r),\]
where $c(r) = \langle f(\chi),\chi(r)\rangle_{L^2(\mathbb{T}^\infty)}$. (The notation $\chi(r)$ is explained at the end of the introduction.)

\begin{proof}[Proof of Theorem~\ref{thmlin}] To see that the supremum cannot be smaller than $1/4$, it suffices to set $g(s)={p_j}^{-s}$ and $f(s)=1$. 

To prove the bound from above, we begin by expanding the function $h_p(\chi):=|f_\chi |^{p/2}$ in a Fourier series on $\mathbb{T}^{\infty}$,
\[ h_p(\chi)=\sum_{r\in \mathbb{Q}^+} c(r) \chi(r). \]
Using Lemma~\ref{lempoint} with $z_j=p_j^{-\sigma}\chi(p_j)$ and $\alpha=p/2$, we get that 
\[ |f_{\chi}(\sigma)|^{p/2} \le \int_{\mathbb{T}^d} h_p(w) P_{z}(w) dm_d(w)  = \sum_{(m,n)=1} c\left(\frac{m}{n}\right) (mn)^{-\sigma} \chi\left(\frac{m}{n}\right),\]
where we in the last step integrated the Fourier series of $h_p$ term by term against the Poisson kernel.
It follows that
\[I_\sigma :=\int_{\mathbb{T}^{\infty}}  | f_{\chi}(\sigma)|^p |g'_{\chi} (\sigma)|^2  dm_{\infty}(\chi) \le  \sum_{j,k=1}^d \sum_{\frac{m\mu}{n\nu}=\frac{p_j}{p_k}} \left|c\left(\frac{m}{n}\right) c\left(\frac{\mu}{\nu}\right)\right| (mn \mu \nu p_jp_k)^{-\sigma} |b_j b_k |\log{p_j} \log{p_k},\]
where it is understood that $(m,n)=1$ and $(\mu,\nu)=1$. By symmetry, we get $I_{\sigma} \le 2I_{\sigma,1} + 2I_{\sigma,2}$,
where 
\begin{align*} 
I_{\sigma,1} & :=\sum_{j,k=1}^d \sum_{\frac{m\mu}{n\nu}=\frac{p_j}{p_k}, \atop p_j|m,\, p_k|n } \left|c\left(\frac{m}{n}\right) c\left(\frac{\mu}{\nu}\right)\right| (mn \mu \nu p_jp_k)^{-\sigma} |b_j b_k |\log{p_j} \log{p_k}\\
I_{\sigma,2} & :=\sum_{j,k=1}^d \sum_{\frac{m\mu}{n\nu}=\frac{p_j}{p_k}, \atop p_j|m,\, p_k|\nu } \left|c\left(\frac{m}{n}\right) c\left(\frac{\mu}{\nu}\right)\right| (mn \mu \nu p_jp_k)^{-\sigma} |b_j b_k |\log{p_j} \log{p_k}.
\end{align*}
We estimate the contribution from these two sums separately. First, by the Cauchy--Schwarz inequality, we have 
\begin{align*}
	I_{\sigma,1} &\leq \Bigg(\sum_{j,k=1}^d \sum_{(m,n)=1, \atop p_j|m,\, p_k|n} \left|c\left(\frac{m}{n}\right)\right|^2 \frac{\log p_j \log p_k}{(mn)^{2\sigma}}\Bigg)^\frac{1}{2}\Bigg(\sum_{j,k=1}^d \sum_{(\mu,\nu)=1}\left|c\left(\frac{\mu}{\nu}\right)\right|^2 |b_j|^2 |b_k |^2  \frac{\log p_j \log p_k}{(p_jp_k)^{2\sigma}}\Bigg)^\frac{1}{2} \\
	&\leq\Bigg(\sum_{(m,n)=1} \left|c\left(\frac{m}{n}\right)\right|^2 \frac{\log m \log n}{(mn)^{2\sigma}}\Bigg)^\frac{1}{2}\Bigg(\sum_{(\mu,\nu)=1}\left|c\left(\frac{\mu}{\nu}\right)\right|^2 \sum_{j,k=1}^d |b_j|^2 |b_k |^2  \frac{\log p_j \log p_k}{(p_jp_k)^{2\sigma}}\Bigg)^\frac{1}{2},
\end{align*}
where we in the final inequality changed the order of summation in the first factor and used that $\sum_{p_j|m}\log{p_j} \leq \log{m}$. To compute the integrals, we will use the identity 
\[\int_{0}^{\infty} (\log a)^2 a^{-2\sigma} \sigma d\sigma=\frac{1}{4}, \] 
which is valid for every $a>0$. We use the Cauchy--Schwarz inequality again and take the two integrals into the respective sums, to deduce that
\[\int_0^\infty I_{\sigma,1}\,\sigma d\sigma \leq \left(\sum_{(m,n)=1}\left|c\left(\frac{m}{n}\right)\right|^2\,\frac{\log{m}\log{n}}{4(\log{mn})^2}\right)^\frac{1}{2}\left( \sum_{(\mu,\nu)=1}\left|c\left(\frac{\mu}{\nu}\right)\right|^2 \sum_{j,k=1}^d|b_j|^2 |b_k |^2\,\frac{\log{p_j}\log{p_k}}{4(\log{p_jp_k})^2}\right)^\frac{1}{2}.\]
The fractions with logarithms are bounded by $1/16$, so in total we get that
\[ \int_{0}^{\infty} I_{\sigma,1} \,\sigma d\sigma \le \frac{1}{16} \|g\|_{\mathcal{H}^2}^2 \|f\|_{\mathcal{H}^p}^p.\]
To estimate $I_{\sigma,2}$, we use the Cauchy--Schwarz inequality and change the order of summation:
\begin{align*}
	I_{\sigma,2} &\le \Bigg(\sum_{j,k=1}^d \sum_{(m,n)=1, \atop p_j|m} \left|c\left(\frac{m}{n}\right)\right|^2   
	|b_k|^2 \frac{(\log p_j)^2}{(mp_j)^{2\sigma}}\Bigg)^\frac{1}{2}\Bigg(\sum_{j,k=1}^d \sum_{(\mu,\nu)=1, \atop p_k|\nu}\left|c\left(\frac{\mu}{\nu}\right)\right|^2 |b_j|^2 \frac{(\log p_k)^2}{(\nu p_k)^{2\sigma}}\Bigg)^\frac{1}{2} \\
	&=\|g\|_{\mathcal{H}^2}^2 \Bigg(\sum_{(m,n)=1} \left|c\left(\frac{m}{n}\right)\right|^2 \sum_{p_j|m} \frac{(\log p_j)^2}{(mp_j)^{2\sigma}}\Bigg)^\frac{1}{2}\Bigg(\sum_{(\mu,\nu)=1} \left|c\left(\frac{\mu}{\nu}\right)\right|^2 \sum_{p_k|\nu } \frac{(\log p_k)^2}{(\nu p_k)^{2\sigma}}\Bigg)^\frac{1}{2}.
\end{align*}
The two factors are symmetrical, so by using the Cauchy--Schwarz inequality again we get
\begin{align*}
	\int_0^\infty I_{\sigma,2}\,\sigma d\sigma &\leq \|g\|_{\mathcal{H}^2}^2 \sum_{(m,n)=1} \left|c\left(\frac{m}{n}\right)\right|^2 \sum_{p_j|m} \frac{(\log{p_j})^2}{4(\log{mp_j})^2} \\
	&= \frac{\|g\|_{\mathcal{H}^2}^2}{4} \sum_{(m,n)=1} \left|c\left(\frac{m}{n}\right)\right|^2 \frac{1}{\log{m}}\sum_{p_j|m} \log{p_j} \left(2 + \frac{\log{m}}{\log{p_j}}+\frac{\log{p_j}}{\log{m}}\right)^{-1} \leq \frac{\|g\|_{\mathcal{H}^2}^2\|f\|_{\mathcal{H}^p}^p}{16},
\end{align*}
where we used that $\log{m}/\log{p_j}+\log{p_j}/\log{m}\geq2$ when $p_j|m$. Combining everything yields
\[\int_0^\infty I_\sigma \,\sigma d\sigma \leq 2\int_0^\infty I_{\sigma,1}\,\sigma d\sigma + 2\int_0^\infty I_{\sigma,2}\,\sigma d\sigma \leq \frac{1}{4} \|g\|_{\mathcal{H}^2}^2 \|f\|_{\mathcal{H}^p}^p. \qedhere\]
\end{proof}

\section{Comparison of $\mathcal{X}$ with other spaces of Dirichlet series of $\BMO$ type} \label{sec:hankel} 

\subsection{Hardy spaces $\mathcal{H}^p $ and $\BMOA(\mathbb{C}_0)$} Our initial motivation for studying $\mathbf{T}_g$ was to consider $\mathcal{X}=\mathcal{X}_2$ as a type of $\BMOA$-space for the range of Hardy spaces $\mathcal{H}^p$. From Theorem~\ref{thm:bmocond}, we have the following inclusions, which show that $\mathcal{X}$ is in every $\mathcal{H}^p$, for $0<p<\infty$.

\begin{cor} \label{cor:inclusions}
	We have the following inclusions,
	\[\mathcal{H}^\infty \subsetneq \BMOA(\mathbb{C}_0)\cap\mathcal{D} \subsetneq \mathcal{X} \subsetneq \bigcap_{0< p < \infty} \mathcal{H}^p.\]
\end{cor}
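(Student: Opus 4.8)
The plan is to read off the three set inclusions from results already established and then to produce a separating example witnessing that each inclusion is proper. For the inclusions themselves: $\mathcal{H}^\infty \subset \BMOA(\mathbb{C}_0)\cap\mathcal{D}$ holds because a bounded holomorphic function on $\mathbb{C}_0$ trivially satisfies the $\BMO$ condition and every element of $\mathcal{H}^\infty$ is a Dirichlet series; $\BMOA(\mathbb{C}_0)\cap\mathcal{D}\subset\mathcal{X}$ is exactly part (a) of Theorem~\ref{thm:bmocond}; and $\mathcal{X}\subset\bigcap_{0<p<\infty}\mathcal{H}^p$ is the first half of condition (iii) in Lemma~\ref{lem:chibmo}, which part (b) of Theorem~\ref{thm:bmocond} furnishes for every $g$ in $\mathcal{X}$.

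For the strictness of the first inclusion I would invoke the zeta primitive $g(s)=\sum_{n\geq2}(n\log n)^{-1}n^{-s}$ of Theorem~\ref{thm:zetabmo}, which lies in $\BMOA(\mathbb{C}_0)$. Since $g'(s)=-(\zeta(s+1)-1)$ has a simple pole at $s=0$, the function $g$ behaves like $-\log s$ near the origin and is therefore unbounded on $\mathbb{C}_0$, so $g\notin\mathcal{H}^\infty$. For the second inclusion I would take the linear symbol $g(s)=\sum_p (\sqrt{p}\,\log p)^{-1}p^{-s}$. Its coefficients are square-summable, so $g\in\mathcal{H}^2$ and hence $g\in\mathcal{X}$ by Theorem~\ref{thm:linear}; but the coefficients being positive, Landau's theorem gives that the abscissa of convergence equals $1/2$. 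Were $g$ in $\BMOA(\mathbb{C}_0)\cap\mathcal{D}$, then part (i) of Lemma~\ref{lem:chibmo} would force $\sigma_b\leq0$ and hence, by Bohr's theorem, convergence throughout $\mathbb{C}_\delta$ for every $\delta>0$, contradicting the abscissa of convergence being $1/2$.

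The genuine work lies in the strictness of the outer inclusion $\mathcal{X}\subsetneq\bigcap_{0<p<\infty}\mathcal{H}^p$. Here I would use the primitive $g(s)=\sum_{n\geq2}(n^\alpha\log n)^{-1}n^{-s}$ of $\zeta(s+\alpha)-1$ with $1/2<\alpha<1$. By Theorem~\ref{thm:zeta} the operator $\mathbf{T}_g$ is unbounded, so $g\notin\mathcal{X}$, and it remains to check that $g\in\mathcal{H}^p$ for every $p<\infty$. It suffices to bound $\|g\|_{\mathcal{H}^{2k}}^{2k}=\|g^k\|_{\mathcal{H}^2}^2$ for each positive integer $k$. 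Expanding the power, the coefficient of $m^{-s}$ in $g^k$ equals
\[ m^{-\alpha}\sum_{\substack{n_1\cdots n_k=m\\ n_i\geq2}}\prod_{i=1}^k\frac{1}{\log n_i}\leq (\log2)^{-k}\,m^{-\alpha}\,d_k(m), \]
where $d_k$ is the $k$-fold divisor function, since each $\log n_i\geq\log2$ and the number of ordered factorizations of $m$ into $k$ factors is at most $d_k(m)$. Therefore
\[ \|g^k\|_{\mathcal{H}^2}^2\leq (\log2)^{-2k}\sum_{m\geq1} d_k(m)^2\,m^{-2\alpha}, \]
which converges because $2\alpha>1$ and the Dirichlet series $\sum_m d_k(m)^2 m^{-s}$ has abscissa of convergence $1$. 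Since $\mathcal{H}^{2k}\subset\mathcal{H}^p$ for $p\leq 2k$ and $k$ is arbitrary, this places $g$ in $\bigcap_{0<p<\infty}\mathcal{H}^p$. I expect this $\mathcal{H}^p$-membership estimate to be the main obstacle, as it is the only step not reducible to a direct citation; the divisor-function bound is the key device for controlling all even integer powers simultaneously.
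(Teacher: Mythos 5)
Your proposal is correct, and the first two thirds of it coincide with the paper's own argument: the three inclusions are read off from Theorem~\ref{thm:bmocond} (and Lemma~\ref{lem:chibmo}(iii)) exactly as you do; the strictness of the first inclusion is obtained from Theorem~\ref{thm:zetabmo} in both cases; and your linear separating example for the second inclusion is essentially what the paper points to when it says the strictness can be ``deduced from any example in Section~\ref{sec:homogen}'' (the paper's primary reference is instead the remark closing Section~\ref{sec:multip}, where the multiplicative symbols $g_\lambda$ are shown not even to lie in the Smirnov class of $\mathbb{C}_0$). Where you genuinely diverge is the outer inclusion. The paper argues via the sharpness examples of Theorem~\ref{thm:homweights}: those are $m$-homogeneous symbols with unbounded $\mathbf{T}_g$, and for $m$-homogeneous Dirichlet series one has $\|g\|_{\mathcal{H}^p}\asymp\|g\|_{\mathcal{H}^2}$ by the Helson-type inequalities \eqref{eq:uplow}, since $m+1\le d(n)\le 2^m$ when $\Omega(n)=m$; this places the examples in every $\mathcal{H}^p$. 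You instead take the single explicit symbol $g(s)=\sum_{n\ge2}(n^\alpha\log n)^{-1}n^{-s}$ with $1/2<\alpha<1$, excluded from $\mathcal{X}$ by Theorem~\ref{thm:zeta}, and verify $g\in\mathcal{H}^{2k}$ for all $k$ by bounding the coefficients of $g^k$ with the $k$-fold divisor function and using $\sum_m d_k(m)^2m^{-2\alpha}<\infty$ for $2\alpha>1$; this computation is sound (the identity $\|g\|_{\mathcal{H}^{2k}}^{2k}=\|g^k\|_{\mathcal{H}^2}^2$ is the same device the paper uses in \eqref{eq:emb2k}). Your route buys a concrete, non-homogeneous witness with a self-contained estimate and avoids assembling a single symbol out of the family of sharpness examples; the paper's route reuses machinery already in place and needs no new computation. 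Both are valid.
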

\begin{proof}
	The inclusions are all from Theorem~\ref{thm:bmocond}. That the first inclusion is strict follows from Theorem~\ref{thm:zetabmo}. The second inclusion was observed to be strict in the remark at the end of Section~\ref{sec:multip}, but it can also be deduced from any example in Section~\ref{sec:homogen}. The strictness of the last inclusion follows from Theorem~\ref{thm:homweights} and the fact that
	\begin{equation} \label{eq:pequiv}
	\|g\|_{\mathcal{H}^p} \asymp \|g\|_{\mathcal{H}^2}
	\end{equation}
	when $g$ is an $m$-homogeneous Dirichlet series, with
	implied constants depending on $m$ and $p$. To verify \eqref{eq:pequiv}, we argue as follows. Let $d(n)$ be the number of divisors of the positive integer $n$. By the extension of Helson's inequality discussed in \cite[Sec. 5]{BHS15} and \cite[Thm.~3]{Seip13}, there exist nonnegative number $\alpha$ and $\beta$, depending on $p$, such that 
	\begin{equation}\label{eq:uplow} \Bigg(\sum_{n=1}^\infty \frac{|a_n|^2}{[d(n)]^\alpha}\Bigg)^\frac{1}{2} \leq \Bigg\|\sum_{n=1}^\infty a_n n^{-s}\Bigg\|_{\mathcal{H}^p} \leq \Bigg(\sum_{n=1}^\infty |a_n|^2 [d(n)]^\beta\Bigg)^\frac{1}{2}.\end{equation}
	The key point is that if $\Omega(n)=m$, then $m+1\leq d(n) \leq 2^m$, proving \eqref{eq:pequiv}. (In fact, by a suitable application of H\"{o}lder's inequality, we can prove \eqref{eq:pequiv} using only the right inequality in \eqref{eq:uplow}.)
\end{proof}

In the next three subsections, we will compare $\mathcal{X}$ with two other analogues of $\BMOA$, namely the dual space $(\mathcal{H}^1)^\ast$ and the space $(\mathcal{H}^2\odot\mathcal{H}^2)^\ast$ of symbols generating bounded multiplicative Hankel forms. Let us first recall that neither of these spaces is contained in
\[\bigcap_{0< p < \infty} \mathcal{H}^p.\]
This follows immediately from a result of Marzo and Seip \cite{MS11}, which states that the Riesz projection $P$ on the polytorus is unbounded from $L^\infty(\mathbb{T}^\infty)$ to $H^4(\mathbb{D}^\infty)$. In fact, it is not even known whether $P(L^\infty(\mathbb{T}^\infty))$ is contained in $H^p(\mathbb{D}^\infty)$ for any $p>2$. Note that $P(L^\infty(\mathbb{T}^\infty))$ is naturally identified with $(\mathcal{H}^1)^\ast$, and that it is strictly continuously contained in $(\mathcal{H}^2\odot\mathcal{H}^2)^\ast$ \cite{OCS12}.

\subsection{Hankel forms}
Let us now consider the space of symbols $g$ such that the corresponding Hankel forms $\mathbf{H}_g$ are bounded. The form $\mathbf{H}_g$ is given by 
\begin{equation*}
	\mathbf{H}_g(fh) := \langle fh, g\rangle_{\mathcal{H}^2},
\end{equation*}
from which it is clear, by definition, that $\mathbf{H}_g$ is bounded if and only if $g$ is in $(\mathcal{H}^2\odot\mathcal{H}^2)^\ast$. Applying the product rule for derivatives, we find that 
\begin{equation}
	\label{eq:halfhankel} \mathbf{H}_g(fh) = f(+\infty)h(+\infty)\overline{g(+\infty)} + \langle 
	\partial^{-1}(f'h), g\rangle_{\mathcal{H}^2} + \langle 
	\partial^{-1}(fh'), g\rangle_{\mathcal{H}^2}, 
\end{equation}
where
\[\partial^{-1}f(s) := -\int_{s}^\infty f(w) \, dw.\]
The ``half-Hankel'' form
\begin{equation} \label{eq:halfhankelform}
	(f,h) \mapsto \langle
	\partial^{-1}(f'h), g\rangle_{\mathcal{H}^2}
\end{equation}
is bounded if and only if $g \in (
\partial^{-1}(
\partial\mathcal{H}^2\odot\mathcal{H}^2))^\ast$. It is clear from \eqref{eq:halfhankel} that 
\begin{equation} \label{eq:skewinc}
	(\partial^{-1}(
	\partial\mathcal{H}^2\odot\mathcal{H}^2))^\ast \subset (\mathcal{H}^2\odot\mathcal{H}^2)^\ast.
\end{equation}
Whether the inclusion in \eqref{eq:skewinc} is strict, is an open problem. It was observed in \cite{BP15} that it is equivalent to an interesting Schur multiplier problem.

\begin{cor} \label{cor:hankelinc}
	Suppose that the Volterra operator $\mathbf{T}_g$ acts boundedly on $\mathcal{H}^2$. Then the Hankel form $\mathbf{H}_g$ is bounded.
\end{cor}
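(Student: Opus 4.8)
The plan is to deduce the corollary from the $p=2$ Carleson measure description of $\mathcal{X}_2$ together with the inclusion \eqref{eq:skewinc}. Concretely, it suffices to prove that the half-Hankel form \eqref{eq:halfhankelform}, namely $(f,h)\mapsto \langle \partial^{-1}(f'h),g\rangle_{\mathcal{H}^2}$, is bounded on $\mathcal{H}^2\times\mathcal{H}^2$; for then $g$ lies in $(\partial^{-1}(\partial\mathcal{H}^2\odot\mathcal{H}^2))^\ast$, and \eqref{eq:skewinc} places $g$ in $(\mathcal{H}^2\odot\mathcal{H}^2)^\ast$, which is exactly the assertion that $\mathbf{H}_g$ is bounded. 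Since $\mathbf{T}_g$ is bounded, $g=\mathbf{T}_g 1$ is in $\mathcal{H}^2$ by part~(b) of Theorem~\ref{thm:bmocond}, so the pairing makes sense; and it is enough to establish the bound $|\langle \partial^{-1}(f'h),g\rangle_{\mathcal{H}^2}|\ll \|\mathbf{T}_g\|\,\|f\|_{\mathcal{H}^2}\|h\|_{\mathcal{H}^2}$ for Dirichlet polynomials $f,h$, extending to all of $\mathcal{H}^2$ by density.

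First I would polarize the $p=2$ instance of the Littlewood--Paley formula \eqref{eq:LPp}, which holds there with equality, to obtain the sesquilinear identity
\[ \langle F,G\rangle_{\mathcal{H}^2} = F(+\infty)\overline{G(+\infty)} + \int_{\mathbb{T}^\infty}\int_{\mathbb{R}}\int_0^\infty F_\chi'(\sigma+it)\,\overline{G_\chi'(\sigma+it)}\,\sigma\,d\sigma\,\frac{dt}{1+t^2}\,dm_\infty(\chi) \]
for $F,G$ in $\mathcal{H}^2$. Applying it with $F=\partial^{-1}(f'h)$ and $G=g$, and using that $(\partial^{-1}(f'h))'=f'h$ vanishes at $+\infty$ together with the multiplicativity $(f'h)_\chi=f_\chi' h_\chi$, the boundary term drops out and I obtain
\[ \langle \partial^{-1}(f'h),g\rangle_{\mathcal{H}^2} = \int_{\mathbb{T}^\infty}\int_{\mathbb{R}}\int_0^\infty f_\chi'(\sigma+it)\,h_\chi(\sigma+it)\,\overline{g_\chi'(\sigma+it)}\,\sigma\,d\sigma\,\frac{dt}{1+t^2}\,dm_\infty(\chi). \]

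It then remains to estimate this triple integral, which I would do by the Cauchy--Schwarz inequality with respect to the positive measure $\sigma\,d\sigma\,(1+t^2)^{-1}dt\,dm_\infty(\chi)$, grouping the factor $|f_\chi'|$ against $|h_\chi|\,|g_\chi'|$. The first resulting factor is $\big(\int_{\mathbb{T}^\infty}\int_{\mathbb{R}}\int_0^\infty |f_\chi'|^2\,\sigma\,d\sigma\,(1+t^2)^{-1}dt\,dm_\infty\big)^{1/2}\le \|f\|_{\mathcal{H}^2}$ by \eqref{eq:LPp}, while the second factor is precisely the square root of the left-hand side of the $p=2$ Carleson condition \eqref{eq:avcarlesonp} applied to $h$, hence at most $C(g,2)\,\|h\|_{\mathcal{H}^2}$ with $C(g,2)\asymp\|\mathbf{T}_g\|_{\mathcal{L}(\mathcal{H}^2)}$. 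Multiplying the two bounds yields the desired estimate, and the corollary follows. The substantive input is thus the $p=2$ Carleson measure characterization of $\mathcal{X}_2$ recorded in \eqref{eq:CD2}; the remaining steps are Cauchy--Schwarz and bookkeeping. I expect the only delicate point to be the justification of the polarized Littlewood--Paley identity and of the reduction to polynomials---one must check that the cross-term integral converges (it does, by Cauchy--Schwarz, once $g\in\mathcal{H}^2$) and that the bound on polynomials transfers to the full form.
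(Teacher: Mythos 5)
Your proposal is correct and follows essentially the same route as the paper: polarize the $p=2$ Littlewood--Paley formula to represent $\langle \partial^{-1}(f'h),g\rangle_{\mathcal{H}^2}$ as a triple integral, bound it via Cauchy--Schwarz using the $p=2$ Carleson condition from Theorem~\ref{thm:avcarl}, and conclude through the inclusion \eqref{eq:skewinc}. The only difference is that you write out explicitly the Cauchy--Schwarz step that the paper compresses into ``it is clear from Theorem~\ref{thm:avcarl}.''
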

\begin{proof}
	The Littlewood--Paley formula \eqref{eq:LPp} may be polarized, to obtain
	\begin{equation}
		\langle f, g \rangle_{\mathcal{H}^2} = f(+\infty)\overline{g(+\infty)} + \frac{4}{\pi} \int_{\mathbb{T}^\infty} \int_\mathbb{R} \int_0^\infty f_\chi'(\sigma+it) \overline{g_\chi'(\sigma+it)} \sigma\,d\sigma\,\frac{dt}{1+t^2}\,dm_\infty(\chi). \label{eq:LPpolar} 
	\end{equation}
	We find that
	\begin{equation*}
		\langle
		\partial^{-1}(f'h), g\rangle_{\mathcal{H}^2} = \frac{4}{\pi} \int_{\mathbb{T}^\infty} \int_\mathbb{R} \int_0^\infty f_\chi'(\sigma+it) h_\chi(\sigma + it) \overline{g_\chi'(\sigma+it)} \sigma\,d\sigma\,\frac{dt}{1+t^2}\,dm_\infty(\chi). 
	\end{equation*}
	Hence, it is clear from Theorem~\ref{thm:avcarl} that if $\mathbf{T}_g$ is bounded, then so is the form \eqref{eq:halfhankelform}. Thus we may complete the proof by using the inclusion \eqref{eq:skewinc}.
\end{proof}
On weighted Dirichlet spaces of the disc (including the Hardy space), even in the vector-valued setting, the boundedness of a half-Hankel form also implies the boundedness of the corresponding $T_g$ operator (see \cite{AP12}). However, by \cite[Lem.~10]{BP15}, a half-Hankel form on $\mathcal{H}^2$ generated by a symbol $g$ with positive coefficients is bounded if and only if $\mathbf{H}_g$ is bounded. Since the symbols of Theorem~\ref{thm:zeta} generate bounded Hankel forms for $\alpha \geq 1/2$, but not bounded $\mathbf{T}_g$ operators for $\alpha < 1$, this shows that the same relationship between the half-Hankel form and $\mathbf{T}_g$ does not hold in the present context. 

\subsection{The dual of $\mathcal{H}^1$} 
The most tractable sufficient condition for  $g(s) = \sum_{n\geq1} b_n n^{-s}$ to belong to $(\mathcal{H}^1)^\ast$ was put forward by Helson \cite{Helson06}: $g$ is in $(\mathcal{H}^1)^\ast$ if 
\begin{equation}
	\label{eq:helson} \sum_{n=1}^\infty |b_n|^2 d(n) < \infty, 
\end{equation}
where again $d(n)$ denotes the number of divisors of the integer $n$. In fact, Helson's result is stated in terms of the Hankel form $\mathbf{H}_g$ considered above. If $g$ satisfies \eqref{eq:helson}, then $\mathbf{H}_g$ is Hilbert--Schmidt. 
Note that, by a consideration of zero sets based on \cite[Thm. 2]{Seip13}, we can show that a  Dirichlet series $g$ satisfying \eqref{eq:helson} will not always be in $\BMOA(\mathbb{C}_{1/2})$.

The examples of $g$ in $\mathcal{X}_2$ considered in Sections~\ref{sec:multip} and \ref{sec:homogen} are easily seen to satisfy \eqref{eq:helson}. Moreover, we see that the symbols in Theorem~\ref{thm:zeta}, $1/2<\alpha<1$, are in $(\mathcal{H}^1)^\ast$, but not in $\mathcal{X}_2$. Hence $(\mathcal{H}^1)^\ast$ is not contained in $\mathcal{X}_2$, and it is tempting to conjecture that $\mathcal{X}_2 \subset (\mathcal{H}^1)^\ast$.

First, let us show how to construct a class of Dirichlet series in $(\mathcal{H}^1)^\ast \cap \mathcal{X}_2$ that do not satisfy \eqref{eq:helson}, showing that Helson's criterion is not well adapted to understanding Volterra operators.
\begin{thm}
	\label{thm:gcdpaley} Suppose that $\mathcal{N}=\{n_1,\,n_2,\,\ldots\,\}\subset\mathbb{N}\backslash\{1\}$ is a set with the property that $(n_j,n_k)=1$ if $j\neq k$. If 
	\begin{equation}
		\label{eq:gpaley} g(s) = \sum_{n \in \mathcal{N}} b_n n^{-s}, 
	\end{equation}
	then $\|\mathbf{T}_g\|_{\mathcal{L}(\mathcal{H}^2)}=\|g\|_{\mathcal{H}^2}$. Moreover, for $f(s)=\sum_{n\geq1} a_n n^{-s}$, we have
	\[\Bigg(|a_0|^2 + \sum_{n\in \mathcal{N}} |a_n|^2\Bigg)^\frac{1}{2} \leq \sqrt{2}\|f\|_{\mathcal{H}^1}.\]
\end{thm}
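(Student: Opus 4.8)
The plan is to treat the two assertions separately. For the identity $\|\mathbf{T}_g\|_{\mathcal{L}(\mathcal{H}^2)} = \|g\|_{\mathcal{H}^2}$, I would repeat the argument proving Theorem~\ref{thm:linear} verbatim: applying $\mathbf{T}_g$ to $f(s)=\sum_m a_m m^{-s}$, the only structural input needed is the estimate $\sum_{n\in\mathcal N,\,n\mid m}\log n\le\log m$. This now follows from pairwise coprimality rather than from $n$ being prime: if distinct $n,n'\in\mathcal N$ both divide $m$, then $(n,n')=1$ forces $nn'\mid m$, so the product of all elements of $\mathcal N$ dividing $m$ itself divides $m$, and taking logarithms gives the bound. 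The Cauchy--Schwarz step and the lower bound $\mathbf{T}_g1=g$ are then identical.

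The substance is the Paley-type inequality. Interpreting $a_0$ as the first coefficient $a_1=f(+\infty)$ and noting that $1$ is coprime to every integer, I set $S:=\{1\}\cup\mathcal N$, a pairwise coprime set, and aim to prove $\sum_{n\in S}|a_n|^2\le 2\|f\|_{\mathcal{H}^1}^2$. The starting point is the factorization $\mathcal{H}^1=\mathcal{H}^2\cdot\mathcal{H}^2$: every $f\in\mathcal{H}^1$ can be written $f=gh$ with $g(s)=\sum_k g_k k^{-s}$ and $h(s)=\sum_l h_l l^{-s}$ in $\mathcal{H}^2$ and $\|g\|_{\mathcal{H}^2}\|h\|_{\mathcal{H}^2}=\|f\|_{\mathcal{H}^1}$. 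Then $a_n=\sum_{kl=n}g_kh_l$, and fixing $g$, I view $h\mapsto(a_n)_{n\in S}$ as a linear operator $T\colon\mathcal{H}^2\to\ell^2(S)$ with matrix $T_{n,l}=g_{n/l}\mathbf{1}_{\{l\mid n\}}$. Since $\sum_{n\in S}|a_n|^2=\|Th\|_{\ell^2(S)}^2$, it suffices to show $\|T\|^2\le 2\|g\|_{\mathcal{H}^2}^2$.

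Here coprimality enters decisively through $TT^\ast$. For $n,n'\in S$ one computes $(TT^\ast)_{n,n'}=\sum_{l\mid n,\ l\mid n'}g_{n/l}\overline{g_{n'/l}}=\sum_{l\mid(n,n')}g_{n/l}\overline{g_{n'/l}}$. When $n\neq n'$ are coprime the only common divisor is $l=1$, so every off-diagonal entry collapses to $g_n\overline{g_{n'}}$, while the diagonal is $\sum_{d\mid n}|g_d|^2$. Thus $TT^\ast=ww^\ast+\Lambda$, where $w=(g_n)_{n\in S}$ and $\Lambda$ is the diagonal operator with entries $\Lambda_{n,n}=\sum_{d\mid n,\ d<n}|g_d|^2$; both summands are positive semidefinite. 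Since $\lambda_{\max}(ww^\ast)=\|w\|^2=\sum_{n\in S}|g_n|^2\le\|g\|_{\mathcal{H}^2}^2$ and $\lambda_{\max}(\Lambda)=\max_{n}\Lambda_{n,n}\le\|g\|_{\mathcal{H}^2}^2$, subadditivity of $\lambda_{\max}$ on positive operators yields $\|T\|^2=\|TT^\ast\|\le 2\|g\|_{\mathcal{H}^2}^2$. Combining this with the factorization gives $\sum_{n\in S}|a_n|^2\le 2\|g\|_{\mathcal{H}^2}^2\|h\|_{\mathcal{H}^2}^2=2\|f\|_{\mathcal{H}^1}^2$, which is the claim.

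Two features do the real work: first, the strong factorization of $\mathcal{H}^1$ with equality of norms, which is the one external ingredient I would want to invoke or justify carefully (strong factorization being delicate in several variables); and second, the observation that pairwise coprimality turns $TT^\ast$ into a rank-one perturbation of a diagonal operator. I expect the main obstacle to be the bookkeeping around the index $1$: it is the one divisor common to all $n\in S$, so it reappears both in $w$ (when $1\in S$) and in every diagonal entry of $\Lambda$, and one must check that estimating the two positive pieces \emph{separately}—rather than trying to be economical—is exactly what produces the clean constant $2$, and hence $\sqrt2$. A secondary point to verify is that $T$ is genuinely bounded, which is cleanest to handle by first estimating the quadratic form $\langle TT^\ast x,x\rangle$ on finitely supported $x\in\ell^2(S)$ and then passing to the limit.
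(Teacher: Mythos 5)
Your first part is fine and is exactly the paper's argument: the only input needed to rerun the proof of Theorem~\ref{thm:linear} is $\sum_{n\in\mathcal N,\ n\mid N}\log n\le\log N$, which, as you say, follows because pairwise coprimality makes the product of all elements of $\mathcal N$ dividing $N$ itself a divisor of $N$.

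The second part, however, has a fatal gap at its very first step. You invoke strong factorization $\mathcal{H}^1=\mathcal{H}^2\cdot\mathcal{H}^2$ with $\|g\|_{\mathcal{H}^2}\|h\|_{\mathcal{H}^2}=\|f\|_{\mathcal{H}^1}$, and you flag it as ``delicate''; it is not delicate, it is false. Strong factorization of $H^1$ already fails on the bidisc $\mathbb{D}^2$ (only the weak factorization $H^1(\mathbb{D}^d)=H^2(\mathbb{D}^d)\odot H^2(\mathbb{D}^d)$ holds, and that is the deep Ferguson--Lacey/Lacey--Terwilleger theorem, with no norm equality). On the infinite polydisc the situation is worse: even weak factorization fails, since by \cite{OCS12} the space $(\mathcal{H}^2\odot\mathcal{H}^2)^\ast$ of bounded Hankel symbols is strictly larger than $(\mathcal{H}^1)^\ast$ --- a fact this very paper relies on elsewhere. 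So there is no decomposition $f=gh$ (nor $f=\sum_j g_jh_j$ with controlled norms) to feed into your operator $T$, and the argument cannot be repaired by passing to weak factorization either, because the needed factorization constant does not exist in infinitely many variables and would in any case destroy the clean constant $\sqrt2$. Your computation of $TT^\ast=ww^\ast+\Lambda$ and the bound $\|TT^\ast\|\le 2\|g\|_{\mathcal{H}^2}^2$ are correct as linear algebra, but they bound the wrong object: they control $\sum_{n\in S}|a_n|^2$ only for the coefficients of a \emph{product} $gh$, not of a general $f\in\mathcal{H}^1$.

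The paper's route avoids factorization entirely. It uses Helson's iterative procedure \cite{Helson06} to reduce the claim to the two--coefficient inequality $\bigl(|a_0|^2+\tfrac12|a_{\kappa}|^2\bigr)^{1/2}\le\|f\|_{H^1(\mathbb{D}^d)}$ for a single $n\in\mathcal N$ with multi-index $\kappa=\kappa(n)$ and $m=\Omega(n)$, and proves that by combining Carleman's one-variable inequality $\bigl(\sum_k|c_k|^2/(k+1)\bigr)^{1/2}\le\|F\|_{H^1(\mathbb{D})}$ with F.~Wiener's trick (averaging over $m$th roots of unity) and a homogeneous decomposition of $f$, keeping only the $0$-homogeneous and $m$-homogeneous pieces. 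That is where the constant $\sqrt2$ actually comes from (the factor $\tfrac12=1/(1+1)$ in Carleman's inequality), not from adding two positive operators. If you want to salvage an operator-theoretic proof, you would need to replace the factorization input by something of this subharmonicity/Carleman type.
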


The second statement in the theorem yields $\|g\|_{(\mathcal{H}^1)^\ast}\leq\sqrt{2}\|g\|_{\mathcal{H}^2}$, by the Cauchy--Schwarz inequality applied to $\langle f, g \rangle_{\mathcal{H}^2}$. Define the integers $n_1: = 2$, $n_2: = 3\cdot5$, $n_3 := 7\cdot11\cdot13$, and so on. The set $\mathcal{N}:=\{n_1,\,n_2,\,\ldots\,\}$ satisfies the assumptions of Theorem~\ref{thm:gcdpaley}, but $d(n_j)=2^j$, so \eqref{eq:helson} is not always satisfied.
\begin{proof}
	[Proof of Theorem~\ref{thm:gcdpaley}] For the first statement, we simply observe that
	\[\sum_{n|N \atop n \in \mathcal{N}} \log{n} \leq \log{N},\]
	which allows us to follow the proof of Theorem~\ref{thm:linear} to obtain that every Dirichlet series of the form \eqref{eq:gpaley} satisfies $\|\mathbf{T}_g\| = \|g\|_{\mathcal{H}^2}$.
	
	For the second statement, fix some $n=n_j$, and set $d:=\omega(n)$, $m:=\Omega(n)$ and $\kappa: = \kappa(n)$. By Helson's iterative procedure \cite{Helson06}, it is sufficient to demonstrate that for $f$  in $H^1(\mathbb{D}^d)$,
	\begin{equation} \label{eq:helsonproto}
		\left(|a_0|^2 + \frac{1}{2}|a_\kappa|^2\right)^\frac{1}{2}\leq \|f\|_{H^1(\mathbb{D}^d)}.
	\end{equation}
	We begin with Carleman's inequality (see \cite{Vukotic03}),
	\[\Bigg(\sum_{k=0}^\infty \frac{|c_k|^2}{k+1}\Bigg)^\frac{1}{2}\leq\Bigg\|\sum_{k=0}^\infty c_k w^k\Bigg\|_{{H}^1(\mathbb{D})}.\]
	Setting $F(w) = \sum_{k\geq0}c_k w^k$, we use F.~Wiener's trick (see \cite{Bohr14}) with an $m$th root of unity, say $\varphi$, so that
	\[F_m(w^m) := \frac{1}{m}\left(F(w)+F(w\varphi)+F(w\varphi^2)+\cdots+F(w\varphi^{m-1})\right) = \sum_{k=0}^\infty c_{mk}w^{mk}.\]
	Clearly $\|F_m\|_{H^1(\mathbb{D})}\leq \|F\|_{H^1(\mathbb{D})}$, so we find from Carleman's inequality that
	\begin{equation} \label{eq:wienercarleman}
		\Bigg(\sum_{k=0}^\infty \frac{|c_{mk}|^2}{k+1}\Bigg)^\frac{1}{2} \leq \Bigg\|\sum_{k=0}^\infty c_k w^k\Bigg\|_{H^1(\mathbb{D})}.
	\end{equation} 
	Returning to our function $f$ in $H^1(\mathbb{D}^d)$, we let $f_k$ denote the $k$-homogeneous part of $f$ and decompose $f$ accordingly:
	\[f(z) = \sum_{k=0}^\infty f_k(z).\]	
	Substituting $z_j \mapsto w z_j$ for $1\leq j \leq d$, we find, using  Fubini's theorem, \eqref{eq:wienercarleman}, and Minkowski's inequality, that
	\[\left(\sum_{k=0}^\infty \frac{1}{k+1}\,\|f_{km}\|_{H^1(\mathbb{D}^d)}^2\right)^\frac{1}{2} \leq \int_{\mathbb{D}^d}\Bigg(\sum_{k=0}^\infty \frac{|f_{km}(z)|^2}{k+1}\Bigg)^\frac{1}{2}\, dm_d(z) \leq \|f\|_{H^1(\mathbb{D}^d)}.\]
	We retain only the two first terms in the sum on the left-hand side. The proof of \eqref{eq:helsonproto} is completed by noting that $\|f_0\|_{H^1(\mathbb{D}^d)}=|a_0|$ and that $|a_\kappa| \leq \|f_m\|_{H^1(\mathbb{D}^d)}$, where the latter inequality holds because $|\kappa|=\Omega(n)=m$. 
\end{proof} 

As for the question of whether $\mathcal{X}_2 \subset (\mathcal{H}^1)^\ast$, our best result is the following corollary of the characterization given in Theorem~\ref{thm:avcarl}. For its interpretation, one should recall that \eqref{eq:cpgn} implies that $\mathcal{X}_1\subset\mathcal{X}_2$. Hence, the corollary also motivates further interest in the question of whether $\mathcal{X}_2 = \mathcal{X}_p$ for all $p$, $0<p<\infty$.
\begin{cor}
	Suppose that the Volterra operator $\mathbf{T}_g$ acts boundedly on $\mathcal{H}^1$. Then $g$ is in $(\mathcal{H}^1)^\ast$. 
\end{cor}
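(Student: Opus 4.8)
The plan is to show that $g$ acts as a bounded linear functional on $\mathcal{H}^1$ by testing against Dirichlet polynomials $f$, which are dense in $\mathcal{H}^1$, and bounding $|\langle f, g\rangle_{\mathcal{H}^2}|$ by a constant multiple of $\|f\|_{\mathcal{H}^1}$. The hypothesis that $\mathbf{T}_g$ is bounded on $\mathcal{H}^1$ enters only through Theorem~\ref{thm:avcarl} with $p=1$, which supplies the Carleson estimate \eqref{eq:avcarlesonp}; in particular $g \in \mathcal{H}^1$, so $g(+\infty)$ is a well-defined constant. Throughout I write $d\nu(\chi,\sigma,t) := \sigma\,d\sigma\,(1+t^2)^{-1}\,dt\,dm_\infty(\chi)$ for the measure on $\mathbb{T}^\infty \times \mathbb{C}_0$ appearing in the Littlewood--Paley and Carleson expressions.

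First I would apply the polarized Littlewood--Paley identity \eqref{eq:LPpolar} to write
\[\langle f, g\rangle_{\mathcal{H}^2} = f(+\infty)\overline{g(+\infty)} + \frac{4}{\pi}\int_{\mathbb{T}^\infty}\int_{\mathbb{R}}\int_0^\infty f_\chi'\,\overline{g_\chi'}\,d\nu.\]
The point term is harmless, since $|f(+\infty)| = |a_1| \leq \|f\|_{\mathcal{H}^1}$ while $g(+\infty)$ is a fixed constant, so the entire matter reduces to estimating the triple integral.

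The central step is to split the integrand and apply the Cauchy--Schwarz inequality with respect to $d\nu$, choosing the split so that the two resulting factors are precisely the $p=1$ Littlewood--Paley square function of $f$ and the $p=1$ Carleson functional of $g$:
\[\int_{\mathbb{T}^\infty}\int_{\mathbb{R}}\int_0^\infty |f_\chi'|\,|g_\chi'|\,d\nu \leq \left(\int_{\mathbb{T}^\infty}\int_{\mathbb{R}}\int_0^\infty \frac{|f_\chi'|^2}{|f_\chi|}\,d\nu\right)^{1/2}\left(\int_{\mathbb{T}^\infty}\int_{\mathbb{R}}\int_0^\infty |f_\chi|\,|g_\chi'|^2\,d\nu\right)^{1/2}.\]
By the Littlewood--Paley formula \eqref{eq:LPp} with $p=1$, the first factor is $\ll \|f\|_{\mathcal{H}^1}^{1/2}$, while by the Carleson estimate \eqref{eq:avcarlesonp} with $p=1$ the second factor is at most $C(g,1)\|f\|_{\mathcal{H}^1}^{1/2}$. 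Multiplying the two bounds gives $|\langle f, g\rangle_{\mathcal{H}^2}| \ll \|f\|_{\mathcal{H}^1}$, and hence $g \in (\mathcal{H}^1)^\ast$. This parallels Corollary~\ref{cor:hankelinc}, with the ``$p=1$'' pair of Littlewood--Paley and Carleson expressions playing the role that the $p=2$ formulas played there.

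The only point requiring care is the weight $|f_\chi|^{-1}$, which is singular at the zeros of $f_\chi$. These zeros lie in a set of measure zero in each slice, and \eqref{eq:LPp} for $p=1$ already asserts that $\int |f_\chi|^{-1}|f_\chi'|^2\,d\nu$ converges and is comparable to $\|f\|_{\mathcal{H}^1}$, so no genuine obstruction arises; should one wish to be fully rigorous, one may replace $|f_\chi|$ by $|f_\chi| + \varepsilon$ in the Cauchy--Schwarz step and let $\varepsilon \to 0$ by monotone convergence. I expect this regularization, rather than any conceptual difficulty, to be the only technical wrinkle: the substance of the proof is the exact matching, under Cauchy--Schwarz, of the $p=1$ square function against the $p=1$ Carleson condition.
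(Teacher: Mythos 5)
Your proof is correct and follows essentially the same route as the paper: polarize the Littlewood--Paley identity, apply Cauchy--Schwarz with the split $|f_\chi'|=\bigl(|f_\chi'|/|f_\chi|^{1/2}\bigr)\cdot|f_\chi|^{1/2}$, and match the two factors against the $p=1$ Littlewood--Paley formula \eqref{eq:LPp} and the $p=1$ Carleson condition \eqref{eq:avcarlesonp} from Theorem~\ref{thm:avcarl}. The only cosmetic difference is that the paper disposes of the constant term by assuming $f(+\infty)=0$ rather than estimating it directly, and your remark on regularizing the weight $|f_\chi|^{-1}$ is a fair point the paper leaves implicit.
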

\begin{proof}
	Let $f$ be a Dirichlet series in $\mathcal{H}^1$ and suppose that $f(+\infty)=0$. Let $g$ be  $\mathcal{X}_1$ and apply \eqref{eq:LPpolar} along with the Cauchy--Schwarz inequality, 
	\begin{align*}
		|\langle f, g \rangle_{\mathcal{H}^2}| &\asymp \left|\int_{\mathbb{T}^\infty} \int_\mathbb{R} \int_0^\infty f_\chi'(\sigma+it) \overline{g_\chi'(\sigma+it)} \sigma\,d\sigma\,\frac{dt}{1+t^2}\,dm_\infty(\chi)\right| \\
		&\leq \left(\int_{\mathbb{T}^\infty} \int_\mathbb{R} \int_0^\infty \frac{|f_\chi'(\sigma+it)|^2}{|f_\chi(\sigma+it)|} \sigma\,d\sigma\,\frac{dt}{1+t^2}\,dm_\infty(\chi)\right)^\frac{1}{2}\,\qquad\qquad\qquad \\
		&\qquad\qquad\qquad \times\,\left(\int_{\mathbb{T}^\infty} \int_\mathbb{R} \int_0^\infty |f_\chi(\sigma+it)|\,|g_\chi'(\sigma+it)|^2 \sigma\,d\sigma\,\frac{dt}{1+t^2}\,dm_\infty(\chi)\right)^\frac{1}{2}. 
	\end{align*}
	We finish the proof by using Theorem~\ref{thm:avcarl} with $p=1$, since the quantity on the second line is  bounded from above and below by $\|f\|_{\mathcal{H}^1}^{1/2}$ in view of the Littlewood--Paley formula \eqref{eq:LPp}. 
\end{proof}
Observe that by part (a) of Theorem~\ref{thm:bmoxp}, this shows in particular that if $g$ is in $\BMOA(\mathbb{C}_0)\cap\mathcal{D}$, then $g$ is in $(\mathcal{H}^1)^\ast$. This inclusion can also be deduced directly from the two Littlewood--Paley formulas \eqref{eq:LPp} and \eqref{eq:LPpolar}, using the Cauchy--Schwarz inequality and Lemma~\ref{lem:carlesonbmohalfplane}.

\subsection{On the finite polydisc $\mathbb{D}^d$} Let us now confine ourselves to studying Dirichlet series
\[f(s) = \sum_{n=1}^\infty a_n n^{-s}\]
restricted to the first $d$ primes, by demanding that $a_n=0$ if $p_j|n$, for $j>d$. Through the Bohr lift, the restricted Hardy spaces $\mathcal{H}^p_d$ (which are complemented subspaces of $\mathcal{H}^p$) are isometrically identified with $H^p(\mathbb{D}^d)$. We consider now a Dirichlet series $g$ restricted to the first $d$ primes and let $\mathbf{T}_g$ act on $\mathcal{H}^p_d$. 
\begin{cor}
	\label{cor:H^pd} For $0<p<\infty$, $\mathbf{T}_g$ is bounded on $\mathcal{H}^p_d$ if and only if it is bounded on $\mathcal{H}^2_d$. 
\end{cor}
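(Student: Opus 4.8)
The plan is to deduce the corollary from the Carleson measure characterization of Theorem~\ref{thm:avcarl}, so that the whole content becomes showing that, on the \emph{finite} polydisc, the embedding \eqref{eq:tempo} is independent of $p$. Fixing the first $d$ primes, the map $(\chi,\sigma)\mapsto z$ with $z_j=p_j^{-\sigma}\chi(p_j)$ sends $\mathbb{T}^d\times(0,\infty)$ onto the radial tube $\{z\in\mathbb{D}^d:|z_j|=p_j^{-\sigma}\}$, and under it the left-hand side of \eqref{eq:tempo} becomes $\int_{\mathbb{D}^d}|F(z)|^p\,d\mu_g(z)$, where $F=\mathcal{B}f$ and $\mu_g$ is the pushforward of $|g'_\chi(\sigma)|^2\sigma\,d\sigma\,dm_d(\chi)$. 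Thus, by Theorem~\ref{thm:avcarl}, the corollary is equivalent to the assertion that $\mu_g$ is a Carleson measure for $H^2(\mathbb{D}^d)$ if and only if it is one for $H^p(\mathbb{D}^d)$, for every $0<p<\infty$.

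One inclusion is free: the estimate \eqref{eq:cpgn}, which is purely formal and survives the restriction to $d$ variables, already gives $C(g,p)\geq C(g,np)$ for every positive integer $n$, so boundedness on $\mathcal{H}^p_d$ propagates to $\mathcal{H}^{np}_d$. The substance is the reverse passage, say from $p=2$ down to a small, or to a non-commensurable, exponent. Here I would exploit that $d$ is finite in two ways unavailable on $\mathbb{T}^\infty$. First, Lemma~\ref{lempoint} provides the honest Poisson kernel, hence the pointwise domination $|F(z)|^\alpha\le\int_{\mathbb{T}^d}P_z(w)|F(w)|^\alpha\,dm_d(w)$ for all $0<\alpha\le p$. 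Second, since the tube approaches the distinguished boundary $\mathbb{T}^d$ simultaneously in all coordinates, the relevant approach regions are the single admissible (non-tangential) cones for which the maximal function theory of Gundy and Wheeden \cite{GW7374}, already invoked in the proof of Theorem~\ref{thm:avcarl}, applies and yields $\|N(F)\|_{L^p(\mathbb{T}^d)}\asymp\|F\|_{H^p(\mathbb{D}^d)}$, where $N(F)$ is the non-tangential maximal function.

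With these tools the plan is the classical layer-cake transfer. From the $p=2$ embedding I would extract a $p$-free geometric condition on $\mu_g$, namely $\mu_g\big(T(O)\big)\ll m_d(O)$ for every open $O\subseteq\mathbb{T}^d$, where $T(O)$ is the tent over $O$; the $p=2$ Carleson bound should yield this by testing on outer functions $F_O$ with $|F_O|\approx\mathbf{1}_O$ on $\mathbb{T}^d$, whose moduli are bounded below on $T(O)$ by Lemma~\ref{lempoint}. Once the tent condition is in hand, the inclusion $\{|F|>\lambda\}\subseteq T(\{N(F)>\lambda\})$ together with the distribution-function identity gives
\[\int_{\mathbb{D}^d}|F|^p\,d\mu_g=\int_0^\infty p\lambda^{p-1}\mu_g\big(\{|F|>\lambda\}\big)\,d\lambda\ll\int_0^\infty p\lambda^{p-1}m_d\big(\{N(F)>\lambda\}\big)\,d\lambda=\|N(F)\|_{L^p}^p\asymp\|F\|_{H^p}^p,\]
which is \eqref{eq:tempo} for every $p$, with constant controlled by $C(g,2)$.

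The main obstacle is the extraction of the $p$-free tent condition from the analytic $L^2$ embedding on a genuinely multi-parameter domain: one must produce enough test functions and control their moduli on tents, and one must be careful \emph{not} to pass through the harmonic Carleson condition $\int_{\mathbb{D}^d}\!\big(\int_{\mathbb{T}^d}P_z(w)u(w)\,dm_d(w)\big)\,d\mu_g(z)\ll\int_{\mathbb{T}^d}u\,dm_d$, which is the stronger and more delicate condition. It is precisely this step, the equivalence of the analytic Carleson embeddings across $p$, that for infinitely many variables would answer Question~\ref{que:Hp}, and it is what forces the restriction to finite $d$, where the Poisson kernel of Lemma~\ref{lempoint} and the admissible maximal function are at our disposal.
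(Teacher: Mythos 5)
Your reduction is the right one, and it is in fact the paper's strategy: the proof given there consists of invoking Theorem~\ref{thm:avcarl} to turn \eqref{eq:tempo} into a Carleson embedding over $\mathbb{D}^d$ and then citing \cite{Chang79} for the fact that the Carleson measures of $H^p(\mathbb{D}^d)$ are independent of $p$. Your sufficiency half is also sound: once the tent condition $\mu_g(T(O))\ll m_d(O)$ is known for \emph{arbitrary} open $O\subseteq\mathbb{T}^d$, the inclusion $\{|F|>\lambda\}\subseteq T(\{N(F)>\lambda\})$, the layer-cake identity, and the equivalence $\|N(F)\|_{L^p(\mathbb{T}^d)}\asymp\|F\|_{H^p(\mathbb{D}^d)}$ for product cones give the embedding for every $0<p<\infty$; and \eqref{eq:cpgn} indeed handles the passage from $p$ to $np$ for free.

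The gap is the step you yourself flag as the main obstacle, and it is not a technicality. You propose to extract the tent condition from the analytic $L^2$ embedding by testing on ``outer functions $F_O$ with $|F_O|\approx\mathbf{1}_O$''. On $\mathbb{D}^d$ with $d\geq2$ such functions do not exist in general: there is no inner--outer factorization on the polydisc, and one cannot prescribe the boundary modulus of a holomorphic function, since $P[\log\varphi]$ is $d$-harmonic but typically not the real part of a holomorphic function. Lemma~\ref{lempoint} cannot substitute for this, because subharmonicity bounds $|F(z)|$ from \emph{above} by boundary averages, whereas your test function must be bounded from \emph{below} on the tent. Testing on products of one-variable outer functions only yields the tent condition over product open sets, which is essentially the box condition over rectangles --- and Carleson's counterexample on the bidisc shows that condition is strictly weaker and insufficient. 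Deriving the tent condition over arbitrary open sets from the (analytic, $p=2$) embedding is precisely the nontrivial content of Chang's theorem, so as written your argument assumes at its crucial point the very fact it set out to prove. The fix is either to quote \cite{Chang79}, as the paper does, or to reproduce Chang's argument; the outer-function route is not available in several variables.
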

\begin{proof}
	This follows from Theorem~\ref{thm:avcarl}, since the Carleson measure characterization is now over $\mathbb{D}^d$, and the Carleson measures of $H^p(\mathbb{D}^d)$ are independent of $p$ (see \cite{Chang79}). 
\end{proof}

Moreover, using the result that $H^2(\mathbb{D}^d)\odot H^2(\mathbb{D}^d)=H^1(\mathbb{D}^d)$ from \cite{FL02,LT09}, we conclude that symbols inducing bounded $\mathbf{T}_g$-operators on the finite polydisc belong to $(H^1(\mathbb{D}^d))^\ast$. This subsection is devoted to showing that, even in the finite-dimensional setting, the dual of $H^1$ still does not characterize the bounded $\mathbf{T}_g$-operators.

Let $D$ denote the differentiation operator on Dirichlet series,
\[Df(s) := f'(s) = -\sum_{n=2}^\infty a_n (\log n) n^{-s}.\]
Identifying again $\mathcal{H}^p_d$ with $H^p(\mathbb{D}^d)$, we find that we may write
\begin{equation}
	\label{eq:diridiff} Df(z_1,\,\ldots,\,z_d) = -\sum_{j=1}^d (\log p_j) z_j 
	\partial_{z_j} f(z_1,\,\ldots,\,z_d). 
\end{equation}
Note the similarity between $D$ and the radial differentiation operator 
\begin{equation}
	\label{eq:raddiff} Rf(z_1,\,\ldots,\,z_d) := \sum_{j=1}^d z_j 
	\partial_{z_j} f(z_1,\,\ldots,\,z_d). 
\end{equation}
The Volterra operator $T_g$ defined with the radial differentiation operator $R$ and radial integration $R^{-1}$ has previously been investigated on the unit ball $\mathbb{B}_d$ of $\mathbb{C}^d$ by a number of authors. A seminal contribution is that of Pau \cite{Pau13}, who proved that $T_g$ is bounded on $H^p(\mathbb{B}_d)$ if and only if $g$ is in $\BMOA(\mathbb{B}_d)$. In particular, for $p=2$, the $T_g$ operator is bounded if and only if the corresponding Hankel operator is bounded, i.e., if and only if $g$ defines a bounded linear functional on $H^2(\mathbb{B}_d) \odot H^2(\mathbb{B}_d)$. 

We shall now see that the corresponding statement is not true on the finite polydisc $\mathbb{D}^2$. The statement and proof are written for the Volterra operator defined in terms of radial differentiation \eqref{eq:raddiff}, but the argument works equally well for the half-plane differentiation \eqref{eq:diridiff}. In the following theorem, we use the notation
$g_1\otimes g_2(z,w):=g_1(z)g_2(w)$.
\begin{thm}
	\label{thm:finitecase} There exist a function $g_1$ in $H^\infty(\mathbb{D})$ and  a function $g_2$ in $\BMOA(\mathbb{D})$ such that $T_{g_1\otimes g_2}$ is unbounded on $H^2(\mathbb{D}^2)$. 
	\end{thm}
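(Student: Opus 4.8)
The plan is to combine the Carleson measure characterization with the elementary fact that $\BMOA(\D)$ is not preserved under multiplication by $H^\infty(\D)$. By Corollary~\ref{cor:H^pd} and Theorem~\ref{thm:avcarl} at $p=2$, the operator $T_{g_1\otimes g_2}$ is bounded on $H^2(\D^2)$ if and only if the measure built from $|Rg|^2$, with $R$ the radial derivative \eqref{eq:raddiff}, induces the (one-parameter) Carleson embedding over $\T^2$ and the common dilation variable. Since $Rg=(zg_1')\otimes g_2+g_1\otimes(wg_2')$, it suffices to exhibit $g_1\in H^\infty(\D)$ and $g_2\in\BMOA(\D)$ for which this embedding fails, and the guiding heuristic is that only the second summand, which carries $g_2'$, should be dangerous.

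For the choice of symbols I would take $g_1(z)=(1-z)^i$ and $g_2(z)=\log\frac{1}{1-z}$. Here $g_1\in H^\infty(\D)$ while $|g_1'(z)|\asymp|1-z|^{-1}$, and $g_2\in\BMOA(\D)$ is the standard unbounded example. The key point is that the pointwise product $g_1g_2$ is \emph{not} in $\BMOA(\D)$: differentiating and testing the Carleson condition of Lemma~\ref{lem:carlesonbmohalfplane} produces an extra logarithmic factor, so that $|(g_1g_2)'|^2(1-|z|)\,dA$ fails to be Carleson. Equivalently, by Pommerenke's theorem \cite{Pomm77}, the one-variable Volterra operator $T_{g_1g_2}$ is unbounded on $H^2(\D)$. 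At the same time $g_1\otimes g_2$ lies in $(H^1(\D^2))^\ast$, being the tensor product of a bounded function and a $\BMOA$ function; this is precisely the feature that makes the example relevant.

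The heart of the argument is then a transference showing that boundedness of $T_{g_1\otimes g_2}$ on $H^2(\D^2)$ would force the one-variable diagonal condition, i.e. boundedness of $T_{g_1g_2}$ on $H^2(\D)$, which we have just excluded. On the diagonal $z=w=u$ the radial derivative restricts to $u\,\frac{d}{du}$ and $g_1\otimes g_2$ restricts to $g_1g_2$, so the natural route is to bound the bidisc Carleson integral from below by the one-variable Carleson integral for $g_1g_2$. I would implement this by lifting the one-variable test functions $F$ witnessing the unboundedness of $T_{g_1g_2}$ to packets $f$ on $\D^2$ that concentrate the angular variables $(\theta_1,\theta_2)$ near the diagonal $\theta_1=\theta_2$, and using the nonnegativity of the integrand to discard the off-diagonal contribution.

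The main obstacle will be this transference step. The two disc variables are coupled only through the single dilation parameter, the diagonal restriction of $H^2(\D^2)$ is a weighted space rather than $H^2(\D)$, and one must verify that the harmless term $(zg_1')\otimes g_2$ together with the cross terms does not cancel the divergence coming from $g_1\otimes(wg_2')$. I expect the cleanest implementation either to localize in $\theta_1-\theta_2$ by an approximate identity and track the resulting loss, or to bypass the geometry entirely by a direct estimate at the level of Taylor coefficients, mirroring the reduction in the proof of Theorem~\ref{thmlin} but now exhibiting a logarithmic divergence of the embedding sum for the symbols $g_1,g_2$ above.
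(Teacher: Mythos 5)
Your one-variable observations are fine: $g_1(z)=(1-z)^i$ is in $H^\infty(\D)$, $g_2=\log\frac{1}{1-z}$ is in $\BMOA(\D)$, and the product $g_1g_2$ is not in $\BMOA(\D)$ (the term $g_1'g_2$ contributes $|1-z|^{-1}\log\frac{1}{|1-z|}$, which destroys the Carleson condition). But the entire proof rests on the transference step, and that step is both unproved and aimed at the wrong target. The diagonal restriction of $H^2(\D^2)$ is the Bergman space of $\D$ (the monomials $z^mw^n$ with $m+n=k$ all restrict to $u^k$, so the restricted norm is $\sum_k|c_k|^2/(k+1)$), and the Volterra operator on the Bergman space is bounded if and only if the symbol is in the \emph{Bloch} space, not $\BMOA(\D)$. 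Consequently, boundedness of $T_{g_1\otimes g_2}$ on $H^2(\D^2)$ does \emph{not} force boundedness of $T_{g_1g_2}$ on $H^2(\D)$, which is the implication you propose to prove; your plan of lifting the $H^2(\D)$ test functions that witness $g_1g_2\notin\BMOA(\D)$ and concentrating them angularly near the diagonal cannot recover the $H^2(\D)$ embedding, because the $H^2(\D^2)$ norm of such a packet is comparable to the Bergman norm of its diagonal trace, not to the $H^2(\D)$ norm. (As it happens, your $g_1g_2$ is also not Bloch, since $|(g_1g_2)'(r)|(1-r)\asymp\log\frac{1}{1-r}$, so the diagonal route could be repaired by replacing $H^2(\D)$ and $\BMOA$ with the Bergman space and Bloch throughout — and then no angular concentration is needed at all, since restriction to the diagonal is bounded and onto with a bounded lifting. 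But as written the argument proves the wrong one-variable fact and leaves the decisive step as an acknowledged obstacle.) A second, conceptual error: your guiding heuristic that the summand $g_1\otimes(wg_2')$ is the dangerous one is exactly backwards. For \emph{any} $g_1\in H^\infty(\D)$ and $g_2\in\BMOA(\D)$ that term contributes a bounded operator, because $f_1g_1\in H^2(\D)$, $wf_2g_2'$ is the derivative of an $H^2(\D)$ function (boundedness of the one-variable $T_{g_2}$), and $(m+n)^{-2}\le n^{-2}$.

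The paper's proof is quite different. It writes $fR(g_1\otimes g_2)=f_1f_2\left(zg_1'g_2+wg_1g_2'\right)$, disposes of the $g_2'$ term as above, and locates the unboundedness in the term $zf_1g_1'\otimes f_2g_2$: choosing $f_2(w)=\sum_{n\ge2}w^n/(\sqrt{n}\log n)$ and $g_2=-\log(1-w)$ makes the coefficients of $f_2g_2$ of size $\gg n^{-1/2}$, and the failure comes from the genuinely two-variable fact that $R^{-1}$ only divides the $(m,n)$ coefficient by $m+n$, producing the divergent sum $\sum_m|a_m|^2\log(m+2)/(m+1)^2$ for a suitable $h_1=\sum a_mz^m$ in $z\partial H^2(\D)$. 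The price is that $g_1$ is not explicit: the Aleksandrov--Peller factorization is invoked to write $h_1=z\sum_{j=1}^4f_1^j(g_1^j)'$ with $f_1^j\in H^2(\D)$ and $g_1^j\in H^\infty(\D)$, and one of the four pairs must work. If you want an explicit $g_1$ such as $(1-z)^i$, you would instead have to verify by hand that the coefficients of $f_1\cdot(1-z)^{i-1}$ are large for some $f_1\in H^2(\D)$, i.e.\ that the oscillating phases $m^{-i}$ do not cause cancellation — which is precisely the difficulty the factorization theorem circumvents.
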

To obtain the desired conclusion from this theorem, namely that $T_g$ is not bounded simultaneously with the Hankel operator $H_g$ even on the bidisc, it suffices to observe that the symbol $g_1\otimes g_2$ is in $\BMOA(\mathbb{D}^2)$ and therefore in $\left( H^2(\mathbb{D}^2) \odot H^2(\mathbb{D}^2) \right)^* = \left(H^1(\mathbb{D}^2)\right)^*$. 

\begin{proof}[Proof of Theorem~\ref{thm:finitecase}]
	Suppose that $f(z,w) = \sum_{m,n\geq0} a_{m,n} z^m w^n$. Then
	\[Rf(z,w) = \sum_{m,n\geq0} (m+n) a_{m,n}z^m w^n \qquad \text{and} \qquad R^{-1} f(z,w) = \sum_{m,n\geq0\atop m+n>0} \frac{a_{m,n}}{m+n}z^m w^n.\]
	We consider the Volterra operator $T_gf = R^{-1}(fRg)$, choosing $f= f_1\otimes f_2$, where $f_1$ and $f_2$ are both in $H^2(\mathbb{D})$. We compute and find that
	\begin{equation}
		\label{eq:multdisc} f(z,w) Rg (z,w) = f_1(z)f_2(w)\left(zg'_1(z)g_2(w) + wg_1(z)g'_2(w)\right). 
	\end{equation}
	We consider first the second term of \eqref{eq:multdisc}, which we write as $h_1(z)h_2(w)$, where
	\[h_1(z) := f_1(z)g_1(z) = \sum_{m=0}^\infty a_m z^m \qquad \text{and} \qquad h_2(w) := wf_2(w)g'_2(w) = \sum_{n=1}^\infty b_n w^n.\]
	Since $f_1$ is in $H^2(\mathbb{D})$ and $g$ is in $H^\infty(\mathbb{D})$, clearly $h_1$ is in $H^2(\mathbb{D})$, so $\sum_{m\geq0} |a_m|^2 < \infty$. In a similar way, we see that $h_2$ is the derivative of a function in $H^2(\mathbb{D})$ because $f_2 $ is in $H^2(\mathbb{D})$ and $g_2$ is  in $\BMOA(\mathbb{D})$ so that the operator $T_{g_2}$ is bounded on $H^2(\mathbb{D})$. This means that $\sum_{n\geq1} |b_n|^2/n^2 < \infty$. We conclude therefore that
	\[\big\|R^{-1}(h_1h_2)\big\|_{H^2(\mathbb{D}^2)}^2 = \sum_{m=0}^\infty \sum_{n=1}^\infty \frac{|a_m|^2 |b_n|^2}{(m+n)^2} \leq \sum_{m=0}^\infty |a_m|^2 \sum_{n=1}^\infty \frac{|b_n|^2}{n^2}< \infty.\]
	
	Changing our attention to the first term in \eqref{eq:multdisc}, it remains for us to show that we can pick $f_1$, $f_2$, $g_1$, and $g_2$ satisfying our assumptions, so that the $H^2(\mathbb{D}^2)$-norm of
	\[R^{-1}\left(zf_1(z)g'_1(z)f_2(w)g_2(w)\right)\]
	is infinite. Replace for the moment $zf_1(z)g'_1(z)$ with an arbitrary function $h_1$ in $z
	\partial H^2(\mathbb{D})$, say
	\[h_1(z) = \sum_{m=1}^\infty a_m z^m.\]
	Choose $f_2$ and $g_2$ as
	\[f_2(w) = \sum_{n=2}^\infty \frac{w^n}{\sqrt{n}(\log{n})} \qquad \text{and} \qquad g_2(w) = -\log(1-w).\]
	The coefficients of $h_2(w): = f_2(w)g_2(w) = \sum_{n\geq3} b_n w^n$ are given by
	\[b_n = \sum_{k=2}^{n-1} \frac{1}{\sqrt{k}(\log{k})}\,\frac{1}{(n-k)} \gg \frac{1}{\sqrt{n}(\log{n})}\sum_{k=2}^{n-1} \frac{1}{n-k} \gg \frac{1}{\sqrt{n}}.\]
	Hence we find that
	\[\big\|R^{-1}(h_1h_2)\big\|_{H^2(\mathbb{D}^2)}^2 \gg \sum_{m=1}^\infty \sum_{n=3}^\infty \frac{|a_m|^2}{(m+n)^2n} \asymp \sum_{m=1}^\infty \frac{|a_m|^2\log(m+2)}{(m+1)^2} = \infty\]
	for an appropriate choice of $h_1$ in $z
	\partial H^2(\mathbb{D})$. However, by a factorization result of Aleksandrov and Peller \cite{AP96}, there exist $f_1^j$ in $H^2(\mathbb{D})$ and $g_1^j$ in $H^\infty(\mathbb{D})$ for $1 \leq j \leq 4$, such that
	\[h_1(z) = z\sum_{j=1}^4 f_1^j(z)(g_1^j)'(z). \]
	Therefore, at least one of the four pairs $(f_1^j, g_1^j)$, $1 \leq j \leq 4$, will do as the choice of $(f_1,g_1)$. 
\end{proof}

\section{Compactness of $\mathbf{T}_g$ on $\mathcal{H}^2$} \label{sec:RPK}
\subsection{Basic results}
We turn to a brief discussion of compactness of  $\mathbf{T}_g$. Every polynomial symbol $g(s) = \sum_{n\leq N} b_n n^{-s}$ defines a compact $\mathbf{T}_g$-operator, since in this case $\mathbf{T}_g$ is the sum of $N$ diagonal operators with entries in $c_0$. This means that all bounded operators from Section \ref{sec:homogen} actually are compact. To see this, let $S_N$ denote the partial sum operator, acting on a Dirichlet series $f(s) = \sum_{n\geq1} a_n n^{-s}$ by
\[S_Nf(s) = \sum_{n=1}^N a_n n^{-s}.\]
Suppose now that we have an estimate of the type $\|\mathbf{T}_g\|^2 \leq \sum_{n\geq2} |b_n|^2 w(n)$ for some positive weight function $w(n)$. If the right hand side is finite for some Dirichlet series $g$, then
\[\|\mathbf{T}_g - \mathbf{T}_{S_Ng}\|^2 \leq \sum_{n \geq N} |b_n|^2 w(n) \to 0, \qquad N \to \infty, \]
demonstrating that $\mathbf{T}_g$ is compact. In particular, every bounded $\mathbf{T}_g$-operator with a linear symbol is compact, since then $\|\mathbf{T}_g\|_{\mathcal{L}(\mathcal{H}^2)}=\|g\|_{\mathcal{H}^2}$, by Theorem~\ref{thm:linear}. Let us also mention that the Volterra operator defined by the primitive of the zeta function considered in Theorem~\ref{thm:zetabmo},
\[g(s) = \sum_{n=2}^\infty \frac{1}{n\log{n}}n^{-s},\]
is compact by this argument and Theorem~\ref{thm:genweight}. In the next subsection, we will produce a concrete example of a non-compact operator, by testing the Volterra operator of Theorem~\ref{thmsuf}, for $\lambda=1$, against reproducing kernels for suitable subspaces of $\mathcal{H}^2$.
 
We mention that it is possible to prove versions of Theorems \ref{thm:bmocond}, \ref{thm:avcarl}, and \ref{thm:bmoxp} for compactness, by replacing bounded mean oscillation by vanishing mean oscillation, and embeddings by vanishing embeddings. The details are standard, see for instance \cite{Pau13} for the arguments in a different setting. 

We present only two results in this section. The first is that the closure of Dirichlet polynomials in $\BMOA(\mathbb{C}_0)$ is $\VMOA(\mathbb{C}_0) \cap \mathcal{D}$, as it relies on the translation invariance (i) of Lemma~\ref{lem:chibmo} enjoyed by Dirichlet series in $\BMOA(\mathbb{C}_0)$. Recall that $\VMOA(\mathbb{C}_0)$ consists of those $g\in \BMOA(\mathbb{C}_0)$ such that
\[\lim_{\delta \to 0^+}\sup_{|I| < \delta} \frac{1}{|I|}\int_{I}\left|f(it)-\frac{1}{|I|}\int_I f(i\tau)\,d\tau\right|\,dt = 0.\]
We endow the space $\BMO(\mathbb{C}_\theta)\cap\mathcal{D}$ with the norm $\|f\|_{\BMO(\mathbb{C}_\theta)\cap\mathcal{D}} := |f(+\infty)| + \|f\|_{\BMO(\mathbb{C}_\theta)}$.
\begin{thm}
	\label{thm:vmoacomp} Let $g$ be a symbol in $\VMOA(\mathbb{C}_0)\cap\mathcal{D}$ and $\varepsilon$ be a positive number. Then there is a Dirichlet polynomial $P$ such that $\|g-P\|_{\BMO(\mathbb{C}_\theta)\cap\mathcal{D}}<\varepsilon$. 
\end{thm}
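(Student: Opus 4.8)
The plan is to approximate $g$ in two stages. For $\delta>0$ set $g_\delta(s):=g(s+\delta)=\sum_{n\ge 1} b_n n^{-\delta} n^{-s}$, the horizontal translate of $g$. I would first show that $g_\delta \to g$ in $\BMO(\mathbb C_0)\cap\mathcal D$ as $\delta \to 0^+$ (this is where the vanishing mean oscillation of $g$ is used), and then show that each individual $g_\delta$ is approximable in the same norm by its own partial sums $S_N g_\delta$. Choosing $\delta$ small and then $N$ large produces the desired polynomial $P=S_N g_\delta$, since $\|g-P\|_{\BMO(\mathbb C_0)\cap\mathcal D}\le \|g-g_\delta\|_{\BMO(\mathbb C_0)\cap\mathcal D} + \|g_\delta - S_N g_\delta\|_{\BMO(\mathbb C_0)\cap\mathcal D}$.

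The second stage is the routine one, and is where Lemma~\ref{lem:chibmo}(i) enters. Since $g$ lies in $\mathcal D\cap\BMOA(\mathbb C_0)$, part (i) gives $\sigma_b(g)\le 0$, so the translate $g_\delta$ has $\sigma_b(g_\delta)\le -\delta<0$. By Bohr's theorem \cite{Bohr13} the Dirichlet series of $g_\delta$ then converges uniformly on the closed half-plane $\mre s\ge 0$, whence $\|g_\delta - S_N g_\delta\|_{L^\infty(\mathbb C_0)}\to 0$ as $N\to\infty$. Because the mean oscillation of any function is dominated by twice its supremum norm, and because $(g_\delta-S_N g_\delta)(+\infty)\to 0$ as well, we obtain $\|g_\delta - S_N g_\delta\|_{\BMO(\mathbb C_0)\cap\mathcal D}\to 0$. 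Note also that $(g-g_\delta)(+\infty)=b_1-b_1=0$, so in the first stage only the genuine $\BMO$ seminorm has to be controlled.

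The first stage is the heart of the matter. I would use the Carleson-box description of the (translation invariant) $\BMO(\mathbb C_0)$ seminorm, as in Lemma~\ref{lem:carlesonbmohalfplane} and \cite[Ch.~VI]{Garnett}, so that with $h_\delta:=g-g_\delta$ it suffices to show
\[
\sup_{I}\frac{1}{|I|}\int_I\int_0^{|I|} |h_\delta'(\sigma+it)|^2\,\sigma\,d\sigma\,dt \longrightarrow 0, \qquad \delta\to 0^+,
\]
where $h_\delta'(s)=-\sum_{n\ge 2} b_n(\log n)(1-n^{-\delta})n^{-s}$. I would split each Carleson box $I\times(0,|I|)$ at height $\sigma=1$. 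For the tall part $\sigma\ge 1$ the elementary bound $n^{-(\sigma-1)}\le 2^{-(\sigma-1)}$ for $n\ge 2$, together with the Cauchy--Schwarz inequality, gives $|h_\delta'(\sigma+it)|\le C\,2^{-\sigma}\|h_\delta\|_{\mathcal H^2}$; since $\|h_\delta\|_{\mathcal H^2}^2=\sum_{n\ge 2}|b_n|^2(1-n^{-\delta})^2\to 0$ by dominated convergence (using $g\in\mathcal H^2$ from Lemma~\ref{lem:chibmo}(iii)), this contribution tends to $0$ uniformly in $I$. The short part $\sigma<1$ of a long box reduces, after covering $I$ by unit subintervals, to the corresponding quantity over boxes of length at most $1$, so the whole problem collapses to boxes with $|I|\le 1$.

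For the remaining small boxes I would estimate $|h_\delta'|^2\le 2|g'|^2+2|g_\delta'|^2$. The contribution of $g'$ over $I\times(0,|I|)$ with $|I|\le 1$ is exactly the vanishing mean oscillation modulus of $g$, which tends to $0$ as $|I|\to 0$ by hypothesis; the contribution of $g_\delta'$ is, after the change of variables $\sigma\mapsto\sigma+\delta$, the mass of $|g'|^2\,\sigma\,d\sigma\,dt$ over a box of comparable size and is controlled the same way once $\delta\lesssim|I|$, while the residual regime $|I|<\delta$ is handled by the local boundedness of $g'$ away from the boundary. Assembling these estimates and choosing first $|I|$ small (to kill the small boxes) and then $\delta$ small (to kill the tall part) yields $\|h_\delta\|_{\BMO(\mathbb C_0)}\to 0$. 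The main obstacle is precisely this uniformity over all interval lengths: the given definition of $\VMOA(\mathbb C_0)$ only controls short intervals, so the large-scale behavior must be absorbed by the Dirichlet-series structure (the $\mathcal H^2$-smallness of $g-g_\delta$ and the decay of $g$ toward $b_1$ at $+\infty$), and keeping the small-box and large-box estimates mutually compatible is the delicate point.
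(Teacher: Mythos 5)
Your two-stage plan (translate by $\delta$, then take a partial sum) is exactly the paper's: the paper chooses $P=S_N g_\delta$, and your second stage --- Bohr's theorem applied to $g_\delta$, which has $\sigma_b\le-\delta<0$ by Lemma~\ref{lem:chibmo}(i), together with the trivial domination of the $\BMO$ seminorm by twice the sup norm --- coincides with the paper's argument and is correct. The difference is in the first stage. The paper disposes of $\lim_{\delta\to0^+}\|g-g_\delta\|_{\BMO(\mathbb{C}_0)}=0$ with a one-line citation of the classical characterization of $\VMO$ (\cite[Thm.~VI.5.1]{Garnett}, applicable since $g$ is the Poisson integral of its boundary values), whereas you attempt a direct Carleson-box proof, and as written that argument does not close. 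The problem is the one you flag yourself: after the tall parts $\sigma\ge1$ are killed by the $\mathcal{H}^2$-smallness of $g-g_\delta$ and long boxes are reduced to unit boxes, you are left with boxes of every side length $|I|\le 1$, and your only estimate for these is $|h_\delta'|^2\le 2|g'|^2+2|g_\delta'|^2$, which is $o(1)$ only as $|I|\to0$ and gives merely $O(\|g\|_{\BMO(\mathbb{C}_0)}^2)$ for, say, $|I|=1/2$. ``Choosing first $|I|$ small and then $\delta$ small'' is not a legitimate order of quantifiers: the $\BMO$ seminorm is a supremum over all $I$ simultaneously, and boxes with $\eta\le|I|\le1$ are never controlled by your estimates.

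The gap is repairable, but the horizontal split must be made at a fixed height rather than at height depending on $|I|$: cut every box at $\sigma=\rho$ with $\rho$ independent of $I$. The bottom strip $0<\sigma<\rho$ is handled by covering $I$ with intervals of length $\rho$ and invoking the $\VMO$ modulus (for the $g_\delta'$ term, first shift $\sigma\mapsto\sigma+\delta$); this is small once $\rho$ is small, uniformly in $\delta$ and in $I$. On $\{\sigma\ge\rho\}$ one uses that $\sigma_b(g)\le0$ makes $g$, and hence $g''$, uniformly bounded on $\mathbb{C}_{\rho/2}$, so that $|h_\delta'(\sigma+it)|=|g'(\sigma+it)-g'(\sigma+\delta+it)|\le C_\rho\,\delta$ there; this contribution vanishes as $\delta\to0$ for fixed $\rho$, and the two regimes now assemble in the correct order ($\rho$ first, then $\delta$). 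Alternatively, and more in the spirit of the paper, simply cite the classical equivalence between membership in $\VMO$ and convergence of the Poisson (equivalently, horizontal-translate) regularizations in the $\BMO$ norm.
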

\begin{proof}
	Let $B_\delta$ denote the horizontal shift operator given by $B_\delta g(s) = g(s+\delta)$, and, as above, let $S_N$ denote the partial sum operator. We choose $P = B_\delta S_N g$, for some $\delta>0$ and $N$ to be specified later. Clearly $P(+\infty)=b_1=g(+\infty)$. Since $g$ is in $\VMOA(\mathbb{C}_0)$, we know from \cite[Thm.~VI.5.1]{Garnett} that
	\[\lim_{\delta\to0}\|g - B_\delta g\|_{\BMO(\mathbb{C}_0)}=0.\]
	Choose $\delta>0$ so that $\|g - B_\delta g\|_{\BMO(\mathbb{C}_\theta)}<\varepsilon/2$. Then
	\[\|g - P\|_{\BMO(\mathbb{C}_0)} \leq \|g - B_\delta g \|_{\BMO(\mathbb{C}_0)} + \|B_\delta g - P\|_{\BMO(\mathbb{C}_0)} < \varepsilon/2 + 2\|B_\delta g - B_\delta S_N g\|_{\mathcal{H}^\infty}.\]
	Now, by (i) of Lemma~\ref{lem:chibmo}, we know that $\sigma_b(g)\leq 0$. By a theorem of Bohr \cite{Bohr13}, this implies that $S_N g(s)$ converges uniformly to $g(s)$ in the closed half-plane $\mathbb{C}_\delta$, for every $\delta>0$. Hence there is some $N=N(g,\delta)$ such that $\|B_\delta g - B_\delta S_N g\|_{\mathcal{H}^\infty}=\|B_\delta(g - S_N g)\|_{\mathcal{H}^\infty}< \varepsilon/4.$ 
\end{proof}

Our second basic result is that $\mathbf{T}_g$ is never in any Schatten class, unless $g$ is constant. This is in line with \cite[Thm.~6.7]{Pau13}, showing that a radial Volterra operator $T_g\neq0$ defined on $H^2(\mathbb{B}_d)$ can be in the Schatten class $S_p$ only for $p>d$.
\begin{thm}
	\label{thm:schatten} Let
	\[g(s) = \sum_{n=1}^\infty b_n n^{-s}\]
	be a non-constant Dirichlet series. Then $\mathbf{T}_g\colon\mathcal{H}^2\to\mathcal{H}^2$ is not in $S_p$, for any $p < \infty$. 
	\begin{proof}
		Since $g$ is not constant, we know there is at least one non-zero term, so set
		\[N = \inf\left\{n \geq 2 \, :\, b_n \neq 0\right\} < \infty.\]
		We will use \cite[Thm.~1.33]{Zhu} in the following way: Set $e_n(s): = n^{-s}$ and
assume that $2\leq p < \infty$. Then the set $\{e_n\}_{n\geq1}$ forms an orthonormal basis for $\mathcal{H}^2$, so that:
		\[\|\mathbf{T}_g\|_{S_p}^p \geq \sum_{n=N}^\infty \|\mathbf{T}_g e_n\|_{\mathcal{H}^2}^p.\]
		A simple computation shows that if $n \geq N$, then we have
		\[\|\mathbf{T}_g e_n\|_{\mathcal{H}^2}^2 = \sum_{m=2}^\infty \frac{|b_m|^2(\log{m})^2}{(\log{mn})^2} \geq \frac{|b_N|^2(\log{N})^2}{(\log{nN})^2} \geq \frac{|b_N|^2(\log{N})^2}{(2\log{n})^2}.\]
		In particular, $\|\mathbf{T}_g e_n\|_{\mathcal{H}^2} \geq (|b_N|\log{N})/(2\log{n})$ and hence $\|\mathbf{T}_g\|_{S_p}^p \geq \infty$. The inclusion between Schatten classes allows us to conclude that $\mathbf{T}_g$ cannot be in $S_p$ for any $0 < p < \infty$. 
	\end{proof}
\end{thm}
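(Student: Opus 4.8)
The plan is to obtain a divergent lower bound for the Schatten $p$-norm by testing $\mathbf{T}_g$ against the canonical orthonormal basis $e_n(s) := n^{-s}$ of $\mathcal{H}^2$. The starting point is the standard diagonal estimate (see e.g.\ \cite[Thm.~1.33]{Zhu}): for $p \geq 2$ and any orthonormal sequence $\{e_n\}$ one has
\[\sum_{n} \|\mathbf{T}_g e_n\|_{\mathcal{H}^2}^p \leq \|\mathbf{T}_g\|_{S_p}^p,\]
which follows from the majorization of the diagonal of the positive operator $\mathbf{T}_g^\ast \mathbf{T}_g$ by its eigenvalue sequence, combined with convexity of $x \mapsto x^{p/2}$. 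It therefore suffices to show that the left-hand side diverges for every $p \geq 2$; the range $0 < p < 2$ is then handled by the nesting $S_q \subseteq S_p$ for $q \leq p$, since membership in some $S_q$ with $q < 2$ would force membership in $S_2$.

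Next I would compute $\mathbf{T}_g e_n$ explicitly at the level of coefficients. Since $g'(w) = -\sum_{m\geq 2} b_m (\log m) m^{-w}$, integrating term by term in \eqref{eq:vintop} gives
\[\mathbf{T}_g e_n(s) = \sum_{m \geq 2} \frac{b_m \log m}{\log(mn)}\, (mn)^{-s},\]
and, because the monomials $(mn)^{-s}$ are orthonormal as $m$ ranges over the integers $\geq 2$, we read off
\[\|\mathbf{T}_g e_n\|_{\mathcal{H}^2}^2 = \sum_{m\geq 2} \frac{|b_m|^2 (\log m)^2}{(\log mn)^2}.\]

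The decisive step is to isolate a single surviving term. Setting $N := \inf\{n \geq 2 : b_n \neq 0\}$, which is finite because $g$ is non-constant, I retain only the summand $m = N$ and use $\log(nN) \leq 2\log n$ for $n \geq N$ to obtain
\[\|\mathbf{T}_g e_n\|_{\mathcal{H}^2} \geq \frac{|b_N|\, \log N}{2 \log n}, \qquad n \geq N.\]
Raising to the $p$-th power and summing, one compares with $\sum_{n} (\log n)^{-p}$, which diverges for every finite $p$, and concludes $\|\mathbf{T}_g\|_{S_p} = \infty$ for all $p \geq 2$.

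The computations are elementary, so there is no serious analytic obstacle; the only point demanding care is the restriction $p \geq 2$ in the diagonal estimate, which prevents a direct application for small $p$ and forces the detour through the inclusion of Schatten classes. The underlying mechanism — that $\|\mathbf{T}_g e_n\|_{\mathcal{H}^2}$ decays only like $1/\log n$, rather than summably — is precisely the feature separating the very slow logarithmic weights intrinsic to $\mathbf{T}_g$ on Dirichlet series from the polynomial decay that membership in a Schatten class would require.
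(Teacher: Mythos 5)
Your proposal is correct and follows essentially the same route as the paper's own proof: the diagonal lower bound from \cite[Thm.~1.33]{Zhu} applied to the basis $e_n(s)=n^{-s}$, the coefficient computation of $\|\mathbf{T}_g e_n\|_{\mathcal{H}^2}^2$, isolation of the single term $m=N$ with the bound $\log(nN)\leq 2\log n$, divergence of $\sum_n(\log n)^{-p}$, and the Schatten class inclusions to cover $0<p<2$. There is nothing to add.
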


\subsection{Estimating $y$-smooth reproducing kernels} 
We will now study the action of $\mathbf{T}_g$ on reproducing kernels for suitable subspaces of $\mathcal{H}^2$. The reproducing kernel $k_w$ of $\mathcal{H}^2$ itself at $w$, where $\mre(w) > 1/2$, is given by
\[k_{w}(s) := \zeta\left(s+\overline{w}\,\right) = \prod_p \left(1-p^{-s-\overline{w}}\,\right)^{-1}. \]
Considering these reproducing kernels is insufficient in our analysis of the multiplicative symbol $g$ from Theorem \ref{thmsuf}. Indeed, regardless of the value of $\lambda$, the Dirichlet series $g(s)$ converges absolutely all the way down to $\mre(s) = \sigma > 0$. Testing $\mathbf{T}_g$ on the kernels $k_w$, in $\mathbb{C}_{1/2}$ is therefore not enough to detect that it is unbounded for $\lambda > 1$. 

To address this, we consider $y$-smooth reproducing kernels. Let $P^+(n)$ denote the largest prime factor of $n$. The integer $n$ is called $y$-smooth if $P^+(n)\leq y$. The $y$-smooth reproducing kernels, $k_w^y$ are defined for $\mre(w)>0$ and $y \geq 1$, by cutting off prime numbers larger than $y$. This means that we set $k_w^y(s) := \zeta\left(s+\overline{w}, y\right)$, where 
\[ \zeta\left(s+\overline{w}, y\right) := \prod_{p \leq y} \left(1 - p^{-s-\overline{w}}\right)^{-1}.\]

Notice that we already used another variant of cut-off kernels in the proof of Theorem \ref{thmsuf}. Following G\'al's construction, we tested against a finite-dimensional kernel at $\sigma=0$, cut off to be smooth (in the sense of primes) and retaining only suitable small powers of each prime. Our motivation for turning to the more involved investigation of the reproducing kernels $k_w^y(s)$ is that they provide slightly better estimates than the rougher argument stemming from G\'al's work. More specifically, we will see that the multiplicative symbol $g$ from Theorem \ref{thmsuf} with $\lambda=1$ provides the only concrete example of a non-compact $\mathbf{T}_g$-operator in this paper. As in Section~\ref{sec:multip}, we consider without loss of generality the operator $\widetilde{\mathbf{T}}_g$ instead of $\mathbf{T}_g$, the difference between the two being compact.

Suppose that $f(s) = \sum_{n\geq1} \varphi(n) n^{-s}$, where $\varphi$ is a non-negative completely multiplicative function and that $g(s) = \sum_{n\geq1} b_n n^{-s}$ has non-negative coefficients. A computation shows that 
\begin{equation}
	\label{eq:multcomp} \big\|\widetilde{\mathbf{T}}_g f\big\|_{\mathcal{H}^2}^2 = \sum_{m=2}^\infty\sum_{n=2}^\infty( b_m\log{m})(b_n\log{n})\varphi\left(\frac{mn}{(m,n)^2}\right)\sum_{k=1}^\infty \frac{\varphi(k)^2}{\left(\log k + \log\frac{mn}{(m,n)} \right)^2}. 
\end{equation}
We will now choose $f$ to be a $y$-smooth reproducing kernel and estimate the innermost sum.
\begin{lem}
	\label{lem:smoothest} Let $\varphi(n)$ be the completely multiplicative non-negative function defined by setting
	\[ \varphi(n):=\begin{cases} n^{-\sigma}, & \text{if } P^+(n)\le y, \\
	0, & \text{otherwise}. \end{cases} \]
	Fix $\alpha$, $0<\alpha<1$. If $y^{\alpha} \geq 1/\sigma$, then for sufficiently large $y$ (depending on $\alpha$), we have
	\[S_\varphi(m,n) := \sum_{k=1}^\infty \frac{\varphi(k)^2}{\left(\log k + \log\frac{mn}{(m,n)} \right)^2} \asymp \frac{\big\|k_\sigma^y\big\|_{\mathcal{H}^2}^2}{\left( \left(1+o(1)\right) (1-2\sigma)^{-1} y^{1-2\sigma} + \log\frac{mn}{(m,n)} \right)^2}, \]
	where $o(1)$ tends to $0$ as $y\to\infty$. 
\end{lem}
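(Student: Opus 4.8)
The plan is to collapse $S_\varphi(m,n)$ into a one‑parameter integral governed by the partial Euler product, and then read off the effective logarithmic scale from the prime number theorem. Write $M := \|k_\sigma^y\|_{\mathcal{H}^2}^2 = \sum_{P^+(k)\le y} k^{-2\sigma} = \prod_{p\le y}(1-p^{-2\sigma})^{-1}$ and $L := \log\frac{mn}{(m,n)}$; since the sums in \eqref{eq:multcomp} run over $m,n\ge 2$ we have $\tfrac{mn}{(m,n)}=\operatorname{lcm}(m,n)\ge 2$, so $L\ge \log 2>0$ and no denominator vanishes. The starting point is the elementary identity $(\log k + L)^{-2} = \int_0^\infty t\,e^{-(\log k + L)t}\,dt$, valid because $\log k + L>0$. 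Substituting it into $S_\varphi$ and interchanging the (positive) sum and integral gives the clean representation
\[ S_\varphi(m,n) = \int_0^\infty t\, e^{-Lt}\, Z(2\sigma + t)\,dt, \qquad Z(\beta) := \prod_{p\le y}(1-p^{-\beta})^{-1}, \]
since $\sum_{P^+(k)\le y} k^{-2\sigma-t} = Z(2\sigma+t)$. Thus the whole estimate is controlled by the decreasing function $t\mapsto Z(2\sigma+t)$, which starts at $Z(2\sigma)=M$.

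Next I would analyze how fast $Z(2\sigma+t)$ decays. Writing $Z(2\sigma+t) = M\exp\!\big(-\!\int_0^t \Lambda(2\sigma+s)\,ds\big)$ with $\Lambda(\beta) := -Z'(\beta)/Z(\beta) = \sum_{p\le y}\frac{\log p}{p^{\beta}-1}$, the claim is that $\Lambda(2\sigma)$ is precisely the effective scale $A$. Expanding $\frac{1}{p^\beta-1}=\sum_{j\ge 1}p^{-j\beta}$ and applying summation by parts with the prime number theorem in the form $\sum_{p\le y}(\log p)\,p^{-\beta}\sim (1-\beta)^{-1}y^{1-\beta}$, one identifies the leading ($j=1$) contribution as $(1+o(1))(1-2\sigma)^{-1}y^{1-2\sigma}=A$; the hypothesis $y^\alpha\ge 1/\sigma$, together with $\sigma$ bounded away from $\tfrac12$ and $y$ large, is what guarantees that this first term dominates the geometric tail $\sum_{j\ge 2}$ and that the PNT error is negligible against the main term, so that $\Lambda(2\sigma)=(1+o(1))A$. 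Because $\Lambda$ is slowly varying on the scale $1/A$ that the integrand lives on, I would then show $\int_0^t \Lambda(2\sigma+s)\,ds \asymp At$ for $t$ in the relevant range, yielding two‑sided comparisons $M\,e^{-c_2 At}\ll Z(2\sigma+t)\ll M\,e^{-c_1 At}$.

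Feeding these bounds back into the integral representation then gives
\[ S_\varphi(m,n) \asymp M\int_0^\infty t\, e^{-(L+A)t}\,dt = \frac{M}{(A+L)^2}, \]
which is the assertion (the $\asymp$ allowing the $(1+o(1))$ in $A$). For the upper bound I would use $Z(2\sigma+t)\ll M e^{-c_1 At}$ on a finite window and the trivial bound $Z\le M$ (or a tail estimate) beyond it. For the lower bound I would restrict the integral to $t\asymp 1/(A+L)$, where $t\,e^{-Lt}$ and $Z(2\sigma+t)\gg M$ are simultaneously of the expected size; equivalently, in sum form, one restricts to the $y$‑smooth $k$ whose $\log k$ lies in the bulk range $\log k \asymp A$, which by the sharp concentration of the measure $k^{-2\sigma}$ (its relative width tends to $0$ as $\pi(y)\to\infty$) carries a fixed proportion of the total mass $M$.

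The main obstacle is the second step: pinning the effective scale to the \emph{explicit} quantity $A=(1+o(1))(1-2\sigma)^{-1}y^{1-2\sigma}$. The logarithmic mean $\Lambda(2\sigma)=\sum_{p\le y}\frac{\log p}{p^{2\sigma}-1}$ agrees with $A$ only when $\sigma$ is not too small relative to $1/\log y$ (otherwise the higher prime powers inflate $\Lambda$ beyond $A$), so the delicate point is to show, under the stated hypothesis, that $\Lambda(2\sigma)\asymp A$ and that the exponential approximation of $Z(2\sigma+t)$ persists over the full range of $t$ contributing to the integral. Controlling this prime sum uniformly, including the higher‑power tail and the PNT remainder, is the technical heart of the argument; once the scale is correctly identified, the passage through the integral representation is routine.
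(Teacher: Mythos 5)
Your integral representation is correct and is in substance the same identity the paper derives: the paper reaches $S_\varphi \sim 2\sigma\int_0^\infty \zeta(t+2\sigma,y)(t+2\sigma)^{-1} t e^{-at}\,dt$ by Abel summation, Mellin inversion and a residue computation, and your one-line Laplace identity $(\log k+L)^{-2}=\int_0^\infty te^{-(\log k+L)t}\,dt$ gets there more directly. The rest of your plan (two-sided exponential bounds on $Z(2\sigma+t)/Z(2\sigma)$ with rate $A$, then integrating) also mirrors the paper's. The gap is exactly the step you single out as the technical heart, and it is not merely unproven: the identification of the effective scale with $A=(1+o(1))(1-2\sigma)^{-1}y^{1-2\sigma}$ fails in the regime the hypothesis permits. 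The hypothesis only gives $\sigma\ge y^{-\alpha}$, and in the downstream application (Theorem~\ref{thm:noncompact}) one takes $\sigma=y^{-\alpha}$ exactly, so that $2\sigma\log p\le 2y^{-\alpha}\log y\to 0$ for every $p\le y$. Then every term of $\Lambda(2\sigma)=\sum_{p\le y}\frac{\log p}{p^{2\sigma}-1}$ equals $(1+o(1))/(2\sigma)$, whence $\Lambda(2\sigma)\sim \pi(y)/(2\sigma)\sim y^{1+\alpha}/(2\log y)$, which exceeds $A\sim y^{1-2\sigma}\sim y$ by the unbounded factor $y^{\alpha}/(2\log y)$: the higher prime powers ($j\ge 2$ in your expansion) are not a tail dominated by $j=1$; they dominate it. The condition that actually yields $\Lambda(2\sigma)\sim A$ is $\sigma\log y\to\infty$, which $\sigma\ge y^{-\alpha}$ does not supply.

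Concretely, the lower bound $Z(2\sigma+t)\gg M e^{-c_2 At}$ that your argument needs on a window of length $\asymp 1/(A+L)\le 1/A$ is false well inside that window's natural extension: at $t=2\sigma=2y^{-\alpha}$ one has exactly $Z(4\sigma)/Z(2\sigma)=\prod_{p\le y}(1+p^{-2\sigma})^{-1}=2^{-(1+o(1))\pi(y)}$, while $e^{-c_2 At}=e^{-O(y^{1-\alpha})}$ and $\pi(y)\log 2\gg y^{1-\alpha}$ for any fixed $\alpha>0$; more to the point, the mass of $\sum_{P^+(k)\le y}k^{-2\sigma}$ concentrates at $\log k\asymp \pi(y)/(2\sigma)$, not at $\log k\asymp y^{1-2\sigma}$, so the asserted lower bound $S_\varphi\gg M/(A+L)^2$ cannot be reached along your route. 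You should be aware that the paper's own proof rests on the very same estimate (namely $\zeta(t+2\sigma,y)/\zeta(2\sigma,y)\ge\exp(-Ct\sum_{p\le y}p^{-2\sigma}\log p)$ with $C=1+o(1)$ for $t\le 2y^{-\alpha}$), so you have correctly located the crux of the lemma; but neither your sketch nor a transcription of the paper's argument closes it when $\sigma$ is as small as $y^{-\alpha}$. A correct version requires either strengthening the hypothesis to $\sigma\log y\to\infty$ (which changes the application) or replacing $(1-2\sigma)^{-1}y^{1-2\sigma}$ by $\sum_{p\le y}(\log p)/(p^{2\sigma}-1)$ in the statement and rerunning your argument with that scale.
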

\begin{proof}
	We may assume that $0<\sigma<1/2$. Observe first that $\|k_\sigma^y\|_{\mathcal{H}^2}^2 = \zeta(2\sigma,y)$. For simplicity of notation, we write $a := \log\frac{mn}{(m,n)}$. By Abel summation, we see that  
	\[ S_\varphi(m,n) \sim 2\sigma \int_1^\infty \frac{\Psi(x,y)\,x^{-2\sigma}}{\left(\log x+a\right)^2}\,\frac{dx}{x}, \]
	where as usual $\Psi(x,y)$ denotes the number of $y$-smooth integers less than or equal to $x$. Observe that $\zeta(s,y)$ is the Mellin transform of $\Psi(x,y)$,
	\[\zeta(s,y) = s\int_0^\infty x^{s-1} \Psi(x,y) \, dx.\]
	Hence by writing $\Psi(x,y)$ as the inverse Mellin transform of $\zeta(s,y)$, integrating over the vertical line $\mre s = \xi$ for some $0<\xi<2\sigma$, and then changing the order of integration, we obtain 
	\begin{align*}
		I := \int_1^\infty \frac{\Psi(x,y)\,x^{-2\sigma}}{\left(\log x+a\right)^2}\,\frac{dx}{x} &= \int_1^\infty \left(\frac{1}{2\pi i} \int_{\xi-i\infty}^{\xi+i\infty} \zeta(s,y) x^s \,\frac{ds}{s}\right) \frac{x^{-2\sigma}}{\left(\log x+a\right)^2}\,\frac{dx}{x} \\
		&= \frac{1}{2\pi i} \int_{\xi-i\infty}^{\xi+i\infty} \zeta(s,y) \underbrace{\left(\int_1^\infty \frac{x^{s-2\sigma}}{\left(\log x+a\right)^2}\,\frac{dx}{x}\right)}_{J}\frac{ds}{s}. 
	\end{align*}
	By substituting $x = e^t$, using the identity
	\[ \frac{1}{(t+a)^2}=-\frac{d}{da}\int_{0}^\infty e^{-(t+a)x} dx, \] and interpreting the resulting integral as a Laplace transform, we find that
	\[J = \int_0^\infty \frac{e^{-t(2\sigma-s)}}{(t+a)^2}\,dt = -\frac{d}{da} \left(e^{2\sigma a}\int_{2\sigma}^\infty e^{-at}\mathcal{L}\left\{e^{s\cdot}\right\}(t)dt\right) = \int_{2\sigma}^\infty e^{-a(t-2\sigma)}(t-2\sigma)\frac{dt}{s-t}.\]
	Therefore, by changing the order of integration again, we obtain that
	\[I = \int_{2\sigma}^\infty e^{-a(t-2\sigma)}(t-2\sigma) \left(\frac{1}{2\pi i}\int_{\xi-i\infty}^{\xi+i\infty} \zeta(s,y)\frac{ds}{s(s-t)}\right)dt.\]
	We evaluate the inner integral by residues, capturing the simple pole in $s=t$, to see that
	\[I = \int_{2\sigma}^\infty e^{-a(t-2\sigma)}(t-2\sigma)\frac{\zeta(t,y)}{t}\,dt = \int_0^\infty \frac{\zeta(t+2\sigma,y)}{t+2\sigma}\,te^{-at}\,dt.\]
	Hence, to prove the statement of the lemma, we need to estimate
	\[\frac{2\sigma}{\zeta(2\sigma,y)}I = \frac{2\sigma}{\zeta(2\sigma,y)} \int_0^\infty \frac{\zeta(t+2\sigma,y)}{t+2\sigma}\,te^{-at}\,dt\]
	from below. Observe that
	\[ \frac{\zeta(t+2\sigma,y)}{\zeta(2\sigma,y)} \geq \exp\Bigg(-C t\sum_{p\leq y}p^{-2\sigma}\log p\Bigg) \geq \exp\left(-C(1-2\sigma)^{-1} t y^{1-2\sigma}\right) \]
	when, say, $t\leq 2y^{-\alpha}$. Here $1 < C = 1 + o(1)$. Assuming that $\sigma\geq y^{-\alpha}$, we have that $2\sigma/(t+2\sigma) \geq 1/2$, and we therefore obtain
	\begin{align*} \frac{2\sigma}{\zeta(2\sigma,y)} \int_0^\infty \frac{\zeta(t+2\sigma,y)}{t+2\sigma}\,te^{-at}\,dt
	& \gg \int_0^{2y^{-\alpha}} t \exp\left(-\left(a+C(1-2\sigma)^{-1}y^{1-2\sigma}\right)t \right)\,dt \\
	\ge  \frac{1}{2 \left(a+C(1-2\sigma)^{-1} y^{1-2\sigma} \right)^2} \end{align*}
	for sufficiently large $y$.
	On the other hand, the same type of estimates carried out in reverse order shows that 
	\begin{align*}
		\frac{2\sigma}{\zeta(2\sigma,y)} \int_0^\infty \frac{\zeta(t+2\sigma,y)}{t+2\sigma}\,te^{-at}\,dt &\ll \int_0^{\infty} t \exp\left(-\left(a+C'(1-2\sigma)^{-1}y^{1-2\sigma}\right)t \right)\,dt\\
		&= \frac{1}{\left(a+C'(1-2\sigma)^{-1} y^{1-2\sigma} \right)^2}, 
	\end{align*}
	where $1 > C' = 1 + o(1)$. 
\end{proof}

Applying \eqref{eq:multcomp} and Lemma~\ref{lem:smoothest} to a symbol of multiplicative type \eqref{eq:multsymb}, we find that 
\begin{equation}
	\label{eq:gksmooth} \frac{\big\|\widetilde{\mathbf{T}}_g k_\sigma^y\big\|_{\mathcal{H}^2}^2}{\big\|k_\sigma^y\big\|_{\mathcal{H}^2}^2} \asymp \sum_{P^+(m)\leq y} \sum_{P^+(n)\leq y}\psi(mn)\,\frac{(m,n)^{2\sigma}}{(mn)^\sigma}\left( \left(1+o(1)\right) (1-2\sigma)^{-1} y^{1-2\sigma} + \log\frac{mn}{(m,n)} \right)^{-2}. 
\end{equation}
under the assumptions on $y$ and $\sigma$ from Lemma~\ref{lem:smoothest}.
\begin{thm}
	\label{thm:noncompact} For $0 < \lambda < \infty$, let $g$ be the Dirichlet series \eqref{eq:multsymb}, where $\psi(n)$ is the completely multiplicative function defined on the primes by $\psi(p): = \lambda p^{-1}(\log p)$.  Fix $\alpha$, $0<\alpha<1$. If $\sigma = y^{-\alpha}$, then 
	\begin{equation}
		\label{eq:cpest} \frac{\big\|\widetilde{\mathbf{T}}_g k_\sigma^y\big\|_{\mathcal{H}^2}^2}{\big\|k_\sigma^y\big\|_{\mathcal{H}^2}^2} \gg y^{2(\lambda-1)}. 
	\end{equation}
	In particular, $\mathbf{T}_g$ is not compact when $\lambda=1$. 
\end{thm}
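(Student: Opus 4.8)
The plan is to start from the asymptotic identity \eqref{eq:gksmooth}, which reduces the claim to a lower bound for
\[ \Sigma := \sum_{P^+(m)\le y}\sum_{P^+(n)\le y}\psi(mn)\,\frac{(m,n)^{2\sigma}}{(mn)^\sigma}\left((1+o(1))(1-2\sigma)^{-1}y^{1-2\sigma}+\log\tfrac{mn}{(m,n)}\right)^{-2}. \]
Since $\sigma=y^{-\alpha}$ with $0<\alpha<1$, we have $(1-2\sigma)^{-1}\to1$ and $y^{1-2\sigma}=y\,e^{-2y^{-\alpha}\log y}\sim y$, so the leading term in the denominator is $\asymp y$. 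As every summand is nonnegative, I may bound $\Sigma$ from below by discarding terms. First I restrict to coprime pairs $(m,n)=1$, so that $\psi(mn)=\psi(m)\psi(n)$, $(m,n)^{2\sigma}(mn)^{-\sigma}=(mn)^{-\sigma}$, and $\log\frac{mn}{(m,n)}=\log(mn)$. Writing $\widetilde\psi(n):=\psi(n)n^{-\sigma}$, which is completely multiplicative with $\widetilde\psi(p)=\lambda p^{-1-\sigma}\log p$, and restricting further to $mn\le e^y$ so that the denominator is $\asymp y^2$, I obtain
\[ \Sigma\gg y^{-2}\sum_{\substack{(m,n)=1,\ mn\le e^y\\ P^+(m),P^+(n)\le y}}\widetilde\psi(m)\,\widetilde\psi(n). \]
It thus suffices to show that this constrained sum is $\gg y^{2\lambda}$.

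The heart of the matter is the Euler product estimate $\sum_{P^+(n)\le y}\widetilde\psi(n)=\prod_{p\le y}(1-\widetilde\psi(p))^{-1}\asymp y^{\lambda}$. This rests on the prime number theorem in the form $\sum_{p\le y}p^{-1-\sigma}\log p\sim\log y$ (partial summation from $\sum_{p\le y}\log p\sim y$, using $y^{-\sigma}\to1$), which gives $\log\prod_{p\le y}(1-\widetilde\psi(p))^{-1}=\lambda\log y+O(1)$; convergence of $\sum_p\widetilde\psi(p)^2$ absorbs the higher-order terms. To keep every Euler factor finite when $\lambda$ is large, I fix $p_0=p_0(\lambda)$ with $\lambda p^{-1}\log p\le1/2$ for $p>p_0$ and restrict all sums to integers whose prime factors lie in $(p_0,y]$, which costs only an $O(1)$ factor. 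Imposing coprimality replaces the local factor $(1-\widetilde\psi(p))^{-2}$ by $\frac{1+\widetilde\psi(p)}{1-\widetilde\psi(p)}$, and since $\prod_p(1-\widetilde\psi(p)^2)$ converges, the coprime sum is still $\asymp y^{2\lambda}$.

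It remains to check that the truncation $mn\le e^y$ preserves this main term. Here I would use Rankin's trick with $\beta=(2\log y)^{-1}$: since $p^{\beta}\le\sqrt e$ for $p\le y$ keeps every Euler factor bounded, the sums over integers with prime factors in $(p_0,y]$ satisfy
\[ \sum_{mn>e^y}\widetilde\psi(m)\widetilde\psi(n)\le e^{-\beta y}\Bigg(\sum_{n}\widetilde\psi(n)\,n^{\beta}\Bigg)^2\le e^{-y/(2\log y)}\,y^{O(\lambda)}=o(y^{2\lambda}). \]
Subtracting this negligible tail from the coprime sum leaves $\gg y^{2\lambda}$, whence $\Sigma\gg y^{2\lambda-2}$, proving \eqref{eq:cpest}. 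I expect this tail/truncation control, together with pinning down the Euler product asymptotics uniformly in the cutoff $\sigma=y^{-\alpha}$, to be the main technical obstacle.

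Finally, the non-compactness of $\mathbf{T}_g$ for $\lambda=1$ follows by testing against weak-null unit vectors. Put $u_y:=k_\sigma^y/\|k_\sigma^y\|_{\mathcal{H}^2}$; since $\langle k_\sigma^y,n^{-s}\rangle=n^{-\sigma}$ while $\|k_\sigma^y\|_{\mathcal{H}^2}^2=\zeta(2\sigma,y)\to\infty$ as $y\to\infty$ (with $\sigma=y^{-\alpha}\to0$), we get $\langle u_y,n^{-s}\rangle\to0$ for each $n$, so the $u_y$ tend weakly to $0$ along the orthonormal basis $\{n^{-s}\}$. The estimate \eqref{eq:cpest} with $\lambda=1$ gives $\|\widetilde{\mathbf{T}}_g u_y\|_{\mathcal{H}^2}\gg1$, so $\widetilde{\mathbf{T}}_g$ cannot be compact; as $\mathbf{T}_g-\widetilde{\mathbf{T}}_g$ is compact, neither is $\mathbf{T}_g$.
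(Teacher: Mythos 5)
Your argument is correct, but it takes a noticeably more involved route than the paper's at the combinatorial stage. After \eqref{eq:gksmooth} you restrict to \emph{coprime} pairs so that the double sum factors as an Euler product with local factor $\frac{1+\widetilde\psi(p)}{1-\widetilde\psi(p)}$, and you then need two extra devices: a cutoff $p_0(\lambda)$ to keep the factors $(1-\widetilde\psi(p))^{-1}$ finite for large $\lambda$, and a Rankin-trick truncation to $mn\le e^y$ to control the $\log\frac{mn}{(m,n)}$ term in the denominator. The paper instead restricts to \emph{square-free} $y$-smooth $m,n$ (and uses $(m,n)^{2\sigma}\ge 1$ to discard the gcd factor). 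This is cheaper on both counts: since $\psi$ is completely multiplicative, $\psi(mn)=\psi(m)\psi(n)$ holds for \emph{all} pairs, so the double sum splits into a product of two single sums with no coprimality needed; and square-freeness forces $\log m,\log n\le\sum_{p\le y}\log p=(1+o(1))y$ automatically, so the denominator is $\asymp y^2$ with no truncation or tail estimate. The resulting single sum is the finite product $\prod_{p\le y}\bigl(1+\psi(p)p^{-\sigma}\bigr)\asymp\exp\bigl(\lambda y^{-\sigma}\sum_{p\le y}p^{-1}\log p\bigr)\gg y^{\lambda}$ by Mertens, with no convergence issues for any $\lambda$. What your version buys is essentially nothing extra here (both give $\gg y^{2(\lambda-1)}$), though the Rankin step is a reasonable general-purpose tool if one did not think of exploiting square-freeness; be sure, if you keep your route, to state explicitly that discarding the primes $p\le p_0$ only removes nonnegative terms and changes the Euler product by a constant depending on $\lambda$. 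Your weak-convergence argument for non-compactness matches the paper's.
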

\begin{proof}
	Let $\mu(n$) denote the M\"obius function, the only property of which we need is that $\mu(n)=0$ unless $n$ is square-free. Restricting the sums in \eqref{eq:gksmooth} to square-free numbers and using that $(m,n)^{2\sigma}\geq1$, we find that 
	\begin{equation}\label{eq:inter}
		\frac{\big\|\widetilde{\mathbf{T}}_g k_\sigma^y \big\|_{\mathcal{H}^2}^2}{\big\|k_\sigma^y\big\|_{\mathcal{H}^2}^2} \gg \sum_{P^+(m)\leq y \atop \mu(m)\neq0} \sum_{P^+(n)\leq y \atop \mu(n)\neq0}\frac{\psi(mn)}{(mn)^\sigma}\left( \left(1+o(1)\right) (1-2\sigma)^{-1} y^{1-2\sigma} + \log\frac{mn}{(m,n)} \right)^{-2}. \end{equation} Now using that $m$ and $n$ are $y$-smooth and square-free, so that both $\log m$ and $\log n$ are bounded by $\pi(y)\log y\leq (1+o(1))y$ by the prime number theorem, we obtain from \eqref{eq:inter} that 
	\begin{align*} \frac{\big\|\widetilde{\mathbf{T}}_g k_\sigma^y \big\|_{\mathcal{H}^2}^2}{\big\|k_\sigma^y\big\|_{\mathcal{H}^2}^2}&  \gg	 \frac{1}{y^2}\sum_{P^+(m)\leq y \atop \mu(m)\neq0} \sum_{P^+(n)\leq y \atop \mu(n)\neq0}\frac{\psi(mn)}{(mn)^\sigma} \\
	& = \frac{1}{y^2} \sum_{P^+(m)\leq y \atop \mu(m)\neq0} \frac{\psi(m)}{m^\sigma}\sum_{P^+(n)\leq y \atop \mu(n)\neq0}\frac{\psi(n)}{n^\sigma}= \Bigg(\frac{1}{y}\sum_{P^+(m)\leq y \atop \mu(m)\neq0} \frac{\psi(m)}{m^\sigma}\Bigg)^2. 
	\end{align*}
	We may now complete the proof of the estimate \eqref{eq:cpest} by the following computation:
	\[\sum_{P^+(m)\leq y \atop \mu(m)\neq0} \frac{\psi(m)}{m^\sigma} = \prod_{p\leq y }\Bigg(1+\frac{\psi(p)}{p^\sigma}\Bigg) \asymp \exp\Bigg(\sum_{p\leq y} \frac{\psi(p)}{p^\sigma}\Bigg) \geq \exp\left(\frac{\lambda}{y^\sigma} \sum_{p\leq y}\frac{\log p}{p} \right) \asymp \exp\left(\frac{\lambda}{y^{\sigma}} \log{y}\right).\]
	In the last step, we used Mertens's first theorem, which asserts that $\sum_{p\leq y}\frac{\log p}{p}-\log y$ is bounded in absolute value by 2. Now \eqref{eq:cpest} follows because $y^{-\sigma}\log y=\log y +o(1)$ when $y\to \infty$ by our choice of $\sigma$.
	
	Finally, let $\{\sigma_j\}_{j\geq1}$ and $\{y_j\}_{j\geq1}$ be sequences such that $\sigma_j \to 0$ and $y_j \to \infty$ as $j\to\infty$. Then for every Dirichlet polynomial $P$, we have that $\langle P,\, k^{y_j}_{\sigma_j}\rangle_{\mathcal{H}^2}$ converges as $j\to\infty$. On the other hand, we have that $\|k^{y_j}_{\sigma_j}\|_{\mathcal{H}^2} \to \infty$. Therefore $k^{y_j}_{\sigma_j}/\|k^{y_j}_{\sigma_j}\|_{\mathcal{H}^2}$ converges weakly to $0$ in $\mathcal{H}^2$. Hence, the estimate shows, for suitably chosen $\sigma_j$ and $y_j$, that $\mathbf{T}_g$ is not compact for $\lambda=1$. 
\end{proof}

\section*{Acknowledgements}
The authors are grateful to Alexandru Aleman and Fr\'{e}d\'{e}ric Bayart for helpful discussions and remarks. They would also like to express their gratitude to the anonymous referee for a very careful review of the the paper. 

\bibliographystyle{amsplain} 
\bibliography{volterra} 

\end{document}